\DeclareMathOperator{\A}{\mathbb{A}}
\DeclareMathOperator{\Z}{\mathbb{Z}}
\DeclareMathOperator{\Q}{\mathbb{Q}}
\DeclareMathOperator{\F}{\mathbb{F}}
\DeclareMathOperator{\G}{\mathbb{G}}
\renewcommand{\P}{\mathbb{P}}
\DeclareMathOperator{\X}{\mathcal{X}}
\DeclareMathOperator{\boks}{\overline{\square}}
\DeclareMathOperator{\logDM}{\mathbf{\textnormal{logDM}}^{\text{eff}}(k)}
\DeclareMathOperator{\Cor}{\textnormal{\textbf{Cor}}}
\DeclareMathOperator{\tr}{\operatorname{tr}}
\DeclareMathOperator{\MCor}{\underline{\textnormal{\textbf{M}}}\textnormal{\textbf{Cor}}}
\DeclareMathOperator{\PST}{\textnormal{\textbf{PST}}}
\DeclareMathOperator{\Shv}{\textnormal{\textbf{Shv}}}
\DeclareMathOperator{\RSC}{\textnormal{\textbf{RSC}}}
\DeclareMathOperator{\Nis}{\textnormal{Nis}}
\DeclareMathOperator{\et}{\textnormal{ét}}
\DeclareMathOperator{\HI}{\textnormal{\textbf{HI}}}
\DeclareMathOperator{\CI}{\textnormal{\textbf{CI}}}
\DeclareMathOperator{\MPST}{\underline{\textnormal{\textbf{M}}}\textnormal{\textbf{PST}}}
\DeclareMathOperator{\MNST}{\underline{\textnormal{\textbf{M}}}\textnormal{\textbf{NST}}}
\DeclareMathOperator{\Spec}{\textnormal{Spec}}
\DeclareMathOperator{\Ext}{\textnormal{Ext}}
\DeclareMathOperator{\fil}{\textnormal{fil}}
\DeclareMathOperator{\Coker}{\textnormal{Coker}}
\DeclareMathOperator{\Hom}{\textnormal{Hom}}
\renewcommand{\tilde}{\widetilde}
\renewcommand{\sup}{\text{supp }}
\renewcommand{\lim}{\text{lim }}
\declaretheoremstyle[headfont   = \bfseries\sffamily,
                     notefont   = \normalfont,
                     bodyfont   = \itshape,
                     spaceabove = 6pt,
                     spacebelow = 6pt]{plain}
\declaretheoremstyle[headfont   = \bfseries\sffamily,
                     notefont   = \normalfont,
                     spaceabove = 6pt,
                     spacebelow = 6pt]{definition}
\declaretheorem[style = plain, numberwithin = section]{theorem}
\declaretheorem[style = plain,      sibling = theorem]{corollary}
\declaretheorem[style = plain,      sibling = theorem]{proposition}
\declaretheorem[style = definition, sibling = theorem]{definition}
\declaretheorem[style = definition, sibling = theorem]{example}
\declaretheorem[style = remark,     sibling = theorem]{remark}
\declaretheorem[style = definition,     sibling = theorem]{question}
\numberwithin{equation}{section}
\title{Lectures on the Cohomology of Reciprocity Sheaves}
\author{Nikolai Opdan and Kay Rülling}
\date{}
\begin{document}
\maketitle

\begin{abstract}
These are the notes accompanying three lectures given by the second author at the Motivic Geometry program at CAS, which 
aim to give an introduction and an overview of some recent developments in the field of reciprocity sheaves.  
We begin by introducing the theory of reciprocity sheaves and the necessary background of modulus sheaves with transfers as developed by B. Kahn, H. Miyazaki, S. Saito, and T. Yamazaki. We then explain some basic examples of reciprocity sheaves with a special emphasis on Kähler differentials and the de Rham--Witt complex. After an overview of some fundamental results, we survey the recent work of F. Binda, S. Saito and the second author on the cohomology of reciprocity sheaves. In particular, we discuss a projective bundle formula, a blow-up formula, and a Gysin sequence, which generalizes work of Voevodsky on homotopy invariant sheaves with transfers. From this, pushforwards along projective morphisms
can be constructed, which give rise to an action of projective Chow correspondences on the cohomology of reciprocity sheaves. This generalizes several constructions which originally relied on Grothendieck duality for coherent sheaves and gives a motivic view towards these results. 

We then survey some applications which include the birational invariance of the cohomology of certain classes of reciprocity sheaves, many of which were not considered before. Finally, we outline some recent results which were not part of the lecture series.
\end{abstract}

\tableofcontents

\subsection*{Foreword}
These are the notes accompanying three lectures\footnote{The lecture series can be found on YouTube at \url{https://youtube.com/playlist?list=PLiG7qomWDYXMcU-CH9ahhfWVMwR214A64}} of the second author given in October 2020
at the Motivic Geometry program at CAS, 
which aim to give an introduction and an overview of some recent developments in the
field of reciprocity sheaves.  We stress that the focus of this lecture series, and the present notes,
is on the properties of reciprocity sheaves, on their cohomology, and on applications of the theory,
with a particular emphasis on de Rham-Witt sheaves.
We do not stress categorical constructions such as the triangulated category of motives with modulus and
we do not intend to give a complete overview of the whole theory, 
which was first and  foremost developed by B. Kahn, H. Miyazaki, S. Saito, and T. Yamazaki.
We try to keep the informal style of the lectures also in these notes and we do not claim any originality. 

\subsection*{Acknowledgements}
The authors greatly acknowledge the support by the research project  ``Motivic Geometry'' at the Centre for Advanced Study at the Norwegian Academy of Science and Letters in Oslo, Norway.
We are grateful to Shuji Saito and the referee for a careful reading and many helpful comments.

\section{Reciprocity sheaves}
By work of Voevodsky and many others, the general theory of the cohomology  of \(\A^1\)-invariant sheaves with transfers is fully developed. Among the most fundamental properties are 
the projective bundle formula, the blow-up formula, the Gysin sequence, Gersten resolution, action of proper Chow correspondences, representability of cohomology theories, etc...

However, the theory has a drawback: Many interesting non-\(\A^1\)-invariant sheaves share the same properties as above, such as Kähler differentials, smooth commutative unipotent group schemes, étale motivic cohomology with \(\Z/p^n\)-coefficients (in char \(p>0\)), etc. Despite this, they are not representable in the classical motivic theory and hence cannot be studied by motivic methods. 
This is in part because the \(\A^1\)-invariant theory only detects log poles, regular singularities and tame ramifications. In order to study more general theories, we need a more general theory than the classical theory provided by Voevodsky.

One approach was recently introduced by Binda--Park--Østvær in \cite{BPO}. The basic idea is to generalize the classical motivic homotopy theory by replacing  \(\A^1\) with the ``cube''
\[\boks := (\P^1, \infty).\]
It is the log scheme whose underlying scheme is \(\P^1\) and whose log structure is induced by the inclusion of the divisor \(\infty \hookrightarrow \P^1\). Working with log smooth log schemes and a suitable topology, the authors construct in {\em loc. cit.} the triangulated category \(\logDM\) of effective logarithmic motives. A cohomology theory representable in \(\logDM\) has the nice properties listed above (at least under the assumption of the existence  of resolutions of singularities). An example of such a theory is the sheaf of log-Kähler differentials which becomes representable in this new triangulated category. Embedding the classical triangulated category of motives \(\text{DM}^{\text{eff}}(k)\) fully faithfully in \(\logDM\), they construct an enlargement of the classical \(\A^1\)-invariant theory. So far there is however no pole order or ramification filtration on the sheaves in this category.

The theory of reciprocity sheaves (\cite{BSY}, \cite{KSY-RecII}) provides another solution: The basic idea, which goes back to B. Kahn in the 1990's, 
is to consider only those sheaves whose sections behave in a controlled way at infinity, i.e., replace \(\A^1\)-invariance by a \textit{modulus condition}. This is a similar condition to the one considered by Rosenlicht--Serre to define the generalized Jacobian for curves.

\subsection{Modulus à la Rosenlicht and Serre}\label{subsec:RoSe}
The definition of a modulus condition goes back to Rosenlicht  \cite{Rosenlicht} and Serre \cite[III,][]{Ser}. They considered the modulus of a rational map from a curve to a commutative algebraic group.
\begin{definition}
Let \(k\) be a perfect field, \(C\) a smooth projective curve over \(k\) with an effective divisor \(D\), \(U := C \setminus  D \) the complement of \(D\), and \(G\) a smooth commutative \(k\)-group. Then a $k$-morphism  \(a \colon U \to G\) has \textit{modulus} \(D\) if 
\[\sum_{x \in U} v_x(f) \cdot \text{Tr}_{x/k}(a(x))=0,\]
for all \(f \in k(C)^{\times}\) with \(f \equiv 1 \mod D\), i.e., 
$f\in \bigcap_{x\in D}{\rm Ker}( \mathcal{O}_{X,x}^\times\to \mathcal{O}_{D,x}^\times)$,
where $v_x$ denotes the discrete valuation defined by the point $x$ and 
$\text{Tr}_{x/k}:G(x)\to G(k)$ is the trace\footnote{If $G=\G_a$ is the additive group, then this is the usual trace, if
$G=\G_m$ is the multiplicative group, then  this is the norm.}, e.g. \cite[Exp XVII, App 2]{SGA4III}.
\end{definition}

The choice of a rational point $x\in U(k)$ gives a universal morphism called  the Albanese map
\[{alb}_{(C,D)}\colon U \to \text{Alb}(C, D),\]
with the property that any map \(a \colon U \to G\) to a smooth commutative group scheme \(G\), which satisfies the modulus condition, factors via ${\rm alb}_{(C,D)}$.

Let \({\rm Sm}\) denote the category of  smooth separated $k$-schemes of finite type, in the following
simply called  smooth $k$-schemes. For $X$, $Y\in {\rm Sm}$ denote by $\Cor(X,Y)$ the group of finite correspondences  from $X$ to $Y$ as introduced by Suslin-Voevodsky,  i.e., it is the free abelian group generated by integral closed subschemes of $X\times Y$, which are finite and surjective over a connected component of $X$. There is a category of finite correspondences $\Cor$ whose objects are the objects of ${\rm Sm}$ and with morphisms the finite correspondences. A {\em presheaf with transfers} is an additive contravariant functor from $\Cor$ to the category of abelian groups.
The  category of presheaves with transfers is denoted by $\PST$.

Using the trace construction alluded to above, any smooth commutative $k$-group admits a structure of a presheaf with transfers by means of which 
the modulus condition may be reformulated as follows:
\begin{definition}\label{def:RoSe-Cor}
An element \(a \in G(U)\) has \textit{modulus} \(D\) if \[\gamma^* a = 0,\]
for all prime correspondences \(\Gamma \in \Cor(\P^1 \setminus \{1\}, U)\) such that
\[\{1\}\vert_{\Gamma^N} \geq D\vert_{\Gamma^N},\]
where \(\Gamma^N \to \P^1 \times C\) is the normalization of the closure of \(\Gamma\), and
\[\gamma := i^*_0 \Gamma - i^*_{\infty}\Gamma \in \Cor(\Spec k, U).\]
\end{definition}

\subsection{Modulus pairs}
The framework of modulus presheaves with tranfers was introduced by Kahn-Miyazaki-Saito-Yamazaki in \cite{KMSY1} and \cite{KMSY2}, 
as a fundamental tool in the construction of their triangulated category of motives with modulus.
\begin{definition}[{\cite{KMSY1}}]
Fix a perfect field \(k\).
A \textit{modulus pair} \(\X\) is a pair \((X, D)\), where \(X\) is a separated scheme of finite type over \(k\) and \(D\) is an effective Cartier divisor (or the empty scheme) on \(X\) such that the complement of the support of \(D\) in \(X\) is smooth. 
A modulus pair \((X, D)\) is \textit{proper} if \(X\) is proper over \(\Spec k\).

The group of \textit{modulus correspondences} from \((X, D)\) to \((Y, E)\), denoted \(\MCor((X,D), (Y, E))\), is the subgroup of \(\Cor(X\setminus D, Y\setminus E)\) generated by finite prime correspondences
\[V \subset (X \setminus  D ) \times (Y \setminus E )\] 
such that
\begin{enumerate}[label=(\roman*)]
    \item\label{MPi} The projection \(V^N \to X\) is proper,
    \item\label{MPii} \(D\vert_{V^N} \geq E\vert_{V^N}\),
\end{enumerate}
where \(V^N \to X \times Y\) is the normalization of the closure of \(V\) in $X\times Y$. An element of 
\[\MCor((X,D), (Y, E))\]
is called a \textit{modulus correspondence} from \((X, D)\) to \((Y, E)\).

The composition of finite correspondences restricts to a composition of modulus correspondences\footnote{We remark that condition \ref{MPi} is essential for this, see \cite[Prop 1.2.3]{KMSY1}.}. 
Hence we can define the category \(\MCor\) as the \textit{category of modulus pairs}, i.e., objects are modulus pairs and morphisms are modulus correspondences.
\end{definition}

The category \(\MCor\) has a monoidal structure given by
\[(X, D) \otimes (Y, E) = (X \times Y, p^*_X D + p^*_Y E),\]
where \(p_X\) (resp. \(p_Y\)) denotes the projection onto the first (resp. second) factor.

Let \(\MPST\) denote the category of presheaves of abelian groups on \(\MCor\). It comes with a  monoidal structure \(\otimes_{\MPST}\) 
which via the Yoneda embedding extends the one on \(\MCor\).
There is an adjoint functor pair
\[\underline{\omega}_{!} : \MPST \rightleftarrows \PST : \underline{\omega}^*\]
such that
\(\underline{\omega}_!G(X)=G(X, \emptyset)\) and \(\underline{\omega}^*F(X, D)=F(X \setminus  D )\).

The modulus presheaf represented by a modulus pair \((X, D)\) in \(\MPST\) is denoted by
\[\Z_{\tr}(X, D):= \MCor(-, (X, D)).\]

Let \(\boks\) denote the modulus pair \((\P^1, \infty)\), and set
\[h_0^{\boks}(X, D) := \Coker\left(\Z_{\tr}(X, D)(- \otimes \boks) \overset{i^*_0 - i^*_1}{\longrightarrow}\Z_{\tr}(X, D)\right),\]
which we can consider as the \textit{cubical modulus} version of \(h_0\) of the Suslin complex. 

\begin{remark}\label{rmk:surj}
Note that we have surjections 
\[\Z_{\tr}(X\setminus  D  ) \twoheadrightarrow \underline{\omega}_! h_0^{\boks}(X, D) 
\twoheadrightarrow h_0^{\A^1}(X \setminus  D ),\]
where $\Z_{\tr}(X\setminus  D  )=\Cor(-, X\setminus D)$ and 
\[h_0^{\A^1}(X \setminus  D )=
\Coker \left(\Z_{\tr}(X\setminus  D  )(-\otimes \A^1)
\overset{i^*_0-i^*_1}{\longrightarrow} Z_{\tr}(X\setminus  D)\right)\]
is its maximal $\A^1$-invariant quotient.

Indeed, the surjectivity of the first  map follows from the right exactness of $\underline{\omega}_!$  and the equality
\[\underline{\omega}_!\Z_{\tr}(X,D)=\Z_{\tr}(X\setminus D).\]
To see this, observe that if $V\in \Cor(S, X\setminus D)$ is a finite prime correspondence,
then  $V$ is already closed in \(S \times X\)
since it is finite over \(S\). Hence $V\in\MCor((S,\emptyset), (X,D))$ . 
The second surjection follows directly from the fact that \(\Z_{\tr}(X, D)(- \otimes \boks)\) is a subpresheaf of 
\(\Z_{\tr}(X\setminus D)(- \otimes \A^1)\).

We remark that by the above we have for $S\in {\rm Sm}$
\[\underline{\omega}_! h_0^{\boks}(X, D)(S)=\Coker\left(
\MCor((S,\emptyset)\otimes \boks, (X,D))\overset{i_0^*-i_1^*}{\longrightarrow} 
\Cor(S, X\setminus D)\right).\]
\end{remark}

\begin{definition}[{\cite[Def 2.2.4]{KSY-RecII}}]\label{def:ModulusCondition}
Let \((X, D)\) be a proper modulus pair with \(U:= X \setminus  D\), \(F\) a presheaf with transfers, and 
\(a \in F(U)\) a  section. We say that \(a\) has \textit{modulus} \((X, D)\) if 
the Yoneda map defined by \(a\) factors through \(\underline{\omega}_! h_0^{\boks}(X, D)\), i.e., there exists a map that makes the following diagram commute
\begin{center}
    \begin{tikzcd}
    \Z_{\tr}(U) \arrow[rr, "a" above, "\text{Yoneda}" below] \arrow[rd, two heads]&& F\\
    &\underline{\omega}_! h_0^{\boks}(X, D) \arrow[ur, dotted].
    \end{tikzcd}
\end{center}
\end{definition}

\begin{remark}
\begin{enumerate}
    \item In \cite{KSY-RecII}  the pair $(X,D)$ above is called an SC-modulus of $a$ in order to distinguish it from a slightly different
    notion of modulus which was introduced before in \cite{BSY}. In \cite[Thm 3.2.1]{KSY-RecII} it is proven, that the two 
    notions of modulus coincide as long as  $X\setminus D$ is quasi-affine.
    In the following we will only work with the above definition of modulus and therefore simply say {\em modulus} instead of {\em SC-modulus}.
    \item If $F=G$ is a smooth commutative $k$-group and  $X$ is a smooth projective curve, then evaluating the diagram above at \(k\) 
    gives back the definition of modulus  introduced by Rosenlicht-Serre which was reformulated in \Cref{def:RoSe-Cor}.
\end{enumerate}
\end{remark}

\subsection{Reciprocity sheaves}
We are now in the position to define reciprocity sheaves.
\begin{definition}[{\cite[Def 2.2.4]{KSY-RecII}}]\label{def:RSC}
We say that a presheaf with transfers \(F\) is a \textit{reciprocity presheaf} if for any smooth $k$-scheme \(U\), and for all \(a \in F(U)\), there exists a proper modulus pair \((X, D)\) such that $U=X\setminus D$ and \(a\) has modulus \((X, D)\). 

We let \(\RSC\) denote the full subcategory of \(\PST\) consisting of reciprocity presheaves. 
The category of \textit{reciprocity sheaves} is
\(\RSC_{\Nis}:=\RSC \cap \textbf{NST}\), where $\textbf{NST}$ denotes the subcategory of $\PST$ consisting of presheaves with transfers which are Nisnevich sheaves on ${\rm Sm}$.  
\end{definition}

\begin{remark}
\begin{enumerate}
    \item In {\em loc. cit.} the term {\em presheaves with transfers with SC-reciprocity} is used for what above is called reciprocity presheaf
in order to distinguish them from the reciprocity presheaves introduced in \cite{BSY}. 
This difference is however not relevant for us since we  mostly work with Nisnevich sheaves with transfers, for which the two notions coincide, see \cite[Cor 3.2.3]{KSY-RecII}.
\item A precursor of reciprocity sheaves are the reciprocity functors  defined in \cite{IR}.
These are defined only on function fields and regular curves over such. 
It follows from  the injectivity Theorem \cite[Thm 6]{BSY} that the restriction of any 
reciprocity sheaf in the sense of Definition \ref{def:RSC} 
to fields and regular curves defines a reciprocity functor, see \cite[Thm 5.7]{RSY}.
\end{enumerate}

\end{remark}

\begin{example}
Smooth commutative \(k\)-groups and homotopy invariant Nisnevich sheaves provide important examples of reciprocity sheaves. For the homotopy invariant sheaves this follows directly from Remark \ref{rmk:surj};
for the smooth commutative groups  the argument is essentially given by Rosenlicht-Serre\footnote{The argument of Rosenlicht-Serre works for curves over an algebraically closed field, see \cite[Thm 4.1.1]{BSY} for an extension of the argument to the general case.},
see \cite[Chap III, Thm 1]{Ser}.
\end{example}

\begin{proposition}\label{prop:KahlerReciprocity}
The (absolute) Kähler differentials are reciprocity sheaves, i.e., 
\[\Omega^j \in \RSC_{\Nis}, \quad \text{for all } j \geq 0.\]

\end{proposition}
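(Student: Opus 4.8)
The plan is to verify the two conditions defining a reciprocity sheaf separately: that $\Omega^j$ carries the structure of a Nisnevich sheaf with transfers, and that every section admits a modulus. As a presheaf $U \mapsto \Gamma(U, \Omega^j_{U/\Z})$ the Kähler differentials form a quasi-coherent, hence Zariski and \emph{a fortiori} Nisnevich, sheaf; and when $U$ is smooth over the perfect field $k$ the module $\Omega^j_{U/\Z}$ is locally free, so that $\Omega^j(U)$ injects into $\Omega^j_{k(U)}$ at the generic points. The transfer structure making $\Omega^j$ an object of $\PST$ is the more delicate point: it is built from the trace map on differential forms along finite (generically étale) morphisms, and the verification that these traces are compatible with composition of finite correspondences is exactly the content of the transfer formalism for Hodge and de Rham--Witt sheaves. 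Granting this as known, $\Omega^j \in \mathbf{NST}$, and it remains to produce a modulus.

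Given a smooth $k$-scheme $U$ and a section $\omega \in \Omega^j(U)$, I would first choose a compactification: since $k$ is perfect, there is a smooth proper $X$ containing $U$ as a dense open with $D_0 := (X\setminus U)_{\mathrm{red}}$ a normal crossing divisor. Viewed as a rational $j$-form on $X$, the section $\omega$ has poles of some bounded order along $D_0$; I would then set $D := (m+1)D_0$ for $m$ large enough (a bound to be made precise during the residue computation below) and claim that $(X,D)$ is a proper modulus pair with $U=X\setminus D$ for which $\omega$ has modulus.

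The verification of the modulus condition is the heart. Using the explicit description of $\underline{\omega}_! h_0^{\boks}(X,D)$ from \Cref{rmk:surj}, the factorization required by \Cref{def:ModulusCondition} amounts to the identity $(i_0^*\Gamma)^*\omega = (i_1^*\Gamma)^*\omega$ in $\Omega^j(S)$ for every $S \in \mathrm{Sm}$ and every $\Gamma \in \MCor((S,\emptyset)\otimes\boks, (X,D))$. Since $\Omega^j$ injects into its generic fibers, I would reduce to $S = \Spec K$ for $K$ a field, base change everything to $K$, and let $C := \Gamma^N$ be the normalization of the closure of $\Gamma$ in $\P^1_K \times X$, a curve with finite maps $\pi\colon C \to \P^1$ (coordinate $t$) and $q\colon C \to X$. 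The trace along $\pi$ of the rational form $q^*\omega$ produces a rational $j$-form on $\P^1_K$, which I would write as $\alpha + \beta\wedge dt$ with $\alpha \in \Omega^j_K \otimes_K K(t)$; then $(i_0^*\Gamma)^*\omega = \alpha(0)$ and $(i_1^*\Gamma)^*\omega = \alpha(1)$, so the desired identity becomes $\alpha(0) = \alpha(1)$. Finally, exactly as in the Rosenlicht--Serre computation recalled in \Cref{def:RoSe-Cor}, I would express $\alpha(0) - \alpha(1)$ through the residues of the $\Omega^j_K$-valued $1$-form $\alpha \, d\log\!\big(t/(t-1)\big)$ and invoke the residue theorem: the residues at $t=0,1$ recover $\alpha(0)-\alpha(1)$, the modulus inequality $D\vert_{C} \le \infty\vert_C$ forces the contributions over $t=\infty$ (where $q$ maps $C$ into $D$) to vanish, and regularity of the trace of $q^*\omega$ over finite $t$ (where $q$ lands in $U$) kills the remaining residues.

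The step I expect to be the main obstacle is precisely this last residue analysis, in two respects. In positive characteristic the trace of differential forms no longer preserves regularity under wild ramification, so showing that the residues over $t=\infty$ genuinely vanish requires the modulus multiplicity $m$ to dominate the relevant conductor, and proving that such a finite bound exists is the non-tame input distinguishing this from the $\A^1$-invariant theory. The second, more foundational, obstacle is the compatibility of the differential-form traces with composition of correspondences, needed already to make sense of $\Gamma^*\omega$ and its transfers; this is the technical backbone I would import rather than reprove.
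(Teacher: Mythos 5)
Your skeleton---import the transfer structure from the trace formalism of Chatzistamatiou--R\"ulling, reduce the modulus condition to function fields via the injectivity of $\Omega^j(S)\to\Omega^j_{k(S)}$, and conclude with a residue-theorem computation \`a la Rosenlicht--Serre---is the same as the paper's. The genuine gap is in \emph{where} you run the residue theorem. You push $q^*\omega$ down to $\P^1_K$ along $\pi$ and then need (i) the traced form $\alpha$ to be regular at $t=0,1$ and over finite $t$, and (ii) the residue at $t=\infty$ to vanish; you correctly flag (ii) as requiring the multiplicity $m$ to ``dominate the relevant conductor''. But that can never be arranged: the modulus pair $(X,D)$ must be fixed \emph{before} the test correspondence $\Gamma$ is quantified, and as $\Gamma$ varies the projections $\pi\colon C\to\P^1_K$ have unbounded wild ramification over $t=\infty$, so their differents are dominated by no finite $m$. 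The obstacle you name is therefore not a missing technical bound but a structural failure of the push-down formulation: pole orders upstairs control the trace downstairs only through the different, which is not controlled by the modulus condition.

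The fix---and this is what the paper does---is to never trace along $\pi$ at all. Bound the poles of $a$ by some $D$ on a proper modulus pair, i.e.\ $a\in H^0(X,\Omega^j_X\otimes_{\mathcal{O}_X}\mathcal{O}_X(D))$, declare the modulus to be $(X,2D)$, and apply the residue (reciprocity) theorem on the proper curve $C$ over $K$ itself, to the form $a_{|C}\,\mathrm{dlog}(f)$, where $f\in K(C)^\times$ is the test function with $f\equiv 1\bmod 2D_C$. At $x\in D_C$ one has $v_x(a_{|C})\ge -v_x(D_C)$ while $v_x(\mathrm{dlog} f)\ge 2v_x(D_C)-1\ge v_x(D_C)$, so the form is regular there and the residue vanishes; at $x\in C\setminus D_C$ the residue equals $v_x(f)\,\tr_{K(x)/K}(a(x))$, and summing over $C$ gives $\mathrm{div}_C(f)^*a=0$. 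No conductor or different enters, because the only traces used are those on residue fields $K(x)/K$ built into the residue symbol, and the factor $2$ works uniformly for every test curve. (If you insist on your push-down structure, replace ``regularity of the trace plus evaluation'' by the trace--residue compatibility $\mathrm{Res}_t\circ\tr_\pi=\sum_{x\mid t}\mathrm{Res}_x$, which reduces you to exactly this upstairs computation.) A secondary, fixable point: your compactification with smooth total space and normal crossing boundary invokes resolution of singularities, unavailable in positive characteristic; the paper needs only a proper modulus pair---the total space may be singular along the boundary---which exists by Nagata compactification plus coherence of $\Omega^j$.
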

\begin{proof}
    As a corollary of \cite[Thm 3.1.8]{CR11} Kähler differentials have an action of finite transfers. 
    It remains to show that they satisfy the modulus condition in \cref{def:ModulusCondition}. 
    
    For any form \(a \in \Omega^j(U)\) there is a proper modulus pair \((X, D)\) such that 
    \(a \in H^0(X, \Omega^j_{X} \otimes_{\mathcal{O}_{X}} \mathcal{O}_{X}(D))\). 
    We claim that  $(X,2D)$ is a modulus for $a$ in the sense of Definition \ref{def:ModulusCondition}.
    For an integral smooth $k$-scheme $S$ the restriction $\Omega^j_S\to \Omega^j_{k(S)}$ is injective, hence we reduce to show the following:
    If $C$ is a regular projective curve \(C\)  over a $k$-function field \(K\) which comes with a map to  \(X\), such that its image is not contained in $D$, 
    and  \(f \in K(C)\) satisfies the condition \(f \equiv 1 \mod 2D_C\), where $D_C$ denotes the pullback of $D$ to $C$,
    then we have to show 
    \begin{equation}\label{prop:KahlerReciprocity1}
    {\rm div}_C(f)^*a=0.
    \end{equation}
    To this end, observe that the modulus condition for $f$ and the choice of $(X,D)$ imply
    \[\text{Res}_x(a_{|C} \text{ dlog}(f)) = 0,\]
    for all \(x \in  D_{ C} \), where $\text{Res}_x:\Omega^{j+1}_{K(C)}\to \Omega^j_K$ denotes the residue symbol.
    Since the pullback of $a$ to $C \setminus D_{C}$ is regular, we have 
    \[\text{Res}_x(a_{|C} \text{ dlog}(f))= v_x(f) \tr_{K(x)/K}(a(x)),\quad \text{for all }x\in C \setminus D_{ C},\]
    with $\tr_{K(x)/K}: \Omega^j_{K(x)}\to \Omega^j_K$ the trace.
    Thus the reciprocity law yields
    \[0 = \sum_{x \in C} \text{Res}_x(a \text{ dlog}(f))= \sum_{x\in C \setminus D_{ C} }v_x(f) \tr_{K(x)/K}(a(x)),\]
    which is a reformulation of  \eqref{prop:KahlerReciprocity1}, cf. subsection \ref{subsec:RoSe}.
    
\end{proof}

\begin{definition}
For a reciprocity presheaf \(F \in \RSC\) we form the \textit{modulus presheaf} \(\tilde{F}\) by defining
\[\tilde{F}(X, D) := \{a \in F(X \setminus  D) : a \text{ has modulus } (\overline{X}, \overline{D}+N \cdot B), \text{ for some } N \gg 0\},\]
where \((\overline{X}, \overline{D}+  B)\) is a compactification of \((X, D)\), in the sense of \cite[Def 1.8.1]{KMSY1}, i.e.,
it is a proper modulus pair with $X=\overline{X}\setminus B$ and $D= \overline{D}_{|X}$.
This definition is independent of the choice of the compactification, by e.g., \cite[Rmk 1.5]{Sai}.
\end{definition}

We have  \(\tilde{F} \in \MPST\) and  it satisfies 
\begin{itemize}
    \item \(\boks\)-\textit{invariance}: \(\tilde{F}(\X \otimes \boks) \cong \tilde{F}(\X)\),
    \item M-\textit{reciprocity}: \(\tilde{F}(X, D) = \varinjlim_{N} \tilde{F}(\overline{X}, \overline{D} + N \cdot B)\),
    \item \textit{Semi-purity}: \(\tilde{F}(X, D) \subset \tilde{F}(X \setminus  D , \emptyset)\).
\end{itemize}
In fact,  $\tilde{F}$ is a $\boks$-invariant modulus presheaf by \cite[Prop 2.3.7]{KSY-RecII} and \cite[Prop 2.4.1]{KSY-RecII},
and the other two properties, follow directly from the definition.

\begin{definition}
We define \(\CI^\tau\) as the full subcategory of \(\MPST\) consisting of presheaves satisfying cube-invariance and M-reciprocity.
We let \(\CI^{\tau, sp}\) denote the full subcategory of \(\CI^\tau\) consisting of semi-pure presheaves.
\end{definition}

This gives an adjoint functor pair (cf. \cite[Prop 2.3.7]{KSY-RecII})
\[\underline{\omega}_! : \CI^{\tau, {\rm sp}} \rightleftarrows \RSC : \underline{\omega}^{\CI},\]
where \(\underline{\omega}^{\CI}(F) = \tilde{F} \in \CI^{\tau, sp}\).
For a  proper modulus pair \(\X\) we obtain \(\underline{\omega}_!h_0^{\boks}(\X) \in \RSC\),
see \cite[Cor 2.3.5]{KSY-RecII}.

\subsection{Modulus sheaves}
\begin{definition}\label{defn:mod-sheaf}
A presheaf \(G \in \MPST\) is a \textit{modulus sheaf} if for all modulus pairs \(\X = (X, D)\)
the presheaf on  the category of \'etale $X$-schemes
\[(U \overset{\et}{\to} X) \mapsto G(U, D_{\vert U}) =: G_{\X}(U)\]
is a Nisnevich sheaf.

We let \(\MNST\) denote the category of Nisnevich modulus sheaves. Note that \(\underline{\omega}_!\) restricts to a functor \(\MNST \to \textbf{NST}\) which we also denote by \(\underline{\omega}_!\).
\end{definition}

\begin{remark}
Note that \(\MNST\) is not the category of sheaves on a site whose underlying category is $\MCor$.
However, there is a subcategory of $\MCor$ which underlies a site associated to a regular and complete
cd-structure, such that \(G \in \MPST\) is a sheaf in the above sense if and only if the restriction 
of $G$ is a sheaf on this site, see \cite[Prop 3.2.3]{KMSY1}.
\end{remark}

\begin{theorem}[\cite{KMSY1},\,\cite{KMSY2}]
The natural inclusion $\MNST\to \MPST$ admits an exact left adjoint, the so-called sheafification,
\[\underline{a}_{\Nis} \colon \MPST \to \MNST,\]
which sends  presheaves with M-reciprocity to sheaves with M-reciprocity.
\end{theorem}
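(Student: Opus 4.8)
The plan is to prove this sheafification result by transporting the problem to a genuine site. As the remark immediately preceding the theorem indicates, the key structural input is the existence of a subcategory of $\MCor$ (or rather an associated category of pairs $(\X, U \overset{\et}{\to} X)$) carrying a \emph{regular and complete cd-structure} in the sense of Voevodsky, via \cite[Prop 3.2.3]{KMSY1}. First I would make precise that $\MNST$ coincides with the category of sheaves for the topology generated by this cd-structure: a presheaf $G \in \MPST$ lies in $\MNST$ exactly when, for every modulus pair $\X$, the induced presheaf $G_{\X}$ on étale $X$-schemes is a Nisnevich sheaf, and this condition is equivalent to sending every distinguished square of the cd-structure to a pullback (equivalently, a Mayer--Vietoris) square. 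Once this identification is in hand, the existence of a sheafification functor is a formal consequence of the general theory of sheaves with respect to a topology: the inclusion of sheaves into presheaves always admits a left adjoint.

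The second, and more delicate, point is that this sheafification can be performed \emph{within} $\MPST$, i.e.\ compatibly with the transfer structure, and that it is \emph{exact}. For exactness I would invoke Voevodsky's theorem that for a regular and complete (bounded) cd-structure the associated sheafification functor is exact, because higher cohomology can be computed by a Čech/Mayer--Vietoris procedure that commutes with filtered colimits and the square-by-square nature of the descent condition forces $\underline{a}_{\Nis}$ to be left exact in addition to the automatic right exactness of a left adjoint. Concretely, one checks that the naive sheafification on each slice $\mathrm{\acute{e}t}/X$ (ordinary Nisnevich sheafification) assembles into a functor on $\MPST$ respecting transfers; the compatibility with transfers is where one must work, and I expect to reduce it to the corresponding statement for $\PST$ via $\underline{\omega}_!$ and $\underline{\omega}^*$, together with the fact that modulus correspondences compose and restrict well under étale localization.

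The final clause---that $\underline{a}_{\Nis}$ sends presheaves with M-reciprocity to sheaves with M-reciprocity---is the part I would treat last and expect to be the genuine obstacle. The issue is that M-reciprocity, $\tilde{F}(X,D) = \varinjlim_N \tilde{F}(\overline{X}, \overline{D} + N\cdot B)$, is a colimit condition along the ``increase the divisor at infinity'' maps, and one must verify it is preserved under sheafification. The strategy is to observe that sheafification, being computed by a colimit construction that commutes with the filtered colimit defining M-reciprocity, interacts well with this structure; more precisely, one shows the sheafification of $G$ can be computed compatibly across the tower $\{(\overline{X}, \overline{D}+N\cdot B)\}_N$, so that $\varinjlim_N$ and $\underline{a}_{\Nis}$ commute. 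The technical heart is ensuring that the étale-local sheafification on a fixed total space $\overline{X}$ does not disturb the colimit over $N$, which relies on the fact that the divisors $\overline{D}+N\cdot B$ all have the same support and hence define the same underlying étale topology on $\overline{X}$; this is precisely the kind of verification carried out in \cite{KMSY2}, to which I would defer for the detailed bookkeeping.
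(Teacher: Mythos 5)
Your first step is already where the argument breaks down. The theorem asserts a left adjoint to the inclusion $\MNST \to \MPST$, i.e.\ a sheafification \emph{inside presheaves with transfers on $\MCor$}, and this is not a formal consequence of the general theory of Grothendieck topologies. The remark you cite warns against exactly this: $\MNST$ is \emph{not} the category of sheaves on a site whose underlying category is $\MCor$. The cd-structure of \cite[Prop 3.2.3]{KMSY1} lives on a subcategory whose morphisms are (certain) ambient scheme morphisms, not general modulus correspondences; formal sheafification on that site produces a sheaf on that site, but gives neither a transfer structure nor functoriality on all of $\MCor$. Worse, your ``concrete'' candidate --- slice-wise Nisnevich sheafification $(X,D)\mapsto (G_{(X,D)})_{\Nis}$ --- is not even a well-defined object of $\MPST$. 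If $f\colon Y\to X$ is proper and an isomorphism outside $D$, then the graph of $f$ and its transpose are both modulus correspondences (on the normalization of the closure, both $D$ and $f^*D$ pull back to the same divisor, so condition (ii) holds with equality, and properness is inherited from $f$), and they are mutually inverse; hence $(Y,f^*D)\cong (X,D)$ in $\MCor$, so any presheaf on $\MCor$ must take the same value on both pairs. But the slice-wise sheafifications over $X$ and over $Y$ genuinely differ: a blow-up $Y$ admits étale covers that are not pulled back from $X$. So the slice-wise construction does not descend to $\MPST$, and compatibility with transfers is not a matter of ``checking''; it is obstructed.

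This is precisely why the paper, following \cite[Thm 2]{KMSY1}, records the formula
\[\underline{a}_{\Nis}(G)_{(X, D)} = \varinjlim f_*(G_{(Y, f^*D), \Nis}),\]
the filtered colimit over all proper $f \colon Y \to X$ inducing isomorphisms $Y \setminus f^*D \cong X \setminus D$: the colimit over models is exactly what reconciles slice-wise sheafification with the proper-model isomorphisms in $\MCor$, and establishing that this formula defines a presheaf with transfers, lands in $\MNST$, and is an exact left adjoint is the substance of \cite{KMSY1} --- not of Voevodsky's cd-structure formalism. Your exactness argument inherits the same defect: exactness of sheafification on a cd-site concerns a different functor; for the functor above one must deal with the fact that $f_*$ is a priori only left exact, which is handled via the filtered colimit in the KMSY argument. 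Deferring the M-reciprocity clause to \cite{KMSY2} is legitimate (the paper does the same), but without the colimit-over-models construction there is no functor $\underline{a}_{\Nis}$ to which that clause could apply.
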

The existence of the sheafification functor is proven in \cite{KMSY1}. It follows from \cite[Thm 2]{KMSY2} that it is compatible with 
\(M\)-reciprocity. By \cite[Thm 2]{KMSY1} the sheafifcation functor is determined by the formula
\[\underline{a}_{\Nis}(G)_{(X, D)} = \varinjlim f_*(G_{(Y, f^*D), \Nis}),\]
where the colimit is taken over a directed set of proper morphisms \(f \colon Y \to X\) which induce an isomorphism on the complement
\(Y \setminus  f^*D \cong X \setminus  D  \) and the index $\Nis$ on the right denotes the Nisnevich
sheafification on the category of \'etale $Y$-schemes.
Thus, 
\begin{equation}\label{eq:NisNis}
\underline{\omega}_!(\underline{a}_{\Nis}(G)) = (\underline{\omega}_!G)_{\Nis},
\end{equation}
where the index $\Nis$ on the right denotes the Nisnevich sheafification on ${\rm Sm}$,
which by a result of Voevodsky restricts to a functor $\PST\to\textbf{NST}$.
Furthermore,
\begin{equation}\label{CorKMSY}
  \Ext^i_{\MNST}(\Z_{\tr}(X, D), G) = \varinjlim H^i(Y_{\Nis}, G_{(Y, f^*D)}).  
\end{equation}

This leads to the following question, see \cite[Question 1]{KMSY1}:
\begin{question}
Does \eqref{CorKMSY} stabilize for 
\(G \in \CI^{\tau, sp}_{\Nis}:= \CI^{\tau, sp} \cap \MNST?\)\footnote{It is shown in \cite[no 6.9]{RS-AS} that this question has a negative answer,
if the base field has positive characteristic $p$, the divisor of the modulus pair $(X,D)$ 
has a component of multiplicity divisible by $p$ and $G=\widetilde{\Omega^q}$ with $q\neq 0,\dim X$.  
But it remains an interesting question for which $G$ and $(X,D)$ one has a positive answer. 
Assuming resolutions of singularity this is for example the case if $q=\dim X$, see \cite[Cor 7.5]{RS-AS}.}
\end{question}

A fundamental result by S. Saito generalizes Voevodsky's strict homotopy invariance theorem by proving that Nisnevich sheafication preserves \(\boks\)-invariance. The proof requires in particular a delicate extension of  Voevodsky's theory of standard triples 
(see \cite[Chap 3, 4.]{FV}) to the setup of modulus pairs. 
\begin{theorem}[{\cite[Thm 10.1]{Sai}}]\label{thm:NisSheafication}
We have
\[\underline{a}_{\Nis}(\CI^{\tau, sp}) \subset \CI^{\tau, sp}_{\Nis}.\]
\end{theorem}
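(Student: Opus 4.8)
The category $\CI^{\tau,sp}$ is cut out by three conditions, so the plan is to verify each of them for $\underline{a}_{\Nis}(G)$ when $G \in \CI^{\tau,sp}$. M-reciprocity is immediate from the sheafification theorem quoted above, which sends presheaves with M-reciprocity to sheaves with M-reciprocity. Semi-purity is essentially formal: for $G$ semi-pure the presheaf-level inclusions $G(Y', f^*D|_{Y'}) \hookrightarrow G(Y'\setminus f^*D, \emptyset)$, ranging over \'etale $Y$-schemes $Y'$, assemble into an injection of Nisnevich sheaves on the \'etale site of $Y$; since $\underline{a}_{\Nis}$ is exact and the defining formula $\underline{a}_{\Nis}(G)_{(X,D)} = \varinjlim f_*(G_{(Y,f^*D),\Nis})$ is a filtered colimit taken over proper modifications $f\colon Y\to X$ with $Y\setminus f^*D \cong X\setminus D$, injectivity survives sheafification, pushforward, and the colimit. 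The entire difficulty is therefore concentrated in preserving $\boks$-invariance, which is the modulus incarnation of Voevodsky's strict homotopy invariance theorem.

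To handle $\boks$-invariance, the plan is to reduce the $\boks$-invariance of $\underline{a}_{\Nis}(G)$ to a \emph{strict} $\boks$-invariance statement for $G$ itself, namely that all the Nisnevich cohomology presheaves $U \mapsto H^i_{\Nis}(U, G_{(Y,f^*D)})$ entering the formula \eqref{CorKMSY} are $\boks$-invariant. First I would insert the sheafification formula into both sides of the projection map $p^*\colon \underline{a}_{\Nis}(G)(\X) \to \underline{a}_{\Nis}(G)(\X\otimes\boks)$; the $i=0$ isomorphism is the statement we want, but, exactly as in the homotopy-invariant case, one must prove the stronger claim in all degrees $i$ in order to control the interaction of Nisnevich sheafification with cohomology along the reduction. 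Since $\underline{a}_{\Nis}(G)$ is a Nisnevich sheaf, it then suffices to check the isomorphism on stalks, that is, over henselisations of local rings of smooth $k$-schemes, which places the problem in the local setting where Voevodsky's machinery operates.

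The heart of the argument is a Gersten-type analysis of these local cohomology groups carried out through the theory of standard triples, adapted to modulus pairs. For a smooth affine $S$-scheme equipped with a chosen compactification one produces, as in \cite[Chap.~3,~4]{FV}, a good function realising a contracting homotopy as a finite correspondence; combined with the transfer structure and the cube-invariance of $G$, this forces the relevant local cohomology to be concentrated in the expected degree and to be $\boks$-invariant, after which the stalkwise computation closes the induction. The hard part, and the one genuinely new input, will be that every correspondence used in the homotopy must be a \emph{modulus} correspondence: condition \ref{MPii} in the definition of $\MCor$, namely $D|_{V^N} \geq E|_{V^N}$, has to be maintained throughout the moving process. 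In Voevodsky's $\A^1$-theory there is no divisor and the homotopies move freely; here the standard triples and their accompanying functions must be chosen so that the pole order along the boundary divisor stays uniformly controlled. This modulus-compatible refinement of the standard-triple construction is the delicate extension referred to above, and it is the principal obstacle; once it is established, the local computation proceeds as in the homotopy-invariant case and yields the $\boks$-invariance of $\underline{a}_{\Nis}(G)$, completing the proof that $\underline{a}_{\Nis}(\CI^{\tau,sp}) \subset \CI^{\tau,sp}_{\Nis}$.
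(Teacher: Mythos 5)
Your decomposition and strategy coincide with the paper's treatment of this statement: the paper offers no proof of its own but cites \cite[Thm 10.1]{Sai}, and describes that proof exactly as you do --- M-reciprocity follows from the sheafification theorem of Kahn--Miyazaki--Saito--Yamazaki, semi-purity is formal from the colimit formula, and the entire difficulty is the preservation of $\boks$-invariance, proved by a strict-invariance induction on cohomological degree via a modulus-compatible extension of Voevodsky's standard triples. Be aware, though, that like the paper you have only \emph{named} rather than constructed that extension (your ``once it is established''), and this construction is the actual mathematical content of Saito's work, so your text is a correct roadmap of \cite{Sai} rather than a self-contained argument.
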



\begin{corollary}\label{cor:RSC-ab}
For every \(F \in \RSC\), the Nisnevich sheafification \(F_{\Nis}\) belongs to \(\RSC_{\Nis}\).
In particular, \(\RSC_{\Nis} \subset \textnormal{\textbf{NST}}\) is a full abelian subcategory.
\end{corollary}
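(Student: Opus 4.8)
The plan is to derive both assertions formally from the two adjunctions already at hand, the sheafification formula \eqref{eq:NisNis}, and Saito's \Cref{thm:NisSheafication}, which carries all the analytic weight. First I would record the elementary identity $\underline{\omega}_!\underline{\omega}^{\CI} = \mathrm{id}_{\RSC}$: by definition $\underline{\omega}_!\tilde{F}(X) = \tilde{F}(X,\emptyset)$, and since $F$ is a reciprocity presheaf (\Cref{def:RSC}) every section $a \in F(X)$ admits a proper modulus pair with complement $X$, so $\tilde{F}(X,\emptyset) = F(X)$ and hence $\underline{\omega}_!\tilde{F} = F$. Now set $G := \underline{a}_{\Nis}(\tilde{F}) = \underline{a}_{\Nis}(\underline{\omega}^{\CI}F)$. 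By \Cref{thm:NisSheafication} we have $G \in \CI^{\tau, sp}_{\Nis} \subset \CI^{\tau, sp}$, so the adjunction $\underline{\omega}_!\colon \CI^{\tau, sp} \rightleftarrows \RSC$ gives $\underline{\omega}_!G \in \RSC$, while the restriction of $\underline{\omega}_!$ to $\MNST$ lands in $\textbf{NST}$, so $\underline{\omega}_!G \in \textbf{NST}$. Finally \eqref{eq:NisNis} yields
\[ \underline{\omega}_!G = \underline{\omega}_!(\underline{a}_{\Nis}\tilde{F}) = (\underline{\omega}_!\tilde{F})_{\Nis} = F_{\Nis}. \]
Therefore $F_{\Nis} \in \RSC \cap \textbf{NST} = \RSC_{\Nis}$, which is the first assertion.

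For the abelian statement, $\RSC_{\Nis}$ is full in $\textbf{NST}$ by construction, and $\textbf{NST}$ is abelian, so it suffices to verify closure under kernels and cokernels. For $f\colon F \to G$ in $\RSC_{\Nis}$ the kernel in $\textbf{NST}$ is the subpresheaf $K = \ker f \subset F$. Given $a \in K(U) \subset F(U)$, choose a modulus $(X,D)$ for $a$ viewed in $F$; I claim it is a modulus for $a$ in $K$. Indeed, by \Cref{rmk:surj} the map $\Z_{\tr}(U) \twoheadrightarrow \underline{\omega}_!h_0^{\boks}(X,D)$ is surjective, so the image of $\underline{\omega}_!h_0^{\boks}(X,D) \to F$ equals the sub-presheaf-with-transfers generated by $a$, which is contained in $K$ because $K$ is closed under the transfers $\gamma^*$; since $K \hookrightarrow F$ is a monomorphism, the factorisation of \Cref{def:ModulusCondition} descends to $K$, whence $K \in \RSC$ and so $K \in \RSC_{\Nis}$. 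Dually, writing $Q$ for the presheaf cokernel of $f$ in $\PST$, a section of $Q(U)$ lifts to some $b \in G(U)$, and composing a modulus factorisation for $b$ with $G \twoheadrightarrow Q$ shows every section of $Q$ has a modulus, so $Q \in \RSC$; the cokernel of $f$ in $\textbf{NST}$ is $Q_{\Nis}$, which lies in $\RSC_{\Nis}$ by the first part. Together with the evident stability under finite direct sums, this exhibits $\RSC_{\Nis}$ as a full abelian subcategory of $\textbf{NST}$.

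The genuine obstacle is entirely concentrated in \Cref{thm:NisSheafication}, namely that Nisnevich sheafification preserves $\boks$-invariance and semi-purity; this is Saito's generalisation of Voevodsky's strict homotopy invariance theorem, and is the deep input we are permitted to assume. Granting it, the argument above is purely formal: the first assertion is a composition of the two $\underline{\omega}_!$-adjunctions with \eqref{eq:NisNis}, and the only non-formal verifications are the stability of the reciprocity condition under subobjects and quotients, where the surjectivity recorded in \Cref{rmk:surj} is precisely what makes the modulus factorisation descend.
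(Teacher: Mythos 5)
Your proof is correct and takes essentially the same route as the paper: both set $G := \underline{a}_{\Nis}(\tilde{F})$, invoke Saito's \Cref{thm:NisSheafication} to get $G \in \CI^{\tau, sp}_{\Nis}$, and conclude via \eqref{eq:NisNis} that $F_{\Nis} = \underline{\omega}_!(G) \in \RSC \cap \textbf{NST} = \RSC_{\Nis}$. Your additional verifications — that $\underline{\omega}_!\tilde{F} = F$, and that $\RSC$ is stable under sub-presheaves and presheaf quotients in $\PST$ so that kernels and cokernels in $\textbf{NST}$ stay in $\RSC_{\Nis}$ — are also correct and supply exactly the details the paper leaves implicit in its ``in particular''.
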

\begin{proof}
By \cref{thm:NisSheafication} we have
\[G := \underline{a}_{\Nis}(\tilde{F}) \in \CI^{\tau, sp}_{\Nis}=\CI^{\tau, sp} \cap \MNST.\]
Together with \eqref{eq:NisNis} we get
\[F_{\Nis} = \underline{\omega}_!(G) \in \RSC\cap\textbf{NST}= \RSC_{\Nis}.\]
\end{proof}

The following purity theorem by S. Saito 
generalizes the \(\A^1\)-invariant purity theorem by Voevodsky and will be essential in
section \ref{sec:CohRec}.
\begin{theorem}[{\cite[Theorem 0.2]{Sai}}] \label{thm:ReciprocityPurity}
For \(F \in \RSC_{\Nis}\) and $x \in X^{(c)}$ a $c$-codimensional point  in \(X\)  we have
\[H^i_x(X, F)=0,\quad \text{for } i \neq c,\]
and
\begin{equation}\label{eq:DependsOnC}
    H^c_x(X, F) \simeq F_{-c}(X):= \frac{F((\A^1 \setminus \{0\})^c \times x)}{\sum_{i=1}^n F((\A^1 \setminus \{0\})^{i-1} \times \A^1 \times (\A^1 \setminus \{0\})^{c-i} \times x)}.
\end{equation}
\end{theorem}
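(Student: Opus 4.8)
The plan is to reduce the global statement to a purely local computation and then run an induction on the codimension $c$, with the codimension-one case carrying all the essential content. First I would pass to the local situation: by excision the groups $H^i_x(X,F)$ depend only on the Henselian local ring $\mathcal{O}^h_{X,x}$, which is regular of dimension $c$, so I may replace $X$ by $S = \Spec \mathcal{O}^h_{X,x}$ with $x$ its closed point. Choosing a regular system of parameters $t_1,\dots,t_c$ presents $x$, Nisnevich-locally, as the transverse intersection of the smooth divisors $Z_i = \{t_i = 0\}$, and the iterated cohomology-with-supports spectral sequence (successive supports along the $Z_i$) reduces both the vanishing and the identification of $H^c_x$ to a codimension-one \emph{Gysin isomorphism} applied $c$ times. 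This is where the contraction enters, iterating to the quotient \eqref{eq:DependsOnC}.

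The heart of the argument is therefore the following codimension-one statement: for a smooth divisor $Z \subset X$ with complement $U = X \setminus Z$, one has $H^i_Z(X,F) = 0$ for $i \neq 1$, together with a canonical isomorphism between $H^1_Z(X,F)$ and the contraction $F_{-1}$ of $F$ evaluated along $Z$. To produce this I would work with the associated modulus sheaf $\tilde F = \underline{\omega}^{\CI}(F) \in \CI^{\tau, sp}_{\Nis}$ and exploit that, Nisnevich-locally around a point of $Z$, the pair $(X,Z)$ looks like $\A^1 \times Z$ with $Z = \{0\} \times Z$, via a trivialization of the normal bundle. The localization triangle relating $F$ on $X$, on $U$, and the supports along $Z$ then identifies $H^1_Z$ with the cokernel measuring sections on the punctured neighbourhood $(\A^1 \setminus \{0\}) \times Z$ modulo those extending over $\A^1 \times Z$, which is precisely the defining expression for $F_{-1}$ appearing in \eqref{eq:DependsOnC} for $c=1$. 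The $\boks$-invariance of $\tilde F$, together with Saito's strict $\boks$-invariance theorem \Cref{thm:NisSheafication}, is what makes this cokernel computation legitimate after Nisnevich sheafification, replacing the role played by homotopy invariance in Voevodsky's $\A^1$-invariant purity.

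The two vanishing ranges are handled separately. The lower vanishing $H^i_x(X,F)=0$ for $i < c$ is the cohomological incarnation of \emph{semi-purity}: the inclusion $\tilde F(X,D) \subset \tilde F(X \setminus D)$ forces sections to be detected on dense opens, which, via the injectivity theorem for reciprocity sheaves \cite[Thm 6]{BSY}, yields the Gersten-type injectivity needed to kill local cohomology below the codimension. The upper vanishing $H^i_x = 0$ for $i > c$, and the exactness of the iterated Gysin maps, come out of the codimension-one analysis above, since each step contributes cohomology only in degree one. Assembling the $c$ Gysin isomorphisms along $Z_1,\dots,Z_c$ then yields $H^c_x(X,F) \cong F_{-c}$ in the form \eqref{eq:DependsOnC}, the symmetric quotient reflecting the independence of the order in which the parameters are contracted.

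The main obstacle I anticipate is precisely the codimension-one Gysin isomorphism in the modulus setting. Unlike the $\A^1$-invariant case, a reciprocity sheaf genuinely sees the pole order and ramification along $Z$, so the normal direction cannot be contracted by a naive homotopy; it must be controlled through the modulus filtration and the cube $\boks$. Making the local trivialization and the support-moving arguments work Nisnevich-locally requires Saito's delicate extension of Voevodsky's theory of standard triples to modulus pairs, and the technically most demanding point is verifying that the boundary map in the localization sequence both lands in and surjects onto the contraction $F_{-1}$, rather than some larger or smaller subquotient.
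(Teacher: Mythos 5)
First, a point of order: the notes you are writing into contain no proof of this statement at all — the theorem is quoted verbatim from Saito's work (\cite[Thm 0.2]{Sai}) — so your sketch has to be judged against what that proof actually requires. Judged that way, it has a genuine gap: you have moved the entire content of the theorem into the ``codimension-one Gysin isomorphism'' and then not proved it. For a henselian discrete valuation ring $\mathcal{O}=k(x)\{t\}\cong\mathcal{O}^h_{X,x}$ with fraction field $K$, the localization sequence (plus injectivity) only gives $H^1_x(X,F)\cong F(K)/F(\mathcal{O})$ and $H^i_x=0$ for $i\geq 2$; the theorem is the further claim that the coordinate $t$ induces an isomorphism $F_{-1}(x)=F((\A^1\setminus\{0\})_{k(x)})/F(\A^1_{k(x)})\xrightarrow{\ \sim\ }F(K)/F(\mathcal{O})$. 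This is \emph{not} a formal consequence of the localization triangle together with cube-invariance of $\tilde{F}$ and Theorem \ref{thm:NisSheafication}: surjectivity is a genuine approximation statement (every class in $F(K)/F(\mathcal{O})$ is pulled back from the algebraic punctured line over $k(x)$), and for non-$\A^1$-invariant $F$ it is precisely the main theorem of Saito's paper, proved there via the filtrations $\tilde{F}(\mathcal{O},\mathfrak{m}^{-n})$ (whose union is $F(K)$ by M-reciprocity), the extension of Voevodsky's standard triples to modulus pairs, and strict cube-invariance. Your final paragraph correctly names this as the obstacle, but naming it is not closing it; as written, the proposal reduces the theorem to itself in codimension one.

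Two further points would need work even granting the codimension-one case. First, to iterate you need the statement at the level of sheaves on $Z_1=\{t_1=0\}$, i.e.\ $\mathcal{H}^i_{Z_1}(F_S)=0$ for $i\neq 1$ and $\mathcal{H}^1_{Z_1}(F_S)\cong (F_{-1})_{Z_1}$ (non-canonically, using $t_1$ — recall the remark following the theorem that the isomorphism is not functorial), and you must then apply the inductive hypothesis to $F_{-1}$; this requires the stability statement that $F_{-1}$ is again an object of $\RSC_{\Nis}$ (transfers, sheaf property, reciprocity), which you never address and which is itself nontrivial. Second, your argument for the vanishing in degrees $0<i<c$ is insufficient: the injectivity theorem of \cite{BSY} kills $H^0_x$ only, whereas killing the intermediate local cohomology is exactly what Gersten-type effacement or the Cousin resolution \eqref{eq:Cousin} provides — and in the present notes that resolution is \emph{deduced from} this purity theorem, so invoking it here is circular. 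Both vanishing ranges have to come out of the iterated codimension-one computation, which returns you to the gap above.
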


\begin{remark}
The isomorphism in \eqref{eq:DependsOnC} depends on the choice of a $k$-isomorphism 
\[k(x)\{t_1, \hdots, t_c\} \overset{\simeq}{\longrightarrow} \mathcal{O}^h_{X, x},\]
where \(\mathcal{O}^h_{X, x}\) denotes the henselization of \(\mathcal{O}_{X, x}\).
In particular, it is not functorial. 
(As one directly sees by considering the case $F=\G_a$ and $c=1$.) 
Note that if $F$ is $\A^1$-invariant the isomorphism is independent of this choice,
see \cite[Chap. 3, Lem 4.36]{FV}.
\end{remark}

Using \cref{thm:ReciprocityPurity} we find that the $E_1$-complex of the coniveau spectral sequence has vanishing cohomology except in degree zero, hence we obtain the  \textit{Cousin resolution}
\begin{equation}\label{eq:Cousin}
0 \to F_X \to \bigoplus_{x \in X^{(0)}} i_{x*} H^0_x(F) \to \cdots \to \bigoplus_{x\in X^{(c)}} i_{x*} H_x^c(F) \to \cdots,
\end{equation}
on $X_{\Nis}$ as a generalization of the  Gersten resolution in the $\A^1$-invariant case.
The injectivity of the first morphism was already proven in \cite{BSY}.

\subsection{Relation with the logarithmic theory}
Further results by S. Saito give the relation with the logarithmic theory of Binda--Park--Østvær.

\begin{definition}\label{defn:transversal}
We say that \(\X = (X, D)\) is an \textit{ls modulus pair} if \(X \in {\rm Sm}\) and \( D_{\rm red}\) is a strict normal crossing divisor on \(X\). 
(Note that $D$ is allowed to be non-reduced.)
Let \(\MCor_{ls}\) denote the full subcategory of \(\MCor\) of ls modulus pairs.

A morphism \(f \colon Y \to X\) of smooth schemes is \textit{transversal} to \(D\) if 
\[f^{-1}(D_1 \cap \hdots \cap D_r) \hookrightarrow Y\]
is regular closed embedding of codimension equal to \(r\), for any irreducible components \(D_1, \hdots, D_r\) of \(D_{\rm red}\). 
\end{definition}

\begin{definition}
Let \((X, \mathcal{M})\) be a smooth log smooth scheme, 
where $\mathcal{M}$ is a monoid sheaf with a multiplicative map $\mathcal{M}\to \mathcal{O}_X$, 
which is an isomorphism over $\mathcal{O}_X^\times$, defining the log structure. 
By definition $\sup\mathcal{M}$ denotes 
the support of the monoid sheaf $\mathcal{M}/\mathcal{O}_X^\times$.
We have \((X, \sup \mathcal{M})\in \MCor_{\text{ls}}\) (e.g. \cite[Lem A.5.10]{BPO}) 
and define for \(F \in \RSC_{\Nis}\) 
\[F^{\log}(X, \mathcal{M}) :=  \tilde{F}(X, \sup\mathcal{M}).\]
\end{definition}

\begin{theorem}[{\cite[Thm 6.1 and Thm 6.3]{Sailog}}]
Let \(\Shv_{\text{dNis}}^{\text{ltr}}\) denote the category of dividing Nisnevich sheaves with log-transfers on log smooth fs log schemes, in the sense  of \cite[Def 4.2.1]{BPO}.
Then there exists a functor
\[\RSC_{\Nis} \longrightarrow \Shv_{\text{dNis}}^{\text{ltr}}\]
sending \(F \mapsto F^{\log}\), which is exact and fully faithful.
Moreover, \(F^{\log}\) is strictly \(\boks\)-invariant, that is for smooth log smooth schemes 
$(X,\mathcal{M})$ we have 
\[H^i_{\text{dNis}}((X, \mathcal{M}), F^{\log}) = H^i_{\text{dNis}}((X, \mathcal{M}) \times \boks, F^{\log})\]
and the Nisnevich cohomology of $F_{(X,\sup{\mathcal{M}})}$ 
(with the notation from Definition \ref{defn:mod-sheaf}) is representable
in the triangulated category of logarithmic motives \(\logDM\) constructed in \cite{BPO}:
\[H^i(X, F_{(X,\sup{\mathcal{M}})}) \cong \Hom_{\logDM}(M(X, \mathcal{M}), F^{\log}[i]).\]
\end{theorem}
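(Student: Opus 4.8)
The plan is to realise the functor as a composite $F\mapsto\widetilde{F}\mapsto F^{\log}$ and to reduce everything to the already-established good behaviour of the modulus presheaf $\widetilde{F}$. Recall that $\underline{\omega}^{\CI}(F)=\widetilde{F}$ lies in $\CI^{\tau,sp}$, and that for $F\in\RSC_{\Nis}$ one has $\widetilde{F}\in\CI^{\tau,sp}_{\Nis}$ (this is where \cref{thm:NisSheafication} enters). By construction $F^{\log}$ is simply the restriction of $\widetilde{F}$ to the subcategory $\MCor_{ls}$, reinterpreting an fs log smooth $(X,\mathcal{M})$ as the ls pair $(X,\sup\mathcal{M})$. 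The theorem then splits into three verifications: that this restriction is a dividing Nisnevich sheaf with log transfers, that $F\mapsto F^{\log}$ is exact and fully faithful, and that $F^{\log}$ is strictly $\boks$-invariant with the asserted representability.

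First I would check membership in $\Shv^{\text{ltr}}_{\text{dNis}}$. The log-transfer structure is inherited from the $\MCor$-action on $\widetilde{F}$, once one verifies that a log finite correspondence between fs log smooth schemes in the sense of \cite{BPO} satisfies the admissibility conditions \ref{MPi}--\ref{MPii} defining $\MCor$; this is a direct comparison of the two correspondence categories. The strict Nisnevich sheaf property is immediate from $\widetilde{F}\in\MNST$, since $U\mapsto\widetilde{F}(U,\sup\mathcal{M}|_U)$ is then a Nisnevich sheaf on étale $X$-schemes. The genuinely new point is invariance under \emph{dividing} covers: for a log modification $f\colon(Y,\mathcal{N})\to(X,\mathcal{M})$ I must show $\widetilde{F}(X,\sup\mathcal{M})\cong\widetilde{F}(Y,\sup\mathcal{N})$. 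As $f$ is proper and an isomorphism over the common open complement $U$, both groups sit inside $F(U)$ by semi-purity, and one checks that the two modulus conditions agree in the M-reciprocity colimit: pulling back the boundary along the proper birational $f$ changes a modulus $(\overline{X},\overline{D}+NB)$ only by the discrepancy of $f$, which is absorbed once $N$ is enlarged. This is precisely where the colimit in M-reciprocity is indispensable.

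Next, exactness and full faithfulness. Since $\underline{\omega}_!G(X)=G(X,\emptyset)$ is evaluation, $\underline{\omega}_!$ is exact, and semi-purity both makes it conservative on $\CI^{\tau,sp}$ and yields $\underline{\omega}_!\widetilde{F}=F$. Thus the counit of $\underline{\omega}_!\dashv\underline{\omega}^{\CI}$ is an isomorphism, so $\underline{\omega}^{\CI}=\widetilde{(-)}$ is fully faithful; and being left exact with an exact conservative retraction, it is exact. It then remains to see that restricting from $\MCor$ to $\MCor_{ls}$ loses no information. Here I would invoke resolution of singularities to dominate an arbitrary modulus pair by ls ones after a proper birational modification; dividing invariance from the previous step then shows that the values of $\widetilde{F}$ on general pairs are determined by those on $\MCor_{ls}$, so $F\mapsto F^{\log}$ remains exact and fully faithful.

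The hard part is strict $\boks$-invariance and the resulting representability. The strategy is to compare dividing Nisnevich cohomology of $F^{\log}$ with Nisnevich cohomology of the modulus sheaf on $X_{\Nis}$. Computing the dividing topology as a colimit over log modifications and using \eqref{CorKMSY}, one obtains $H^i_{\text{dNis}}((X,\mathcal{M}),F^{\log})\cong H^i(X,\widetilde{F}_{(X,\sup\mathcal{M})})$, which is the displayed comparison and at the same time reduces strict $\boks$-invariance to the cube-invariance of $H^i(X,-)$ of the modulus sheaf. That cohomological cube-invariance is Saito's strict $\boks$-invariance theorem for reciprocity sheaves, extracted from the Cousin resolution \eqref{eq:Cousin} and the purity isomorphism of \cref{thm:ReciprocityPurity}, through which the presheaf-level $\boks$-invariance of $\widetilde{F}$ propagates to cohomology; this --- resting on \cref{thm:NisSheafication} and Saito's extension of Voevodsky's standard-triple machinery --- is the main obstacle of the whole theorem. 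Granting it, representability is formal: $F^{\log}$ is then $\boks$-local, so by the universal property of the localisation defining $\logDM$ the complex $\Hom_{\logDM}(M(X,\mathcal{M}),F^{\log}[i])$ computes $H^i_{\text{dNis}}((X,\mathcal{M}),F^{\log})$, and the comparison above identifies this with $H^i(X,F_{(X,\sup\mathcal{M})})$.
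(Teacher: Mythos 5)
The paper itself gives no proof of this statement (it is quoted from \cite[Thm 6.1, Thm 6.3]{Sailog}), so your argument must stand on its own; it contains genuine gaps at two of the three main steps.

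First, exactness. Your categorical shortcut (``$\widetilde{(-)}$ is left exact and admits the exact conservative retraction $\underline{\omega}_!$, hence is exact'') proves at most that $\widetilde{F}\to\widetilde{F''}$ has vanishing cokernel in $\CI^{\tau,sp}_{\Nis}$; it does not give preservation of cokernels, because $\CI^{\tau,sp}_{\Nis}$ is not an abelian category in which ``mono and epi implies iso'': cokernels there are formed by Nisnevich sheafification followed by semi-purification, and the coimage-to-image comparison can fail to be an isomorphism. Indeed, the paper explicitly records, in its discussion of $(F,G)_{\RSC_{\Nis}}$, that $\underline{\omega}^{\CI}$ is \emph{not} right exact---flatly contradicting the conclusion of your retraction argument. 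Exactness of $F\mapsto F^{\log}$ is true only because cokernels in $\Shv_{\text{dNis}}^{\text{ltr}}$ are computed in the \emph{dividing} topology: a section of $\widetilde{F''}(X,D)$ lifts to $\widetilde{F}$ only after a log modification and Nisnevich localization, not on the nose. You invoke dividing covers for the sheaf condition but never for exactness, which is exactly where they are indispensable. Relatedly, your appeal to resolution of singularities is both unnecessary for full faithfulness (which follows from evaluation at trivial log structures, where $F^{\log}$ returns $F$, combined with semi-purity $F^{\log}(X,\mathcal{M})\subset F(X\setminus \sup\mathcal{M})$) and harmful: it would make the theorem conditional in positive characteristic, whereas Saito's result is unconditional.

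Second, dividing invariance and the cohomology comparison. For a log modification $f\colon (Y,\mathcal{N})\to (X,\mathcal{M})$, the group to be computed is $\widetilde{F}(Y,\sup\mathcal{N})$, and $\sup\mathcal{N}$ is the \emph{reduced} divisor $(f^*D)_{\rm red}$, where $D=\sup\mathcal{M}$. A properness/valuative argument (e.g.\ via \eqref{eq:Comp-Mod1}) yields $\widetilde{F}(X,D)\cong\widetilde{F}(Y,f^*D)$, but since a smaller divisor imposes a \emph{stronger} modulus condition, the required identification $\widetilde{F}(X,D)\cong\widetilde{F}(Y,(f^*D)_{\rm red})$ is strictly more; it is a purity/Zariski--Nagata-type theorem for reduced SNC divisors (cf.\ \cref{thm:ZNP} and \cite[Cor 2.4]{Sailog}). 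Your ``discrepancy absorbed in the M-reciprocity colimit'' cannot produce it, because M-reciprocity only enlarges the boundary $B$ at infinity, never the components of $D$. Likewise, your identification $H^i_{\text{dNis}}((X,\mathcal{M}),F^{\log})\cong H^i(X,F_{(X,\sup\mathcal{M})})$ needs the colimit over log modifications in \eqref{CorKMSY} to be \emph{constant}, i.e.\ log-modification invariance of $H^i(Y,\widetilde{F}_{(Y,-)})$ for reduced SNC divisors. The paper's footnote to the Question following \eqref{CorKMSY} warns that such stabilization fails in general (in characteristic $p$, for divisors with multiplicities divisible by $p$), so for the reduced divisors at hand it is a substantive theorem of \cite{Sailog}, not a consequence of unwinding the dividing topology. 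Your reduction of strict $\boks$-invariance to cube-invariance of modulus-sheaf cohomology (\cite[Thm 9.3]{Sai}) is the right idea, but it only becomes available once that comparison has actually been established.
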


\section{De Rham--Witt sheaves as reciprocity sheaves}
In this section we give a short introduction to the de Rham--Witt sheaves introduced 
by Bloch \cite{Bloch} and Deligne-Illusie \cite{Illusie}, describe some basic properties, 
define transfers, and show that they are reciprocity sheaves. From this one obtains 
many interesting reciprocity sheaves, which are useful, e.g.,  in the 
study of crystalline cohomology, the Brauer group, and \'etale motivic cohomology with $p$-primary 
torsion coefficients.

\subsection{Motivation}
Let \(X\) be a smooth projective scheme over \(\F_{p^n}\) and set
\[H^i:= H^i_{\text{crys}}(X/W(\F_{p^n}))\left[\tfrac{1}{p}\right],\]
the $i$-th  crystalline cohomology group, 
where \(W(\F_{p^n})\) denotes the Witt vectors of \(\F_{p^n}\).
Then $H^i$ is a finite dimensional vector space over  the field $W(\F_{p^n})[1/p]$,
and considering the action of Frobenius on it, it becomes a \(F := (F^n_X)^*\)-crystal. Such crystals have a slope decomposition
\[H^i \cong \bigoplus H^i_\lambda\]
where \(\lambda\) ranges through the non-negative rational numbers. Here \(H^i_{\lambda}\) is a sub-vector space on which the Frobenius acts with eigenvalues having \(p\)-adic valuation equal to \(\lambda\).

One of the main motivation behind the construction of the de Rham--Witt complex 
is the wish  to understand this slope decomposition from a cohomological point of view. 
And indeed, the de Rham-Witt complex $W\Omega^*$ computes crystalline cohomology by
\[H^i= H^i(X_{\text{Zar}}, W\Omega^*) \left[\tfrac{1}{p}\right].\]
Furthermore, the Hodge-to-de Rham spectral sequence yields 
the \textit{slope spectral sequence} given by
\[E^{j, i}_1 = H^i(X, W\Omega^j) \left[\tfrac{1}{p}\right]\Longrightarrow H^*,\]
which degenerates to give
\[\bigoplus_{j \leq \lambda < j+1} H^i_\lambda = H^{i-j}(X, W\Omega^j) \left[\tfrac{1}{p}\right].\]

\subsection{Witt vectors}
Let \(A\) be an \(\F_p\)-algebra. We recall, that for \(n\geq 1\) 
the  \textit{Witt vectors} of length  \(n\) of  \(A\) form a ring whose underlying set
is equal to \(W_n(A) = \{(a_0, \hdots, a_{n-1}) : a_i \in A\}\),
and whose ring structure is defined in such a way that 
the following properties hold:
The map  
\[R : \left\{
\begin{matrix}
W_{n+1}(A)& \longrightarrow & W_{n}(A)\\
(a_0, \hdots, a_{n}) & \longmapsto & (a_0, \hdots, a_{n-1}),
\end{matrix}
\right.
\]
is a ring-map, called the {\em restriction}.
The map 
\[
F \colon \left\{
\begin{matrix}
W_{n+1}(A) & \longrightarrow & W_{n}(A)\\
(a_0, \hdots, a_{n}) & \longmapsto & (a_0^p, \hdots, a_{n-1}^p),
\end{matrix}
\right.
\]
is a ring-map, called the {\em Frobenius}. The map
\[
V \colon \left\{
\begin{matrix}
W_{n}(A) & \longrightarrow & W_{n+1}(A)\\
(a_0, \hdots, a_{n-1}) & \longmapsto & (0, a_0, \hdots, a_{n-1}),
\end{matrix}
\right.
\]
is a group-map, called the {\em Verschiebung} (or shift).
The map
\[
[-] : \left\{
\begin{matrix}
A & \longrightarrow & W_{n}(A)\\
a & \longmapsto & [a]:=(a, 0, \hdots, 0),
\end{matrix}
\right.\]
is multiplicative and is called the Teichmüller lift.
Furthermore,
\begin{itemize}
    \item \(W_1(A) = A\) as a ring,
    \item \((a_0, \hdots, a_{n-1}) = \sum_{i=0}^{n-1}V^i([a_i])\), 
           where \(V^i=\underbrace{V\circ\ldots\circ V}_{i-\text{times}}\),
    \item \(FV=VF\) is multiplication by \(p\),
    \item \(V(a) \cdot b = V(a \cdot F(b))\).
\end{itemize}
Passing to the limit 
\[W(A) := \underset{\underset{n}{\longleftarrow}}{\lim} W_n(A),\]
where the transition maps are given by the restriction,
we get a ring which is \(p\)-torsion free if \(A\) is reduced.
For details, see, e.g., \cite[Chap II, \S6]{Serre-LF}.

\begin{example}
The Witt-vectors have the following properties:
\begin{itemize}
    \item \(W(\F_p) = \underset{\underset{n}{\longleftarrow}}{\lim}  W_n(\F_p) = \Z_p.\)
    \item if \(A\) is perfect, i.e., the Frobenius is an isomorphism, then \(W_n(A)\) is the unique flat \(\Z/p^n\Z\)-lift of \(A/\F_p\).
    \item The contravariant functor 
    \[ ({\rm Schemes}/\F_p)^{o}\to (\text{Ab-groups}), \quad X\mapsto H^0(X, W_n\mathcal{O}_X),\]
    is represented by a ring scheme \(W_n\). Thus the ring $W_n(A)$ is equal to the $A$-rational points of $W_n$.
    \item Any commutative unipotent \(\F_p\)-group scheme can be embedded into \(\bigoplus_{n_i} W_{n_i}\), viewed as a group scheme.
\end{itemize}
\end{example}

\subsection{The de Rham--Witt complex}
The {\em de Rham-Witt complex} of an \(\F_p\)-scheme \(X\),
as defined by Bloch--Kato \cite{Bloch},  \cite{Kato-LCFII},
or Deligne--Illusie \cite{Illusie} is a pro-differential graded algebra
\[((W_n \Omega^*, d)_{n\geq 1}, R),\]
where \(d\) is the differential and \(R\) is the restriction map, such that 
\[W_n \Omega^0=W_n \mathcal{O}_X.\] 
Furthermore, it is equipped with an extension of the Frobenius map 
\[F \colon W_{\bullet+1}\Omega^*_X \to W_{\bullet}\Omega^*_X,\]
and an extension of the Verschiebung map on \(W_{\bullet}\mathcal{O}_X\)
\[V \colon W_{\bullet}\Omega^*_X \to W_{\bullet+1}\Omega^*_X,\]
which satisfy the following conditions:
\begin{itemize}
    \item $F$ is a map of graded rings, $V$ is a map of graded groups,
    \item the composition of maps \(F V\) is given by multiplication by \(p\),
    \item \(V(\alpha) \cdot \beta = V(\alpha \cdot F(\beta))\) (\(F\)-linearity),
    \item \(F d V=d\),
    \item \(F d[a] = [a]^{p-1}d[a]\), for \(a\in W_\bullet\mathcal{O}_X\).
\end{itemize}

In fact, in \cite{HM} Hesselholt-Madsen show that  \(W_{\bullet}\Omega^*_X\) 
is the initial object in the category of pro-differentially graded algebras with the above 
properties. They also extend the definition to all $\Z_{(p)}$-algebras.

Moreover,  we have \(W_1\Omega^*_X = \Omega^*_{X/\F_p},\) and  there is a commutative square
\begin{equation*}
    \begin{tikzcd}
        W_{n+1}\Omega^j_X \arrow[r, "F"] \arrow[d] & W_n \Omega^j_X \arrow[d] \\
        \Omega^j_X \arrow[r, "C^{-1}"] & \Omega^j_X/d \Omega^{j-1}_X,
    \end{tikzcd}
\end{equation*}
i.e., \(F\) lifts the inverse Cartier operator $C^{-1}$, which is determined by the formula
\(C^{-1}(a d\log b) = a^p d\log b\).

\begin{remark}
\begin{enumerate}
    \item For an $\F_p$-algebra $A$, Bloch constructed the de Rham-Witt complex in \cite{Bloch} as 
    the pro-object
\[W_{\bullet}\Omega^q_A \cong 
T S \ker\left( K_{q+1}(A[T]/T^{\bullet}) \overset{T \mapsto 0}{\longrightarrow} K_{q+1}(A)\right),\]
where \(T\) denotes the \(p\)-typical part 
and \(S\) the symbolic part of Quillen $K$-theory. Bloch's construction was originally
limited to the case $\dim A<p$ and $p\neq 2$. This restriction was removed in \cite{Kato-LCFII}.
\item Following an idea of Deligne, Illusie constructed the de Rham-Witt complex in 
\cite{Illusie} as quotient of \(\Omega^q_{W_n \mathcal{O}_X/W_n(\F_p)}\), such that it is the universal example of a pro-dga with a Verschiebung $V$ satisfying certain properties. 
Then he proves that on this complex an $F$ as above exists.
\item If \(X\) is a smooth scheme over \(k\) with a smooth lift over \(X_n/W_n(k)\), then 
\[W_n \Omega^q_X \cong \mathbf{\mathcal{H}}^q\left(\Omega^*_{X_n/W_n(k)}\right),\]
 and one can show that this isomorphism is independent of the lift, 
 see  \cite[Chap III, (1.5)]{Illusie-Raynaud}. 
 As Illusie--Raynaud explain in  {\em loc. cit.}, it was observed by N. Katz that one can as well take the right hand side of the above isomorphism
 as the definition of the de Rham-Witt sheaves (using local lifts of $X$ over $W_n(k)$ and glue) and that it is possible to construct 
 the structure of a pro-dga  with maps  $F$ and $V$ using this description.
 \item We mention that there are other constructions of the de Rham-Witt complex  by Hesselholt--Madsen \cite{HM}, 
 Langer-Zink \cite{Langer-Zink},  Cuntz--Deninger \cite{Cuntz-Deningeer}, Hesselholt \cite{He-DRW}, 
 and Bhatt-Lurie-Mathew \cite{BLM}, each of which works in a different generality, but they all agree for smooth schemes over a perfect field of positive characteristic.
\end{enumerate}

\end{remark}

\begin{theorem}[{\cite[Chap II, Thm 1.4]{Illusie}}]
Let \(X\) be a smooth scheme over a perfect field \(k\) of characteristic $p>0$ and let
\(u \colon (X/W_n(k))_{\text{crys}} \to X_{\text{Zar}}\) be the change of sites map. Then there is an isomorphism
\[Ru_* \mathcal{O}_{X/W_n(k), \text{crys}} \cong W_n \Omega^\bullet_X.\]
\end{theorem}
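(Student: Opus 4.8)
The plan is to reduce to a local computation, invoke the comparison between crystalline and de Rham cohomology of a lift, and then compare the de Rham complex of the lift with \(W_n\Omega^\bullet_X\) by a dévissage governed by the Cartier isomorphism. The assertion is local on \(X_{\mathrm{Zar}}\), so first I would assume \(X = \Spec A\) is affine and smooth over \(k\). Since \(A\) is smooth, it admits a smooth lift \(\tilde{X} = \Spec \tilde{A}\) over \(W_n(k)\) together with a lift \(\phi\colon \tilde{A}\to\tilde{A}\) of the absolute Frobenius of \(A\). The standard comparison between crystalline and de Rham cohomology of a smooth lift then gives a canonical isomorphism \(Ru_*\mathcal{O}_{X/W_n(k),\mathrm{crys}} \simeq \Omega^\bullet_{\tilde{X}/W_n(k)}\) in the derived category of \(X_{\mathrm{Zar}}\), reducing the theorem to producing a natural quasi-isomorphism \(\Omega^\bullet_{\tilde{X}/W_n(k)} \xrightarrow{\sim} W_n\Omega^\bullet_X\). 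The case \(n = 1\) is the base of the argument: there \(W_1\Omega^\bullet_X = \Omega^\bullet_{X/k}\), \(W_1(k) = k\), and the comparison above is the classical statement that crystalline cohomology over the base field computes de Rham cohomology.

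Next I would construct the comparison map. Using the Frobenius lift \(\phi\), one builds a ring homomorphism \(s_\phi\colon \tilde{A}\to W_n(A)\) whose ghost components are read off from the iterates of \(\phi\); by the universal property of Kähler differentials this extends to a morphism of differential graded algebras \(\Omega^\bullet_{\tilde{X}/W_n(k)} \to W_n\Omega^\bullet_X\), where on the right one uses Illusie's presentation of \(W_n\Omega^\bullet\) as a quotient of \(\Omega^\bullet_{W_n\mathcal{O}_X/W_n(\mathbb{F}_p)}\). A preliminary check, needed later for gluing, is that two different choices of \(\phi\) (and of \(\tilde{X}\)) yield chain-homotopic maps, so that the local comparisons are canonical in the derived category.

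The heart of the proof is to show this map is a quasi-isomorphism, which I would do by induction on \(n\). On \(W_n\Omega^\bullet\) I would use the filtration by the kernels of the restriction maps, together with the fundamental short exact sequence of complexes \(0 \to \mathrm{Fil}^{n-1}W_n\Omega^\bullet \to W_n\Omega^\bullet \xrightarrow{R} W_{n-1}\Omega^\bullet \to 0\), and match it with the analogous exact sequence on the crystalline side coming from the surjection \(W_n(k)\twoheadrightarrow W_{n-1}(k)\). By the inductive hypothesis it suffices to treat the top graded piece \(\mathrm{Fil}^{n-1}W_n\Omega^\bullet = V^{n-1}\Omega^\bullet_{X/k} + dV^{n-1}\Omega^{\bullet-1}_{X/k}\); using the relations \(FV = p\) and \(FdV = d\) one identifies it, via \(V^{n-1}\) and \(dV^{n-1}\), with an explicit complex assembled from \(\Omega^\bullet_{X/k}\) and its sheaves of cycles and boundaries under iterated Cartier operators, and then checks that the comparison induces a quasi-isomorphism there. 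The five lemma then upgrades the inductive statement from \(n-1\) to \(n\).

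The main obstacle is precisely this graded computation: the identification of \(\mathrm{Fil}^{n-1}W_n\Omega^\bullet\) together with its induced differential, and the verification that the map is an isomorphism on associated gradeds. This is where the Cartier isomorphism \(C^{-1}\colon \Omega^q_{X/k} \xrightarrow{\sim} \mathcal{H}^q(\Omega^\bullet_{X/k})\) enters decisively, since it controls the interaction of the Frobenius \(F\) with the differential through the fact that \(F\) lifts \(C^{-1}\). Once the graded pieces are matched, the independence-of-choices established above lets the local quasi-isomorphisms glue to the asserted isomorphism in the derived category of \(X_{\mathrm{Zar}}\).
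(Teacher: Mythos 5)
The paper gives no proof of this statement at all: it is quoted from Illusie (Chap.\ II, Thm 1.4), with only a remark comparing it to Bloch's earlier $K$-theoretic result. So the relevant comparison is with Illusie's own argument, and your outline does reproduce its skeleton faithfully: Berthelot's crystalline--de Rham comparison for a lift, a comparison map of dga's built from a Frobenius lift via Witt vectors, and an induction on $n$ matching $0\to \mathrm{Fil}^{n-1}W_n\Omega^\bullet\to W_n\Omega^\bullet\xrightarrow{R} W_{n-1}\Omega^\bullet\to 0$ against $0\to p^{n-1}\Omega^\bullet_{\tilde{X}}\to \Omega^\bullet_{\tilde{X}}\to \Omega^\bullet_{\tilde{X}\otimes W_{n-1}(k)}\to 0$, with the graded piece controlled by the iterated Cartier isomorphism (indeed $H^q(\mathrm{Fil}^{n-1}W_n\Omega^\bullet)\cong Z_{n-1}\Omega^q/B_{n-1}\Omega^q\cong \Omega^q$, where $Z_{n-1}$, $B_{n-1}$ denote the iterated cycle and boundary sheaves); you also correctly single this computation out as the heart of the matter. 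Two steps, however, fail as written.

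First, the construction of $s_\phi\colon \tilde{A}\to W_n(A)$ by ``reading off ghost components from the iterates of $\phi$'' is not available for a lift over $W_n(k)$: Cartier's (Dieudonn\'e--Dwork) lemma requires the source to be $p$-torsion free, whereas $p^n\tilde{A}=0$; worse, since $A$ is of characteristic $p$, the $i$-th ghost component of $W_n(A)$ collapses to $a_0^{p^i}$, so ghost components cannot characterize, let alone construct, a map \emph{into} $W_n(A)$. The repair---and what Illusie actually does---is to choose a smooth \emph{formal} lift $B$ of $A$ over $W(k)$ with a Frobenius lift (this exists because $A$ is smooth and affine), apply the lemma there to get $B\to W(A)$, and observe that the composite $B\to W_n(A)$ kills $p^nB$ (since $p^n=F^nV^n=0$ on $W_n(A)$), hence factors through $\tilde{A}:=B/p^nB$; this also supplies the compatible tower of lifts over all $W_m(k)$ that your matching of the two short exact sequences implicitly requires. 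Second, your globalization is too weak: knowing that different choices of $(\tilde{X},\phi)$ give chain-homotopic maps only yields agreement up to non-coherent homotopy on overlaps, and morphisms in $D(X_{\mathrm{Zar}})$ do not glue from such data---one needs coherent homotopies or a vanishing of the relevant negative $\Ext$ groups, neither of which you address. Illusie's proof avoids this entirely because the comparison map is defined globally through the crystalline formalism (the Witt thickening $X\hookrightarrow (\vert X\vert, W_n\mathcal{O}_X)$, with its canonical divided powers on $VW_{n-1}\mathcal{O}_X$, is a global object of the crystalline site); only the verification that the map is a quasi-isomorphism, which is legitimately local, uses the choices of lift and Frobenius.
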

We remark that  Bloch proved in \cite[Chap. III, Thm (2.1)]{Bloch} such an isomorphism in the limit over $n$ 
using his $K$-theoretic construction of the de Rham-Witt complex (as pro-object) under the additional assumption, that
$\dim X<p$ and $p\neq 2$. These assumptions were later removed by Kato, see \cite[p. 635, Rem 2]{Kato-LCFII}.

\begin{theorem}[{\cite[Chap I, Thm 4.1 and Chap II, Thm 2.2]{Eke}}]
Let \(X\) be a smooth scheme over \(k\) and let
\[\pi \colon W_n X = (\vert X \vert, W_n \mathcal{O}_X) \to \Spec W_n(k)\]
be the finite type morphism of schemes induced by the structure map of $X$. Then
\[\pi^!W_n(k) \cong W_n \Omega^{\dim X}_X[\dim X],\]
where $\pi^!$ denotes the exceptional inverse image in the derived category of $\mathcal{O}$-modules from Grothendieck duality.
Furthermore, there is a canonical isomorphism
\[W_n \Omega^j_X \overset{\simeq}{\longrightarrow} R \mathcal{H}om_{W_n\mathcal{O}_X}(W_n \Omega^{\dim X-j}_X, W_n \Omega^{\dim X}_X).\]
\end{theorem}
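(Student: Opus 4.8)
Write $d := \dim X$ and $D_n := \pi^! W_n(k)$ for the value of the twisted inverse image of $\pi = \pi_n \colon W_n X \to \Spec W_n(k)$ on the structure sheaf. The plan is to establish the first isomorphism by induction on $n$, simultaneously showing that $D_n$ is a single coherent sheaf placed in degree $-d$ (so that $W_n X$ is relatively Cohen--Macaulay of dimension $d$), and then to deduce the second isomorphism from Grothendieck duality once $W_n\Omega^d_X$ has been identified with the relative dualizing sheaf.

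For the base case $n = 1$ we have $W_1 X = X$ smooth over $k = W_1(k)$, so smooth duality gives $D_1 = \Omega^d_{X/k}[d] = W_1\Omega^d_X[d]$, concentrated in degree $-d$. For the inductive step I would exploit the short exact sequence of $W_n\mathcal{O}_X$-modules
\[0 \to (F^{n-1})_* \mathcal{O}_X \xrightarrow{V^{n-1}} W_n\mathcal{O}_X \xrightarrow{R} W_{n-1}\mathcal{O}_X \to 0,\]
which exhibits $W_{n-1}X$ and the Frobenius-twisted reduced scheme $X$ as complementary closed data inside $W_n X$. Applying $R\mathcal{H}om_{W_n\mathcal{O}_X}(-, D_n)$ produces an exact triangle $\iota^! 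D_n \to D_n \to i^! D_n \xrightarrow{+1}$ associated to the closed immersions $\iota \colon W_{n-1}X \hookrightarrow W_n X$ and $i \colon X \hookrightarrow W_n X$. Since $W_n(k) = W(k)/p^n$ is an Artinian Gorenstein, hence self-injective, local ring, the base closed immersions $\rho \colon \Spec W_{n-1}(k) \hookrightarrow \Spec W_n(k)$ and $\sigma \colon \Spec k \hookrightarrow \Spec W_n(k)$ satisfy $\rho^! W_n(k) \cong W_{n-1}(k)$ and $\sigma^! W_n(k) \cong k$ in degree $0$ (the annihilator, resp. the socle). Using $\pi_n \circ \iota = \rho \circ \pi_{n-1}$ one gets $\iota^! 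D_n \cong D_{n-1} \cong W_{n-1}\Omega^d_X[d]$ by the inductive hypothesis, and likewise $i^! D_n$ is a Frobenius twist of $\Omega^d_X[d]$. The long exact cohomology sequence of the triangle then forces $D_n$ to be concentrated in degree $-d$ and yields a short exact sequence $0 \to W_{n-1}\Omega^d_X \to \mathcal{H}^{-d}(D_n) \to (F^{n-1})_*\Omega^d_X \to 0$.

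The crux is to identify $\mathcal{H}^{-d}(D_n)$ canonically with $W_n\Omega^d_X$. For this I would match the sequence just obtained with the fundamental exact sequence of the de Rham--Witt complex in top degree having $W_{n-1}\Omega^d_X$ as a subsheaf and a Cartier/Frobenius twist of $\Omega^d_X$ as quotient, dual to the restriction sequence with quotient $W_{n-1}\Omega^d_X$. The comparison map is built from the multiplicative structure, so it is canonical, and one checks by the five lemma and the inductive identification of the outer terms that it is an isomorphism. The main obstacle here is bookkeeping: because $W_n\mathcal{O}_X$ is highly non-reduced and $W_n\Omega^d_X$ is not locally free, the identification cannot be made generically, and every Frobenius/inverse-Cartier twist occurring in the two extensions must be tracked and shown to agree. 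The conceptual point that makes this possible is that the inverse Cartier operator $C^{-1}$ renders the relevant graded pieces self-dual. To make the reductions rigorous one passes to the local model: since $W_n$ preserves \'etale morphisms, the statement is \'etale-local on $X$ and reduces to $X = \A^d_k$, where $W_n\Omega^*_X$ admits an explicit basis.

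Granting the first isomorphism, $W_n\Omega^d_X[d] = \pi^! W_n(k)$ is a dualizing complex on $W_n X$, because $W_n(k)$ is Gorenstein and $\pi^!$ preserves dualizing complexes; hence $\mathbb{D}(-) := R\mathcal{H}om_{W_n\mathcal{O}_X}(-, W_n\Omega^d_X[d])$ is a duality involution on $D^b_{\mathrm{coh}}(W_n\mathcal{O}_X)$. The second isomorphism is the assertion that the wedge product $W_n\Omega^j_X \otimes_{W_n\mathcal{O}_X} W_n\Omega^{d-j}_X \to W_n\Omega^d_X$ coming from the differential graded algebra structure is a perfect pairing, equivalently that $\mathbb{D}(W_n\Omega^{d-j}_X) \cong W_n\Omega^j_X[d]$. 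This map is manifestly canonical, and for $n=1$ it is the classical perfect pairing on the exterior powers of the rank-$d$ bundle $\Omega^1_{X/k}$. For general $n$ I would argue by d\'evissage along the same restriction and Verschiebung exact sequences, applying the involution $\mathbb{D}$ (which interchanges $j \leftrightarrow d-j$ and the sub/quotient roles) and concluding by induction on $n$, together with the vanishing of the higher sheaf $\mathcal{E}xt$'s. This last vanishing, checked again on the local model $\A^d$, is what upgrades the perfect pairing into an isomorphism of sheaves concentrated in degree $0$.
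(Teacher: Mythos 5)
Note first that the paper does not prove this statement at all: it is quoted directly from Ekedahl \cite[Chap I, Thm 4.1 and Chap II, Thm 2.2]{Eke}, so your attempt can only be measured against Ekedahl's own argument. Your scaffolding is genuinely close to his: reduction to the local model $\A^d_k$ via the compatibility of $W_n$ with \'etale maps, the Gorenstein (self-injective) structure of $W_n(k)$, d\'evissage of $W_n\mathcal{O}_X$ along the $V$-filtration, and Cartier-type self-duality of the graded pieces are all ingredients of the real proof. Your computation of the outer terms of the duality triangle is also correct: $\iota^!D_n \cong W_{n-1}\Omega^d_X[d]$ by induction, and the dual of the ideal $V^{n-1}\mathcal{O}_X \cong i_*(F^{n-1}_X)_*\mathcal{O}_X$ is $i_*(F^{n-1}_X)_*\Omega^d_X[d]$ by duality for the finite flat Frobenius, so $\mathcal{H}^{-d}(D_n)$ is an extension of $(F^{n-1}_X)_*\Omega^d_X$ by $W_{n-1}\Omega^d_X$.

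The genuine gap sits exactly at what you call the crux, and it is twofold. First, knowing that $\mathcal{H}^{-d}(D_n)$ and $W_n\Omega^d_X$ are both extensions of $(F^{n-1}_X)_*\Omega^d_X$ by $W_{n-1}\Omega^d_X$ produces no map between them: such extensions are classified by a typically enormous $\mathcal{E}xt^1_{W_n\mathcal{O}_X}\bigl((F^{n-1}_X)_*\Omega^d_X,\, W_{n-1}\Omega^d_X\bigr)$, need not be isomorphic, and the five lemma cannot be invoked before a comparison map exists; ``built from the multiplicative structure'' is not a construction, since the product on $W_n\Omega^*_X$ gives no morphism into the abstractly defined $\pi^!W_n(k)$. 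Producing this map (a trace/residue morphism, constructed e.g.\ by exhibiting $W_n(k[t_1,\ldots,t_d])$ as a finite module over the polynomial subring $W_n(k)[[t_1],\ldots,[t_d]]$ on Teichm\"uller lifts and computing the relative dualizing module explicitly, compatibly with $F$, $V$, $d$, $R$) is precisely the content of Ekedahl's theorem, and it is absent from your sketch. Second, the ``fundamental exact sequence of the de Rham--Witt complex in top degree'' that you want to match against is itself unproven, and the maps you implicitly have in mind fail: the Verschiebung cannot be the inclusion, since $Vd = p\,dV$ forces $d\Omega^{d-1}_X \subset \ker V$, so $V$ is not injective in top degree; and the restriction $R^{n-1}$ cannot be the surjection, since its cokernel is $\Omega^d_X$ with the $R$-twisted module structure (generic rank $1$), which cannot match the Frobenius pushforward $(F^{n-1}_X)_*\Omega^d_X$ (generic rank $p^{d(n-1)}$) coming out of the duality computation. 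The correct sequence is $0 \to W_{n-1}\Omega^d_X \xrightarrow{\underline{p}} W_n\Omega^d_X \xrightarrow{F^{n-1}} (F^{n-1}_X)_*\Omega^d_X \to 0$, where $\underline{p}$ is the $R$-linear ``division of multiplication by $p$'' of Ekedahl and Illusie--Raynaud; identifying these maps and proving exactness is a substantive de Rham--Witt computation that your argument presupposes. The same missing input undermines your treatment of the pairing statement, whose d\'evissage in middle degrees additionally requires the perfect Cartier pairings between the sheaves $\Omega^j/B_m$ and $Z_m\Omega^{d-j}$, which is where the remaining work of Ekedahl's Chapter II lies.
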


Using the  above theorem, Gros constructed in \cite{Gros} a  pushforward 
\[f_* \colon Rf_* W_n \Omega^j_Y \longrightarrow W_n \Omega^{j-r}_X[-r],\]
for a proper morphism \(f \colon Y \to X\) of relative dimension $r$ between  smooth schemes.

\begin{proposition}\label{prop:deRhamWittReciprocity}
The de Rham--Witt sheaves are reciprocity sheaves, i.e., 
\[W_n \Omega^j \in \RSC_{\Nis}\]
for all \(j \geq 0\). Furthermore, the maps $d$, $R$, $F$, $V$ are compatible with the transfer structure and hence  are morphisms of reciprocity sheaves.
\end{proposition}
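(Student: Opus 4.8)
The plan is to follow the template of the proof of \cref{prop:KahlerReciprocity}: first equip $W_n\Omega^j$ with an action of finite correspondences, then verify the modulus condition of \cref{def:ModulusCondition} by a residue computation on curves, and finally check that $d,R,F,V$ commute with the transfers. The de Rham--Witt sheaves are Nisnevich (indeed Zariski) sheaves by construction, since they are built from the quasi-coherent sheaves $W_n\mathcal{O}$ and commute with \'etale localization, so the only nontrivial point for membership in $\textbf{NST}$ is the transfer structure. As in the Kähler case this is supplied by the trace formalism built on Grothendieck--Ekedahl duality: the isomorphisms $\pi^!W_n(k)\cong W_n\Omega^{\dim X}_X[\dim X]$ and $W_n\Omega^j_X\cong R\mathcal{H}om_{W_n\mathcal{O}_X}(W_n\Omega^{\dim X-j}_X,W_n\Omega^{\dim X}_X)$ of \cite{Eke} make $W_n\Omega^\bullet$ a dualizing object, and the induced Gros pushforward \cite{Gros} along the finite map $V^N\to X$, composed with the pullback $W_n\Omega^j(Y)\to W_n\Omega^j(V^N)$ along $V^N\to Y$, defines $\gamma^\ast$ for a prime correspondence $V$. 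The residue/trace package of \cite{CR11} provides exactly this and moreover guarantees that the assignment respects composition of correspondences, which is the content making $W_n\Omega^j$ a presheaf with transfers. A mild point is that $V^N$ may be singular, but the duality and trace theory are available for the finite-type $W_n$-schemes $V^N$, so the construction goes through.

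For the modulus condition I would argue exactly as in \cref{prop:KahlerReciprocity}. Given $a\in W_n\Omega^j(U)$ I choose a proper modulus pair $(X,D)$ controlling the poles of $a$, and using injectivity of $W_n\Omega^j_S\to W_n\Omega^j_{k(S)}$ for integral smooth $S$, reduce to the case of a regular projective curve $C$ over a $k$-function field $K$ mapping to $X$ with image not contained in $D$. The required vanishing ${\rm div}_C(f)^\ast a=0$, for $f\equiv 1 \bmod N\cdot D_C$, is then rewritten through a Witt residue $\text{Res}_x\colon W_n\Omega^{j+1}_{K(C)}\to W_n\Omega^j_K$ as the reciprocity law $\sum_{x\in C}\text{Res}_x(a_{|C}\,d\log f)=0$. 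Away from $D_C$ the pullback of $a$ is regular and $\text{Res}_x(a_{|C}\,d\log f)=v_x(f)\,\tr_{K(x)/K}(a(x))$, which recovers the Rosenlicht--Serre sum, while the terms supported on $D_C$ vanish because $f$ is congruent to $1$ to an order exceeding the pole order of $a$.

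For the compatibility statement, each of $d,R,F,V$ is a morphism of the pro-differential graded algebra $W_\bullet\Omega^\ast$, and the trace maps underlying the transfers are compatible with them. Concretely, $R$ and $d$ commute with the Gros pushforward since it is induced on the cohomology of the dualizing complex, and $F,V$ are compatible by the projection formula $V(\alpha)\cdot\beta=V(\alpha\cdot F(\beta))$ together with the $F$-linearity of the trace; this compatibility is part of the construction in \cite{CR11}. Consequently each operation is a morphism in $\PST$, and since its source and target lie in $\RSC_{\Nis}$ by the previous steps, it is a morphism of reciprocity sheaves.

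The main obstacle is the modulus step, and specifically the vanishing of the Witt residues along $D_C$. Unlike the Kähler case, where modulus $2D$ suffices, the local structure of $W_n\Omega^j$ near $D$ is governed by the Verschiebung and involves $p$-power denominators, so one must (i) establish a de Rham--Witt residue theory together with its reciprocity law on projective curves, and (ii) determine a multiplicity $N=N(n)$ for which $f\equiv 1 \bmod N\cdot D_C$ forces $\text{Res}_x(a_{|C}\,d\log f)=0$ at the points $x\in D_C$. Controlling this $p$-adic pole order, uniformly in the level $n$, is the genuinely new analytic input beyond \cref{prop:KahlerReciprocity}.
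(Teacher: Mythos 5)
Your overall outline (transfers, then modulus via a residue computation on curves, then compatibility of $d,R,F,V$) matches the paper's, but the way you construct the transfers is not the paper's construction, and as described it has a genuine gap. The paper invokes \cite[Theorem 3.4.6]{CR12} (note: \cite{CR11} is what handles the K\"ahler case), where the action of a prime correspondence $Z\in\Cor(X,Y)$ is defined entirely on \emph{smooth} schemes: pull back along $p_Y$ to $X\times Y$, cup with the cycle class of $Z$ in cohomology \emph{with support} $H^{\dim Y}_{Z}(X\times Y, W_n\Omega^{j+\dim Y})$, then push forward with supports along the smooth projection $p_X$ (the support condition, $Z$ finite over $X$, is what makes this pushforward exist even though $Y$ is not proper). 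Your construction instead pulls back to $V^N$ and applies a trace along the finite map $V^N\to X$. This requires de Rham--Witt duality and a trace map $W_n\Omega^j(V^N)\to W_n\Omega^j(X)$ on the possibly singular scheme $V^N$, and that is not a ``mild point'': Ekedahl's isomorphisms $\pi^!W_n(k)\cong W_n\Omega^{\dim X}_X[\dim X]$ and $W_n\Omega^j_X\cong R\mathcal{H}om_{W_n\mathcal{O}_X}(W_n\Omega^{\dim X-j}_X,W_n\Omega^{\dim X}_X)$ are theorems about smooth $k$-schemes; on a singular $V^N$ the dualizing complex is not identified with de Rham--Witt forms, and no trace with the required compatibilities (with $d$, $R$, $F$, $V$, and with composition of correspondences) is available there. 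Avoiding exactly this difficulty is the reason the construction goes through cohomology with supports on smooth ambient schemes. Moreover, even in that framework the compatibility of the pushforward with $d$, $R$, $F$, $V$ is not formal: it rests on Ekedahl's analysis of how these operators behave under duality \cite[Chap III]{Eke}.

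On the modulus condition, you have correctly identified the shape of the argument (a Witt-differential residue theory with a reciprocity law on projective curves, plus a bound $N=N(n)$ on the multiplicity so that $f\equiv 1 \bmod N\cdot D_C$ kills the residues along $D_C$), but you then explicitly leave both inputs unestablished and call them the main obstacle. Those inputs are precisely the content of the proof: the paper delegates them to \cite[Thm B.2.2]{BSY}, where the residue theory for Witt differentials and the required multiplicity are worked out. As written, your treatment of the modulus step is a plan for a proof rather than a proof, so the argument is incomplete at exactly the two points (transfers on $W_n\Omega^j$, and the modulus bound) where the de Rham--Witt case genuinely differs from \cref{prop:KahlerReciprocity}.
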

\begin{proof}
The finite transfers structure on $W_n\Omega^j$ and its compatibility with $d$, $R$, $V$, $F$ 
is a consequence of  \cite[Theorem 3.4.6]{CR12}. We recall the definition:
For \(Z \in \Cor(X, Y)\) the correspondence action is given by the composition
\begin{align*}
Z^* \colon W_n\Omega^j(Y) &\overset{p^*_Y}{\longrightarrow} W_n \Omega^j(X \times Y) \\
                          &\overset{\cup {\rm cl}_Z}{\longrightarrow} H^{\dim Y}_{Z}(X \times Y, W_n \Omega^{j+\dim Y}) \\
                          &\overset{p_{X*}}{\longrightarrow} W_n \Omega^j(X),
\end{align*}
where ${\rm cl}_Z$ denotes the cycle class and $p_{X*}$ denotes the pushforward with supports from \cite[2.3]{CR12}.
Note that since $Y$ does not need to be proper, this pushforward only exists with support in the finite $X$-scheme $Z$.
We also want to point out the compatibility of the pushfoward, and hence the correspondence action with $d$, $R$, $F$, $V$, is not obvious
and requires Ekedahl's careful analysis of the behaviour of this maps under duality, see \cite[Chap III]{Eke}.
We find \(W_n \Omega^j \in \textbf{NST}.\)
It remains to show that any form \(a \in W_n \Omega^j(X)\) has a modulus. This is similar to the case of Kähler differentials in the proof of \cref{prop:KahlerReciprocity}, see \cite[Thm B.2.2]{BSY} for details.
\end{proof}

Since $\RSC_{\Nis}$ is an abelian category (see Corollary \ref{cor:RSC-ab}),  \Cref{prop:deRhamWittReciprocity} 
gives us many more examples of reciprocity sheaves by taking kernels and quotients of the maps $d$, $R$, $F$, $V$.
In particular we obtain:
\begin{enumerate}
    \item \(W_n \Omega^* \in \text{Comp}^+(\RSC_{\Nis})\) represents the complex of sheaves sending a smooth scheme \(X\) to \(Ru_*\mathcal{O}_{X/W_n, \text{crys}}\).
    \item  We have 
    \begin{equation}\label{eq:Binfty}
        B^\infty W_n \Omega^j := \bigcup_{r\geq 0} F^r d W_{n+r}\Omega^{j-1} \in \RSC_{\Nis}.
    \end{equation}
    \item The \textit{generalized} Artin--Schreier--Witt sequence  is the exact sequence
    \[0 \to W_n \Omega^j_{X, \log} \to W_n \Omega^j_{X}/B_{\infty} \overset{\overline{F}-1}{\longrightarrow} W_n \Omega^j_X/B_{\infty} \to 0\]
    on \(X_{\et}\), where $W_n \Omega^j_{X, \log}$ is the subsheaf of $W_n\Omega^j_X$ which locally is generated by ${\rm dlog}$-forms 
    (see \cite[Chap I, 5.7]{Illusie}), $B^\infty = B^\infty W_n \Omega^j$, 
    and $\overline{F}: W_n \Omega^j_{X}/B_{\infty}\to W_n \Omega^j_{X}/B_{\infty}$ is induced by 
    "lifting to level $n+1$ and apply  $F:W_{n+1}\Omega^j_X\to W_n\Omega^j_X$" (this operation is well-defined on the quotient 
    modulo $B_\infty$). The exactness of this sequence can be deduced from \cite[\S 1, Lem 2]{CSS} and 
    \cite[I, (3.21.1.1), (3.21.1.3) and Prop 3.26 and its proof]{Illusie}.
    By a famous theorem of Geisser-Levine \cite{GL} we have 
    \[ W_n \Omega^j_{X, \log}[-j]\cong \Z/p^n(j)_{X_{\et}},\]
    where the right hand side denotes the \'etale motivic complex of weight $j$ with $\Z/p^n$-coefficients.
    Let  \(\epsilon \colon Sm_{\et} \to Sm_{\Nis}\) be  the change of sites map. Since $W_n \Omega^j_{X}/B_{\infty}$ is a direct limit of
    sheaves which are successive extensions of coherent $\mathcal{O}$-modules, it is acyclic for $R\epsilon_*$.
    Thus we obtain
    \begin{equation}\label{eq:ASW}
    R\epsilon_*\Z/p^n(j) \cong \left(W_n \Omega^j /B_{\infty} \overset{\overline{F}-1}{\longrightarrow} W_n \Omega^j/B_{\infty}\right)[-j] \in D^b(\RSC_{\Nis}).
    \end{equation}
    \item Moreover, by a result of Voevodsky the prime-to-$p$-part of \(R^i\epsilon_*(\mathbb{Q}/\Z(j))\) is homotopy invariant, combining 
    this with the above  yields
    \[R^i\epsilon_*(\mathbb{Q}/\Z(j))\in \RSC_{\Nis},\quad  \text{for all } i, j.\]
    By the above  this is not homotopy invariant  only for $i=j+1$.
    In particular, the Brauer group defines a reciprocity sheaf:
    \[X \mapsto \text{Br}(X) = H^0(X, R^2\epsilon_*(\mathbb{Q}/\Z(1))) \in \RSC_{\Nis}.\]
\end{enumerate}

\section{Computation of the modulus in examples}
In this section we give some computations of the modulus in certain examples. We will see that the modulus detects higher poles and ramifications, which is not captured by the classical \(\A^1\)-invariant theory. 

We let \(L\) be a henselian discrete valuation field of geometric type over the perfect base field \(k\), i.e.,
\[L = \text{Frac }\mathcal{O}^h_{U, x},\]
where \(U\) is a smooth $k$-scheme and \(x \in U^{(1)}\).

For \(F \in \RSC_{\Nis}\) we let \(F(L) := F(\Spec L)\) and \[\tilde{F}(\mathcal{O}_L, \mathfrak{m}^{-n}):= \tilde{F}(\Spec \mathcal{O}_L, n \cdot \{\text{closed pt}\}).\]
By \cite[Thm 4.15(4)]{RS1} we obtain for any proper modulus pair $(X,D)$ that \(\tilde{F}(X, D)\) equals
\begin{multline}\label{eq:Comp-Mod1}
\left\{ 
a \in F(X \setminus D ) \hspace{2mm}\Bigg\vert
\rho^*a \in  \tilde{F}\left(\mathcal{O}_L, \mathfrak{m}_L^{-v_L(\rho^*D)}\right) \quad
\forall L\quad \forall \rho  \in  (X \setminus  D )(L)
\right\}.
\end{multline}
In order to understand the modulus sheaf \(\tilde{F}\) we have to study the filtration
\[F(\mathcal{O}_L) \subset \tilde{F}(\mathcal{O}_L, \mathfrak{m}_L^{-1}) \subset \hdots \subset \tilde{F}(\mathcal{O}_L, \mathfrak{m}_L^{-n}) \subset \hdots \subset F(L)\]
for all \(L\). For \(\A^1\)-invariant Nisnevich sheaves we have $\tilde{F}(\mathcal{O}_L, \mathfrak{m}_L^{-1})=F(L)$.
For a non-$\A^1$-invariant reciprocity sheaf this is an exhaustive increasing filtration, which for varying $L$ is infinite, in the sense
that there exists no natural number $n\ge 0$  such that $F(L)$ is equal to $\tilde{F}(\mathcal{O}_L,\mathfrak{m}^{-n})$, for all $L$, 
see \cite[Lem 5.2]{RSY}.

\begin{definition}\label{defn:level}
The reciprocity sheaf $F$ has {\em level $n\ge 0$}, 
if for any smooth $k$-scheme $X$ and any $a\in F(\A^1\times X)$ the following implication holds:
\[a_{\A^1_z}\in F(z)\subset F(\A^1_z), \quad \text{for all } z\in X_{(\le n-1)}  \Longrightarrow a\in F(X)\subset F(\A^1\times X),\]
where $a_{\A^1_z}$ denotes the restriction of $a$ to $\A^1_z$ and 
$X_{(\le n-1)}$ denotes the set of points in $X$ whose closure has dimension $\le n-1$\footnote{This is equivalent to the motivic conductor of $F$ having level $n$ in the language of \cite{RS1}.}.
\end{definition}
Clearly, $\A^1$-invariant sheaves have level 0.
Any commutative algebraic group \(G\) over \(k\) has level 1 by \cite[Theorem 5.2]{RS1}.
If $F$ has level $n$ it suffices to consider in \eqref{eq:Comp-Mod1} those $L$ which have transcendence degree $\le n$ over $k$.
For example, if the level is $n=1$, this can be interpreted as a {\em cut-by-curves-criterion} for determining the modulus of an element
$a\in F(U)$. If the level is $n=2$ we have a {\em cut-by-surfaces-criterion } etc.

\subsection{Differential forms and rank 1 connections}

\begin{theorem}[{\cite[Chap. 6]{RS1}},{\cite[Chap. 6]{RS-AS}}]\label{thm:Omega}
Let \(\text{char } k = p\ge 0\) and $j\ge 1$.
The modulus sheaf \(\tilde{\Omega^j_{/\Z}}\) has level \(j+1\) and
\[\tilde{\Omega^j_{/\Z}}(\mathcal{O}_L, \mathfrak{m}_L^{-n}) = 
\begin{cases}
\frac{1}{t^{n-1}} \cdot \Omega^j_{\mathcal{O}_L/\Z}(\log t) & \text{if } p=0 \text{ or } (n,p)=1\\
\frac{1}{t^{n}} \cdot \Omega^j_{\mathcal{O}_L/\Z} &\text{if } p>0 \text{ and } p|n,
\end{cases}
\]
where \(t\in\mathfrak{m}_L\) is a local parameter.

Moreover, if $p=0$ and \(\text{Conn}^1(X)\) (resp. \(\text{Conn}^1_{\text{int}}(X)\)) 
denotes the group of isomorphism classes of  (resp. integrable) rank 1 connections on \(X/k\), 
we have
\begin{itemize}
    \item \(\text{Conn}^1\in \RSC_{\Nis}\), has level 2 and  \(\text{Conn}^1_{\text{int}}(X) \in \RSC_{\Nis}\) has level 1.
    \item  \(\tilde{\text{Conn}^1_{\text{int}}}(X, D)\) is the isomorphism classes of integrable rank 1 connections on \(U=X\setminus D\) whose non-log irregularity
    \footnote{The non-log irregularity of a rank one connection $E$ on $\Spec L$ 
    is zero if this connection extends to $\Spec \mathcal{O}_L$ and 
    else is equal to the ${\rm irr}(E)+1$, where ${\rm irr}(E)$ denotes the usual irregularity.}
    is bounded by \(D\).
    \item $h^0_{\A^1}(\text{Conn}^1_{\text{int}})(U)$ is the regular singular rank 1 connections on $U$, where $h^0_{\A^1}(F)$ denotes the 
    maximal $\A^1$-invariant subsheaf of $F$. 
\end{itemize}
\end{theorem}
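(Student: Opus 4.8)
The plan is to reduce everything to a local computation at henselian discrete valuation fields and then to carry out this computation by a residue argument refining the proof of \Cref{prop:KahlerReciprocity}. By the formula \eqref{eq:Comp-Mod1}, the modulus presheaf $\tilde{\Omega^j_{/\Z}}$ on an arbitrary proper modulus pair is determined by the filtration $\tilde{\Omega^j_{/\Z}}(\mathcal{O}_L, \mathfrak{m}_L^{-n})$ as $L$ ranges over all henselian DVRs of geometric type. Thus the first task is to identify this filtration with the claimed right-hand side, and the second is the level statement, which (via the remark that for a sheaf of level $\le j+1$ it suffices to test \eqref{eq:Comp-Mod1} on $L$ of transcendence degree $\le j+1$) both refines the local picture and makes the global formula effective.

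First I would prove the local formula by establishing two inclusions. For the upper bound, I take a form $a$ lying in $\frac{1}{t^{n-1}}\Omega^j_{\mathcal{O}_L/\Z}(\log t)$ (if $p\nmid n$) or in $\frac{1}{t^{n}}\Omega^j_{\mathcal{O}_L/\Z}$ (if $p\mid n$) and verify the modulus condition $\mathrm{Res}_x(a\,\mathrm{dlog}(f)) = 0$ for every $f\equiv 1 \bmod t^n$, exactly as in \Cref{prop:KahlerReciprocity}; the residue vanishing is a direct computation once $a$ and $f$ are written in terms of the uniformiser $t$. For the lower bound I would argue contrapositively: given a form with a strictly larger pole than allowed, I would exhibit an explicit unit $f \equiv 1 \bmod t^n$ for which $\mathrm{Res}_x(a\,\mathrm{dlog}(f))\neq 0$, so that $a$ cannot have modulus $n$. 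The two bounds together pin down each filtration step.

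The main obstacle is the characteristic-$p$ dichotomy, i.e.\ the different answer according to whether $p\mid n$ or not. This is governed entirely by the leading behaviour of $\mathrm{dlog}(f)$ for $f = 1 + c\,t^{n} + \cdots$: one has $\mathrm{dlog}(f) = (n c\, t^{n-1} + \cdots)\,dt$, whose leading term $n c\,t^{n-1}\,dt$ \emph{vanishes} precisely when $p\mid n$. When $p\nmid n$ this term dominates the residue pairing and forces $a$ to have at worst a logarithmic pole scaled by $t^{-(n-1)}$; when $p\mid n$ the degeneration of the leading term lets $a$ carry a genuine pole of order $n$ without violating reciprocity. Making this precise — keeping track of all lower-order terms of $\mathrm{dlog}(f)$ and of the full module structure of $\Omega^j_{\mathcal{O}_L/\Z}$ over the possibly imperfect residue field — is the delicate heart of the argument and is where the care of \cite{RS1} and \cite{RS-AS} is needed.

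For the level statement I would use \Cref{defn:level} directly: writing a form $a\in\Omega^j(\A^1\times X)$ in terms of the coordinate $t$ on $\A^1$ and differentials coming from $X$, the hypothesis that every restriction $a_{\A^1_z}$ is constant for all $z$ with $\dim\overline{\{z\}}\le j$ forces the vanishing of all $t$-dependent coefficients and of all components containing $dt$; the bound $j+1$ reflects that a $j$-form is already witnessed on fibres over points whose residue field carries $j$ independent differentials, and its sharpness is checked on an explicit non-constant form. Finally, for the rank $1$ connections in characteristic $0$ I would pass to the connection form: locally a rank $1$ connection is $\nabla = d + \omega$ with $\omega\in\Omega^1$, an isomorphism changes $\omega$ by $\mathrm{dlog}$ of a unit, and integrability is the closedness $d\omega = 0$. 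This exhibits $\text{Conn}^1$ and $\text{Conn}^1_{\mathrm{int}}$ as built from $\Omega^1$, $\mathcal{O}^\times$ and closed forms by kernels, cokernels and extensions, so that membership in $\RSC_{\Nis}$ follows from \Cref{prop:KahlerReciprocity} together with the fact that $\RSC_{\Nis}$ is abelian (\Cref{cor:RSC-ab}). The non-log irregularity is then read off from the pole order of $\omega$, so the $j=1$ case of the local formula (with the $\log t$-normalisation accounting for the ``$+1$'' in the definition of non-log irregularity) identifies $\tilde{\text{Conn}^1_{\mathrm{int}}}(X,D)$ with the connections of irregularity bounded by $D$ and identifies $h^0_{\A^1}(\text{Conn}^1_{\mathrm{int}})$ with the regular-singular ones; the levels then drop out, namely $2$ for $\text{Conn}^1$ (matching $\tilde\Omega^1$) and $1$ for $\text{Conn}^1_{\mathrm{int}}$, since the closedness constraint $d\omega=0$ rigidifies an integrable connection enough to be detected already on curves.
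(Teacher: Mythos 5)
First, a point of comparison that matters for this review: the paper does not prove \cref{thm:Omega} at all --- it is a survey statement quoted from \cite{RS1} and \cite{RS-AS} --- so your proposal can only be measured against the strategy of those references and against the internal evidence the survey gives about them. Your characteristic-zero skeleton does match that strategy in spirit: the reduction to the local filtration via \eqref{eq:Comp-Mod1}, the Rosenlicht--Serre-type residue computation for the containment of the claimed modules in $\tilde{\Omega^j_{/\Z}}(\mathcal{O}_L,\mathfrak{m}_L^{-n})$ (as in \cref{prop:KahlerReciprocity}), the coordinate analysis for the level, and the identification $\text{Conn}^1\cong \Omega^1_{/k}/{\rm dlog}\,\mathcal{O}^\times$ (with $\text{Conn}^1_{\rm int}$ the kernel of $d$ on it), so that membership in $\RSC_{\Nis}$ follows from \cref{cor:RSC-ab}, are all sound and are essentially how the cited proof begins.

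There are, however, three genuine gaps. (i) Your sharpness step is not a proof scheme: a single nonzero residue $\mathrm{Res}_x(a\,{\rm dlog}(f))\neq 0$ at the closed point of $\Spec \mathcal{O}_L$ does not violate the modulus condition, because the condition constrains $\mathrm{div}(f)^*a$ on \emph{proper} curves over function fields, which by the residue theorem equals minus the \emph{sum} of residues at all points over the divisor; one must construct global test curves over fields $K$ of controlled transcendence degree on which this total sum is nonzero. This also shows that the level statement cannot be proven ``finally'' after the local formula, as in your ordering: the cut-by-curves reduction and the local computation have to be established together (this is exactly the motivic-conductor axiomatics of \cite{RS1}). (ii) In positive characteristic your leading-term analysis of ${\rm dlog}(f)$ is not merely ``delicate bookkeeping'' left to the references: the survey itself states (in its final section, after \cref{thm:AS2}) that the characteristic-form computations of \cite{RS-AS} --- i.e.\ the Abbes--Saito dilatation method resting on higher local symbols, cf.\ \cref{thm:AS} --- ``are also used to prove the formula in Theorem \ref{thm:Omega} in positive characteristic''. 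So for $p>0$ the equality (as opposed to one inclusion) requires an entirely different method that your plan does not contain. (iii) For connections, the step ``the non-log irregularity is then read off from the pole order of $\omega$, so the $j=1$ case of the local formula identifies $\tilde{\text{Conn}^1_{\rm int}}(X,D)$'' is not formal: the functor $F\mapsto \tilde{F}$ (i.e.\ $\underline{\omega}^{\CI}$) is not right exact --- the paper says so explicitly in its section on tensor structures --- so the modulus filtration of the quotient sheaf $\Omega^1/{\rm dlog}\,\mathcal{O}^\times$ is not simply the image of the filtration of $\Omega^1$; for instance a class like $t^{-n}dc$ (with $dc$ ``horizontal'') has pole order $n$ but does not lie in the image of $t^{-(n-1)}\Omega^1_{\mathcal{O}_L/\Z}(\log t)$. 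Turning pole orders into (non-log) irregularity requires the structure theory of rank-one connections over $L$ (splitting off exact and regular-singular parts) and a separate comparison of conductors, and likewise the levels $2$ and $1$ do not ``drop out'' but need their own arguments.
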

Using the above formulas and the birational invariance of $\widetilde{\Omega^j_{/\Z}}$ 
(see \eqref{CorKMSY} with $i=0$) it is shown in  \cite[Cor 7.3]{RS-AS} 
(see also \cite[Thm 7.1]{RS-ZNP}) that 
an integral normal Cohen-Macaulay scheme $Y$ of dimension $d$ and of finite type over $k$ has 
{\em pseudo-rational singularities} if for each effective Cartier divisor $R$, such that 
$Y\setminus R$ is smooth, the sheaf $\widetilde{\Omega^d}_{(Y,R)}$ is $S2$, i.e., is completely
determined by its stalks at the zero- and one-codimensional points of $Y$.
Note that in view of \eqref{eq:Comp-Mod1}, this condition can be rephrased as a condition 
on the local filtrations $\widetilde{\Omega^d}(\mathcal{O}_L, \mathfrak{m}^{-n})$
for various $L$ mapping to $Y$.

\subsection{Witt vectors and characters of the abelianized fundamental group}
Let \({\text{char}(k)}=p >0\).
In order to define the Albanese with modulus in higher dimension, Kato--Russel
defined in \cite{KR} the filtration
\[\fil^F_r W_n(L) := \sum_{j \geq 0} F^j\left( \fil^{\log}_{r-1}W_n(L) + V^{n-s}(\fil^{\log}_{r}W_s(L))\right),\quad r\ge 0,\]
where \(\fil^{\log}_r W_n(L) = \{(a_0, \hdots, a_{n-1}) \mid p^{n-1-i} v_L(a_i)\geq -r, \text{ for all } i\}\), and \(s = \min\{n, \text{ord}_p(r)\}\). 

\begin{theorem}[{\cite[Theorem 7.20]{RS1}}]\label{thm:Wn}
The Witt sheaf \(W_n\) has level 1, and 
\[\tilde{W_n}(\mathcal{O}_L, \mathfrak{m}_L^{-r}) = \fil^F_r W_n(L).\]
In particular, 
\[\tilde{\G_a}(\mathcal{O}_L, \mathfrak{m}_L^{-r}) = \left\{
\begin{matrix*}[l]
\mathcal{O}_L, & r \leq 1, \\
\sum_j F^j\left(\frac{1}{t^{r-1}} \mathcal{O}_L\right), & (p,r)=1,\\
\sum_j F^j\left(\frac{1}{t^r} \mathcal{O}_L \right), & p\mid r.
\end{matrix*}
\right. \]
\end{theorem}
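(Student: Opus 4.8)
The level statement requires no new work. The functor $A \mapsto W_n(A)$ is represented by the ring scheme $W_n$, hence by a commutative (unipotent) algebraic $k$-group, so $W_n$ has level $1$ by the general result \cite[Theorem 5.2]{RS1} recalled after \Cref{defn:level}. By the discussion following that definition, level $1$ means that the filtration $\tilde{W_n}(\mathcal{O}_L,\mathfrak{m}_L^{-r})$ is determined by the henselian discrete valuation fields $L$ of transcendence degree $\le 1$ over $k$ (a cut-by-curves criterion), and through \eqref{eq:Comp-Mod1} the entire modulus presheaf $\tilde{W_n}$ is recovered from these local filtrations. The whole content is therefore the local identity $\tilde{W_n}(\mathcal{O}_L,\mathfrak{m}_L^{-r}) = \fil^F_r W_n(L)$, whose case $n=1$ gives the final ``in particular''.

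For the filtration I would compare the motivic conductor $c^{W_n}_L(a) := \min\{r \ge 0 : a \in \tilde{W_n}(\mathcal{O}_L,\mathfrak{m}_L^{-r})\}$ with the Kato--Russell conductor $\min\{r : a \in \fil^F_r W_n(L)\}$ of \cite{KR}, establishing the two inclusions between $\fil^F_r W_n(L)$ and $\tilde{W_n}(\mathcal{O}_L,\mathfrak{m}_L^{-r})$. As in the proof of \Cref{prop:KahlerReciprocity}, the modulus condition is made explicit by a reciprocity/residue pairing: an element $a \in W_n(L)$ is paired with a unit $f$ to form the symbol $a\,\mathrm{dlog}\,f \in W_n\Omega^1(L)$, and membership in $\tilde{W_n}(\mathcal{O}_L,\mathfrak{m}_L^{-r})$ is controlled by the vanishing of the residues of such symbols for all $f \equiv 1 \bmod \mathfrak{m}_L^r$, via the reciprocity law on curves. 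Since $d, F, V, R$ are morphisms of reciprocity sheaves (\Cref{prop:deRhamWittReciprocity}), this pairing respects the de Rham--Witt structure, which lets me relate the residue computation to the already-known formula for $\tilde{\Omega^j}$ in \Cref{thm:Omega}; the inductive shape of $\fil^F_r W_n$, built from $\fil^{\log}$ together with the operators $F^j$ and $V^{n-s}$, matches the $V$-filtration of $W_n$ with successive quotients $\G_a = W_1$, reducing the forward inclusion to a direct residue estimate.

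The main obstacle is the sharp comparison, in particular the reverse inclusion $\tilde{W_n}(\mathcal{O}_L,\mathfrak{m}_L^{-r}) \subseteq \fil^F_r W_n(L)$, i.e.\ the optimality of the Kato--Russell filtration: for $a \notin \fil^F_r W_n(L)$ one must exhibit an explicit curve and function $f \equiv 1 \bmod \mathfrak{m}_L^r$ violating the modulus condition. This is a computation of ramification breaks governed by the ramification theory of Artin--Schreier--Witt characters: via Artin--Schreier--Witt theory $a$ determines a $\mathbb{Z}/p^n$-character of $\pi_1^{\mathrm{ab}}(\Spec L)$, and $c^{W_n}_L(a)$ is expected to agree, up to the standard normalization, with the ramification conductor of this character. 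The two delicate points are the Frobenius saturation $\sum_{j \ge 0} F^j$ --- forced because a pole order that is a $p$-th power can be lowered modulo $F-1$, so the filtration must be stable under $F$ --- and the ensuing case distinction, entering through $s = \min\{n,\mathrm{ord}_p(r)\}$ and reflecting the logarithmic versus non-logarithmic pole bounds already visible in \Cref{thm:Omega}.

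Finally, the displayed formula for $\tilde{\G_a}$ is the specialization to $n=1$, where $W_1 = \G_a$ and $\fil^{\log}_r W_1(L) = t^{-r}\mathcal{O}_L$. If $(p,r)=1$ then $\mathrm{ord}_p(r)=0$, so $s=0$ and $W_s = W_0 = 0$, the $V^{n-s}$-term drops, and $\fil^F_r \G_a(L) = \sum_j F^j(t^{-(r-1)}\mathcal{O}_L)$. If $p \mid r$ then $s=1$, the term $V^0(\fil^{\log}_r W_1(L)) = t^{-r}\mathcal{O}_L$ dominates $\fil^{\log}_{r-1}W_1(L) = t^{-(r-1)}\mathcal{O}_L$, and $\fil^F_r \G_a(L) = \sum_j F^j(t^{-r}\mathcal{O}_L)$. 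For $r \le 1$ both expressions collapse to $\mathcal{O}_L$. These are exactly the three cases in the statement.
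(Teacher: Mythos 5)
Your handling of the two formal parts is correct and matches how the paper itself frames them: the level-$1$ claim is indeed just the special case of \cite[Thm 5.2]{RS1} on commutative algebraic groups recalled after \Cref{defn:level}, and your unwinding of $\fil^F_r W_1(L)$ through $s=\min\{1,\mathrm{ord}_p(r)\}$ into the three displayed cases for $\tilde{\G_a}$ is accurate. Be aware, though, that the paper contains no proof of \Cref{thm:Wn} to compare against --- it is quoted from \cite[Thm 7.20]{RS1} --- so the substance of your proposal is entirely the sketch of the local identity $\tilde{W_n}(\mathcal{O}_L,\mathfrak{m}_L^{-r})=\fil^F_r W_n(L)$, and that sketch does not close in either direction.

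For the inclusion $\fil^F_r W_n(L)\subseteq \tilde{W_n}(\mathcal{O}_L,\mathfrak{m}_L^{-r})$ you defer to ``a direct residue estimate'' that is never carried out, and the proposed d\'evissage is not innocent: $\fil^F_r$ is not built from filtrations on the graded pieces of the $V$-filtration, since the saturation $\sum_{j}F^j$ and the term $V^{n-s}(\fil^{\log}_r W_s)$ mix levels, and the $F$ occurring there is the Frobenius \emph{endomorphism} of $W_n$, not the map $F\colon W_{n+1}\to W_n$ covered by \Cref{prop:deRhamWittReciprocity}, so even stability of the motivic filtration under it is not among the facts you cite. The decisive gap, however, is the reverse inclusion, which you reduce to the expectation that the motivic conductor of $a\in W_n(L)$ agrees (up to normalization) with the ramification conductor of its Artin--Schreier--Witt character. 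That identification is false, and it contradicts the very theorem being proved: take $a=(F-1)(t^{-1})=t^{-p}-t^{-1}\in\G_a(L)$. Its character in $H^1_{\et}(L,\Q/\Z)$ is trivial, so its ramification conductor is $0$; but $a$ has a pole, so $a\notin\mathcal{O}_L=\tilde{\G_a}(\mathcal{O}_L,\mathfrak{m}_L^{-1})$, while $a=F(t^{-1})-t^{-1}\in\fil^F_2\G_a(L)$, i.e.\ the motivic conductor is exactly $2$. The point is that $W_n(L)\to H^1_{\et}(L,\Q/\Z)$ kills $(F-1)W_n(L)$, on which the motivic conductor is unbounded; this is precisely why Matsuda's filtration is defined as the \emph{image} of $\fil^F_r W_n(L)$, and why the comparison with the Abbes--Saito filtration (\cite{AbbesSaito09}, \cite{Yatagawa}) can only bound the character conductor by the Witt conductor, never recover the latter. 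Sharpness therefore has to be proved at the level of Witt vectors themselves, by exhibiting for each $a\notin\fil^F_r W_n(L)$ explicit test data $f\equiv 1\bmod \mathfrak{m}_L^r$ with nonvanishing local symbol; that computation is the actual content of \cite[Thm 7.20]{RS1}, and it is absent from your proposal.
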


Let \(H^1_{\et}(L, \Q/\Z) = \Hom_{\text{cts}}(G_L, \Q/\Z),\) where \(G_L\) denotes the absolute Galois group of \(L\).
As a variant of the  Brylinski--Kato  filtration (\cite{Bry}, \cite{Kat}) Matsuda introduced in \cite{Matsuda} the 
following filtration on \(H^1_{\et}(L, \Q/\Z)\)  
\[\fil_r H^1_{\et}(L, \Q/\Z) := \bigoplus_{l \neq p} H^i_{\et}(L, \Q_l/\Z_l) \oplus \bigcup_n \text{im}\left(\fil^F_r W_n(L) \to H^1_{\et}(L, \Q/\Z)\right),\]
where the maps $\fil^F_r W_n(L) \to H^1_{\et}(L, \Q/\Z)$ are induced by the isomorphism
$H^1_{\et}(L, \Q/\Z)=\varinjlim_V W_n(L)/ (F-1)W_n(L)$ stemming from Artin-Schreier-Witt sequence\footnote{Matsuda does not consider the $F$-saturated filtration, but note that the images of $\fil^F$ and $\fil$ in the quotient $W_n(L)/(F-1)$ 
coincide.}.
This filtration was originally introduced to generalize the Artin-conductor to the case of imperfect residue fields. 

\begin{theorem}[{\cite[Theorem 8.10]{RS1}}]
Let \(\epsilon \colon {\rm Sm}_{\et} \to {\rm Sm}_{\Nis}\) denote the change of sites. Then \(R^1 \epsilon_* \Q/\Z \in \RSC_{\Nis}\) has level 1, and 
\[\tilde{R^1\epsilon_* (\Q/\Z)}(\mathcal{O}_L, \mathfrak{m}_L^{-r}) = \fil_r H^1_{\et}(L, \Q/\Z).\]
\end{theorem}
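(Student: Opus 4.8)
The plan is to compute the local modulus filtration $\tilde{G}(\mathcal{O}_L, \mathfrak{m}_L^{-r})$ of $G := R^1\epsilon_*(\Q/\Z)$ over all henselian discrete valuation fields $L$ of geometric type, since by \eqref{eq:Comp-Mod1} these local filtrations determine $\tilde{G}$ on every proper modulus pair. First I would record that $G \in \RSC_{\Nis}$ is already known (Section 2, point (4)) and split $G = G^{(p')} \oplus G^{(p)}$ into its prime-to-$p$ and $p$-primary parts. The summand $G^{(p')} = \bigoplus_{\ell \neq p} R^1\epsilon_*(\Q_\ell/\Z_\ell)$ is $\A^1$-invariant by Voevodsky's theorem, hence of level $0$, and its contribution matches the prime-to-$p$ summand of $\fil_r H^1_{\et}(L,\Q/\Z)$ by $\A^1$-invariance; the substance therefore concerns $G^{(p)} = \varinjlim_n G_n$ with $G_n := R^1\epsilon_*(\Z/p^n)$.

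For the $p$-part I would exploit the Artin--Schreier--Witt sequence. Specializing \eqref{eq:ASW} to $j = 0$, where $B^\infty W_n\Omega^0 = 0$, gives in $\RSC_{\Nis}$ the exact sequence $0 \to \Z/p^n \to W_n \xrightarrow{\bar F - 1} W_n \xrightarrow{q_n} G_n \to 0$, and in particular a surjection of reciprocity sheaves $q_n : W_n \twoheadrightarrow G_n$. Evaluated on $\Spec L$ this is the classical identification $G_n(L) = W_n(L)/(F-1)W_n(L)$, and passing to the limit $\bigcup_n q_n$ realizes the $p$-part of $\fil_r H^1_{\et}(L,\Q/\Z)$ precisely as $\bigcup_n q_n(\fil^F_r W_n(L))$, by Matsuda's definition. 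The idea is then to transport the known Witt computation, $\tilde{W_n}(\mathcal{O}_L, \mathfrak{m}_L^{-r}) = \fil^F_r W_n(L)$ with $W_n$ of level $1$ (\Cref{thm:Wn}), across $q_n$.

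One inclusion is formal. Since $q_n$ is a morphism of reciprocity sheaves, it carries a section of modulus $(X,D)$ to a section of modulus $(X,D)$, because the defining factorization through $\underline{\omega}_! h_0^{\boks}(X,D)$ simply composes with $q_n$; hence $q_n(\tilde{W_n}(\mathcal{O}_L,\mathfrak{m}_L^{-r})) \subseteq \tilde{G}(\mathcal{O}_L,\mathfrak{m}_L^{-r})$, which together with \Cref{thm:Wn} and Matsuda's definition yields $\fil_r H^1_{\et}(L,\Q/\Z) \subseteq \tilde{G}(\mathcal{O}_L,\mathfrak{m}_L^{-r})$. I expect the reverse inclusion to be the main obstacle: the functor $\widetilde{(-)} = \underline{\omega}^{\CI}$ is only a right adjoint, hence left but not right exact, so a surjection of sheaves need not induce a surjection --- let alone a \emph{levelwise strict} surjection --- on the modulus filtrations. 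Proving $\tilde{G}(\mathcal{O}_L,\mathfrak{m}_L^{-r}) \subseteq q_n(\fil^F_r W_n(L))$ amounts to showing that $q_n$ is \emph{strict} for the modulus filtration, equivalently to a lower bound on the motivic conductor: a character of modulus $\le r$ must admit a Witt-vector lift of Witt-conductor $\le r$.

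To overcome this I would show that Matsuda's filtration itself defines a conductor in the axiomatic sense of \cite{RS1} --- verifying monotonicity, semicontinuity, and the reciprocity (sum) formula via Kato's refined Swan conductor, all of which are detected on curves by ramification theory --- and then invoke the comparison and minimality of the motivic conductor to identify it with $c^{G}$. Equivalently, one checks that the two filtrations agree after restriction to all regular curves over function fields. This same point simultaneously yields the level assertion: once the filtration is shown to be computed by restriction to curves --- which is exactly what the level-$1$ property of $W_n$ in \Cref{thm:Wn} supplies on the source of $q_n$ --- the cut-by-curves criterion of \Cref{defn:level} gives that $G$ has level $1$, and it is not level $0$ because wildly ramified characters exist. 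The remaining bookkeeping, namely the behaviour at the boundary value $r = 0$ and the precise log versus non-log normalization matching the tame summand, is routine.
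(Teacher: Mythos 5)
The paper itself contains no proof of this statement (it is quoted from [RS1, Thm 8.10]), so I am judging your proposal on its merits. Your skeleton is the right one: the prime-to-$p$/$p$-primary splitting, the Artin--Schreier--Witt surjection $q_n\colon W_n\twoheadrightarrow G_n=R^1\epsilon_*\Z/p^n$ obtained from \eqref{eq:ASW} with $j=0$, and the observation that $\fil_r H^1_{\et}(L,\Q/\Z)\subseteq \tilde{G}(\mathcal{O}_L,\mathfrak{m}_L^{-r})$ follows formally from \Cref{thm:Wn} because morphisms in $\RSC_{\Nis}$ preserve the modulus condition --- all of this is correct, and you correctly locate the real content in the reverse inclusion. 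But the mechanism you propose for that reverse inclusion is backwards, and this is a genuine gap. In the axiomatic framework of [RS1], the motivic conductor $c^G$ (whose associated filtration is $\tilde{G}(\mathcal{O}_L,\mathfrak{m}_L^{-\bullet})$) is the \emph{minimal} conductor: if $c$ is any collection of local filtrations satisfying the axioms (monotonicity, semicontinuity, reciprocity on curves), then the subpresheaf of $\underline{\omega}^*G$ it cuts out lies in $\CI^{\tau,sp}$ and is therefore contained in $\tilde{G}=\underline{\omega}^{\CI}G$, which is the maximal such subobject; this says exactly $c^G\le c$. Hence verifying that Matsuda's filtration defines an axiomatic conductor yields only $c^G\le \mathrm{art}$, i.e.\ $\fil_r\subseteq\tilde{G}(\mathcal{O}_L,\mathfrak{m}_L^{-r})$ --- the same inclusion you already have from the Witt-vector side. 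No minimality argument can produce the needed bound $\mathrm{art}\le c^G$: conductors strictly larger than the motivic one also satisfy the axioms, so the axioms cannot pin down equality. What the hard direction actually requires is to show that a character \emph{outside} $\fil_r$ violates the modulus condition for $(\Spec \mathcal{O}_L, r\cdot s)$, i.e.\ to produce explicit test correspondences (curves together with functions congruent to $1$ modulo the divisor) on which the factorization through $\underline{\omega}_!h_0^{\boks}$ fails; this is where the non-degeneracy of the Kato/Artin--Schreier--Witt local symbol (refined Swan conductor theory) enters, and it is precisely the strictness of $q_n$ that you flagged but did not prove.

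Your fallback --- ``check that the two filtrations agree after restriction to regular curves over function fields'' --- does not dissolve the problem either, for two reasons. First, it presupposes that both filtrations are detected by such restrictions: on the motivic side this is the level-$1$ property of $G$, which is part of the statement being proved, and your argument that it is inherited from $W_n$ fails because level does not pass to quotients (a section of $G_n(\A^1\times X)$ lifts to $W_n$ only Nisnevich-locally, and even a global lift need not satisfy the hypothesis $a_{\A^1_z}\in W_n(z)$ when its image does); on the Matsuda side the cut-by-curves property of the Artin conductor is itself a nontrivial theorem, not an input one can quote for free. Second, even after such a reduction one is left with proving $\mathrm{art}\le c^G$ for henselian discrete valuation fields of transcendence degree $1$, which is again the hard local comparison, requiring the duality/test-curve construction rather than any axiomatic argument.
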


\begin{remark}
\begin{enumerate}
    \item By work of Abbes--T. Saito  \cite{AbbesSaito09} and Yatagawa \cite{Yatagawa}, we have 
\[\fil_r H^1_{\et}(L, \Q/Z) = \Hom_{\text{cts}}(G_L / G_L^{r+}, \Q/\Z),\]
where \(\{G_L^j\}_{j \in \Q_{\geq 0}}\) is the Abbes--T. Saito 
ramification filtration of \(G_L\) and \(G_L^{r+} = \overline{\bigcup_{s>r} G_L^s}.\) 
\item Similarly as above one can use \eqref{eq:ASW} to determine  $\widetilde{R^{j+1}\epsilon_*(\Q/\Z(j))}$ for $j\ge 1$.
       This is work in progress.
\end{enumerate}
\end{remark}

\subsection{Torsors under finite group schemes in positive characteristic}
Let \(\text{char}(k)=p>0\), and let \(G\) be a finite commutative \(k\)-group scheme. We can write \[G = G_{\text{em}} \times G_{\text{eu}} \times G_{\text{im}} \times G_{\text{iu}},\] where \(G_{\text{em}}\) is an étale multiplicative group (e.g. \(\Z/l\)), \(G_{\text{eu}}\) is an étale unipotent group (e.g. \(\Z/p\)), \(G_{\text{im}}\) is an infinitesimal multiplicative group (e.g. \(\mu_p\)), and \(G_{\text{iu}}\) is an infinitesimal unipotent group (e.g. \(\alpha_p\)).
Consider the presheaf on ${\rm Sm}$
\[X \mapsto H^1(G)(X):= H^1_{\text{fppf}}(X, G)\]
which classifies isomorphism classes of fppf-\(G\)-torsors over \(X\).

\begin{theorem}[{\cite[Theorem 9.12]{RS1}}]
\begin{itemize}
    \item The presheaf \(H^1(G)\) belongs to \(\RSC_{\Nis}\) and has level 2, except for the case when \(G_{\text{iu}}=0\), in which case it has level 1.
    \item $H^1(G_{\text{em}}\times G_{\text{im}}) \in \HI_{\Nis}$.
    \item The map $L\to H^1(\alpha_p)(L)$ induced by 
    the exact sequence of fppf-sheaves $0\to\alpha_p\to \G_a\xrightarrow{F}\G_a\to 0$ restricts to 
      surjections
\[\tilde{\G_a}(\mathcal{O}_L, \mathfrak{m}_L^{-r})\twoheadrightarrow \tilde{H^1(\alpha_p)}(\mathcal{O}_L, \mathfrak{m}_L^{-r}), \quad r\ge 0.\]
\end{itemize}

\end{theorem}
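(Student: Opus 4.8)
The plan is to use the canonical decomposition $G = G_{\text{em}} \times G_{\text{eu}} \times G_{\text{im}} \times G_{\text{iu}}$, which induces a product decomposition
\[
H^1(G) \cong H^1(G_{\text{em}}) \times H^1(G_{\text{eu}}) \times H^1(G_{\text{im}}) \times H^1(G_{\text{iu}}),
\]
and to treat each factor separately via the short exact sequence of fppf sheaves presenting it as a Frobenius- or power-kernel inside a smooth commutative group. For membership in $\RSC_{\Nis}$ I would argue as follows. For the infinitesimal multiplicative factor, the Kummer sequence $0 \to \mu_{p^i} \to \G_m \xrightarrow{F^i} \G_m \to 0$ together with the vanishing of $H^1_{\text{fppf}}(-, \G_m) = \operatorname{Pic}$ on Nisnevich stalks identifies the sheafification of $H^1(\mu_{p^i})$ with $\Coker(F^i \colon \G_m \to \G_m)$; for $G_{\text{em}}$ one uses the prime-to-$p$ Kummer sequence and lands in the prime-to-$p$ part of $R^1\epsilon_*\Q/\Z$. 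For $G_{\text{eu}}$ the Artin--Schreier--Witt sequence gives $H^1(\Z/p^n)_{\Nis} = R^1\epsilon_*\Z/p^n = \Coker(F - 1 \colon W_n \to W_n)$, and for $G_{\text{iu}}$ the Frobenius-kernel sequence $0 \to \alpha_p \to \G_a \xrightarrow{F} \G_a \to 0$ together with the vanishing of $H^1_{\text{fppf}}(-, \G_a) = H^1(-, \mathcal{O})$ on Nisnevich stalks gives $H^1(\alpha_p)_{\Nis} = \Coker(F \colon \G_a \to \G_a)$. In every case the building blocks $\G_m$, $\G_a$, $W_n$ are reciprocity sheaves, so since $\RSC_{\Nis}$ is abelian (\Cref{cor:RSC-ab}) all four cokernels, and hence $H^1(G)$, lie in $\RSC_{\Nis}$. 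A dévissage along composition series reduces the general factors to these generators.

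Next I would compute the levels. The two multiplicative cokernels are quotients of the homotopy invariant sheaf $\G_m$ (resp. lie in prime-to-$p$ étale cohomology, homotopy invariant by Voevodsky), and $\HI_{\Nis}$ is an abelian subcategory of $\RSC_{\Nis}$, so $H^1(G_{\text{em}} \times G_{\text{im}}) \in \HI_{\Nis}$; this is the second bullet and gives level $0$. The étale unipotent factor is of the same shape as $R^1\epsilon_*\Q/\Z$ and has level $1$ by the preceding character computation. Since level is monotone and the level of a product is the maximum of the levels of its factors (clear from \Cref{defn:level}), the first bullet reduces to the claim that $H^1(\alpha_p) = \Coker(F \colon \G_a \to \G_a)$ has level exactly $2$: level $\le 2$ by a cut-by-surfaces argument, and \emph{not} level $1$ by exhibiting a class on $\A^1 \times X$ whose restriction to every $\A^1_z$ (for $z$ a closed point of $X$) is constant yet which is itself non-constant, the $p$-th-power ambiguity producing precisely such a class.

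For the third bullet, the quotient presentation $q \colon \G_a \twoheadrightarrow H^1(\alpha_p)$ induces $\tilde q \colon \tilde{\G_a} \to \tilde{H^1(\alpha_p)}$, and the asserted surjectivity at $(\mathcal{O}_L, \mathfrak{m}_L^{-r})$ is equivalent to the identification
\[
\tilde{H^1(\alpha_p)}(\mathcal{O}_L, \mathfrak{m}_L^{-r}) = q\bigl(\fil^F_r \G_a(L)\bigr),
\]
where $\fil^F_r \G_a(L) = \tilde{\G_a}(\mathcal{O}_L, \mathfrak{m}_L^{-r})$ by \Cref{thm:Wn}. The inclusion $\supseteq$ is functoriality of the modulus (the conductor cannot increase along $q$); for $\subseteq$ one lifts a class to $L$ and corrects the lift by an element of $F(L)$, which is harmless precisely because $\fil^F$ is by construction $F$-saturated and the fibres of $q$ are cosets of $F(L) = F^1\G_a(L)$. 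Here I would compute the motivic conductor of $H^1(\alpha_p)$ directly from the Frobenius-kernel sequence and \eqref{eq:Comp-Mod1}.

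The hard part will be the sharp level-$2$ statement for $H^1(\alpha_p)$. All the generators $\G_a$ and $W_n$ have level $1$, and passing to the Frobenius cokernel genuinely raises the level, so neither the upper bound $\le 2$ nor the failure of level $1$ is formal: both rest on the explicit local filtration of \Cref{thm:Wn} and a careful analysis of how the $F$-saturation interacts with the modulus filtration after quotienting by $F(\G_a)$.
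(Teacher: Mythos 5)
The paper itself offers no proof of this statement---it is quoted directly from \cite[Thm 9.12]{RS1}---so your proposal can only be measured against the original argument and against its own internal logic. Your skeleton is the natural one and very likely matches the original: split $G$ into the four factors, identify the Nisnevich-local structure of each $H^1$ via the Kummer, Artin--Schreier--Witt and Frobenius-kernel sequences, use that $\RSC_{\Nis}$ is abelian (Corollary \ref{cor:RSC-ab}) for membership, observe that the level of a finite product is the maximum of the levels of the factors, and reduce everything to $H^1(\alpha_p)\cong\Coker(F\colon \G_a\to\G_a)$. Two points here deserve more care than you give them: the absolute Frobenius must be checked to be a morphism of sheaves \emph{with transfers} (true in characteristic $p$ because traces commute with $p$-th powers, but not a triviality, and it is what makes the cokernel an object of $\RSC_{\Nis}$), and the presheaf $X\mapsto H^1_{\rm fppf}(X,G)$ is \emph{not} the cokernel presheaf globally, since $H^1(X,\mathcal{O})$ and ${\rm Pic}(X)$ do not vanish for non-local $X$; one must work with sheafifications consistently throughout.

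The genuine gaps sit exactly at the two places you flag as ``the hard part'', and for the third bullet your argument is circular rather than merely incomplete. For a surjection $q\colon F\twoheadrightarrow G$ in $\RSC_{\Nis}$ the induced maps $\tilde{F}(X,D)\to\tilde{G}(X,D)$ are \emph{not} surjective in general---that failure is precisely what makes the third bullet a theorem rather than an instance of functoriality. Your step for $\subseteq$ (``lift to $L$ and correct by a $p$-th power, harmless by $F$-saturation'') presupposes the identity $\tilde{H^1(\alpha_p)}(\mathcal{O}_L,\mathfrak{m}_L^{-r})=q\bigl(\fil^F_r\G_a(L)\bigr)$, which is the statement being proved; $F$-saturation is a property of the filtration on $\G_a(L)$ and says nothing about which classes in $L/F(L)$ of conductor $\le r$ admit lifts of conductor $\le r$. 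What is needed is an actual computation of the modulus of classes in $L/FL$, e.g.\ via Artin--Schreier theory for $\alpha_p$-torsors or the local-symbol and characteristic-form machinery of \cite{RS1}. Likewise the bound ``level $\le 2$'' cannot be formal: this very example shows that quotients in $\RSC_{\Nis}$ can raise the level, so no d\'evissage from the level-$1$ sheaves $\G_a$, $W_n$ will produce it, and ``cut-by-surfaces'' is a name for the desired statement, not an argument. Only the failure of level $1$ is easy, and there you should simply write the witness down: the class of $xt^p$ in $H^1(\alpha_p)(\A^1_t\times\A^1_x)$ restricts to $0$ on $\A^1_t\times\{z\}$ for every closed point $z$ (residue fields of closed points are finite extensions of the perfect field $k$, hence perfect), yet it is not pulled back from $\A^1_x$: a relation $xt^p-g(x)\in k(t,x)^p=k(t^p,x^p)$ would force $x\in k(x^p)$, a contradiction.
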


\section{Tensor products and twists}
\subsection{The Lax monoidal structure on \(\RSC_{\Nis}\)}
\begin{definition}
For two reciprocity sheaves \(F\) and \(G\) we define a new reciprocity sheaf by
\[(F, G)_{\RSC_{\Nis}}:= \underline{\omega}_!(h_0^{\boks}(\tilde{F} \otimes_{\MPST} \tilde{G}))_{\Nis} \in \RSC_{\Nis}.\]
\end{definition}
It is not clear that this induces a monoidal structure since it is not clear that this construction is associative. 
Also, it is not clear that this is right exact since \(\underline{\omega}^{\CI}\) is not right exact. 
However, it induces a lax monoidal structure, see \cite[Cor 4.18]{RSY}.

\begin{theorem}[\cite{RSY}]\label{thm:tensor}
\hfill
\begin{enumerate}[label=\arabic*.]
\item Let $\HI_{\Nis}$ denote the category of $\A^1$-invariant Nisnevich sheaves with transfers.
By Voevodsky it has a symmetric monoidal structure denoted by $\otimes_{\HI_{\Nis}}$.
For \(F, G \in \HI_{\Nis}\) we have
\[(F, G)_{\RSC_{\Nis}} = F \otimes_{\HI_{\Nis}} G.\]
\item If \(\text{char } k=0\), then 
\[(G, A)_{\RSC_{\Nis}}=0,\]
for any  commutative unipotent group \(G\) and  abelian variety  \(A\).
 \item\label{thm:tensor3} If \(\text{char }k=0\), then there are isomorphisms
 \[(\G_a, \G_m)_{\RSC_{\Nis}}\xrightarrow{\simeq} \Omega^1_{/\Z},\quad 
 \gamma\otimes a\otimes u\mapsto \gamma^*(p^*a \wedge d\log q^*u),\]
 and if we denote by $I_{\Delta_X}\subset\mathcal{O}_{X\times_{\Z} X}$ the ideal sheaf of the diagonal,
 then
 \[(\G_a, \G_a)_{\RSC_{\Nis}}(X) \xrightarrow{\simeq} 
 H^0(X, \mathcal{O}_{X \times_{\Z} X}/I^2_{\Delta_X}), \quad 
 \gamma\otimes a\otimes b\mapsto \gamma^*(p^*a \otimes q^*b)
 \footnote{We use the transfers structure  stemming from the decomposition 
 $\mathcal{O}_{X \times_{\Z} X}/I^2_{\Delta_X}=\mathcal{O}_X\oplus\Omega^1_{/\Z}$.},\]
 where $\gamma\in \Cor(X, Y\times Z)$, $a\in \G_a(Y)$, $b\in \G_a(Z)$, $u\in \G_m(Z)$,
 and $p$, $q$ denote the projections from $Y\times Z$ to $Y$, $Z$, respectively, 
 and the element $\gamma\otimes a\otimes u$ denotes the images of the corresponding element in 
 $(\G_a\otimes_{\PST} \G_m)(X)$ under the natural map 
 $\G_a\otimes_{\PST} \G_m\to (\G_a, \G_m)_{\RSC_{\Nis}}$, similarly with $\gamma\otimes a\otimes b$.
 \end{enumerate}
\end{theorem}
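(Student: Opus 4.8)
The plan is to deduce all three parts from one structural observation combined with the explicit modulus computations of \Cref{thm:Omega} and \Cref{thm:Wn}. The structural input is a canonical surjection. Since $\underline{\omega}_!$ is strong monoidal (it sends $\otimes_{\MPST}$ to $\otimes_{\PST}$, being a colimit-preserving functor which is monoidal on representables, where $\underline{\omega}_!(\X\otimes\Y)=\underline{\omega}_!\X\times\underline{\omega}_!\Y$), is right exact, and satisfies $\underline{\omega}_!\widetilde{F}=F$, $\underline{\omega}_!\widetilde{G}=G$ by semi-purity, I would apply $\underline{\omega}_!$ to the defining quotient $\widetilde{F}\otimes_{\MPST}\widetilde{G}\twoheadrightarrow h_0^\boks(\widetilde{F}\otimes_{\MPST}\widetilde{G})$ and then Nisnevich sheafify (an exact functor) to obtain a canonical surjection
\[
F\otimes_{\PST}G\twoheadrightarrow (F,G)_{\RSC_{\Nis}}.
\]
Consequently, in each case surjectivity of the asserted map is automatic once the target is generated by the displayed symbols, and the real content lies in well-definedness and in injectivity.

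For (1), I would show that for $F,G\in\HI_{\Nis}$ the modulus presheaves $\widetilde{F},\widetilde{G}$ carry no extra modulus, so that $\boks$-invariance collapses to $\A^1$-invariance. Indeed, $\A^1$-invariant sheaves have level $0$, whence $\widetilde{F}(\mathcal{O}_L,\mathfrak{m}_L^{-1})=F(L)$ and the entire filtration is constant. Using this, the adjunction $\underline{\omega}_!\dashv\underline{\omega}^{\CI}$, and the compatibility of $h_0^\boks$ with $h_0^{\A^1}$ recorded in \Cref{rmk:surj}, I would identify $\underline{\omega}_!h_0^\boks(\widetilde{F}\otimes_{\MPST}\widetilde{G})$ with $h_0^{\A^1}(F\otimes_{\PST}G)$; Nisnevich sheafifying then yields Voevodsky's product $F\otimes_{\HI_{\Nis}}G$.

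For (3) I would first produce the maps in the asserted direction. Since $\widetilde{\Omega^1_{/\Z}}$ and the modulus presheaf of $\mathcal{O}/I^2_\Delta$ are cube invariant, the universal property of $h_0^\boks$ reduces the construction of $(\G_a,\G_m)_{\RSC_{\Nis}}\to\Omega^1_{/\Z}$ and $(\G_a,\G_a)_{\RSC_{\Nis}}\to\mathcal{O}/I^2_\Delta$ to giving maps $\widetilde{\G_a}\otimes_{\MPST}\widetilde{\G_m}\to\widetilde{\Omega^1_{/\Z}}$ and $\widetilde{\G_a}\otimes_{\MPST}\widetilde{\G_a}\to\widetilde{\mathcal{O}/I^2_\Delta}$ sending $a\otimes u\mapsto a\,d\log u$ and $a\otimes b\mapsto a\otimes b$; well-definedness amounts to checking that these respect the modulus, which on a henselian dvr $\mathcal{O}_L$ reduces, via the explicit filtrations $\widetilde{\G_a}(\mathcal{O}_L,\mathfrak{m}_L^{-r})$ of \Cref{thm:Wn} and $\widetilde{\Omega^1_{/\Z}}(\mathcal{O}_L,\mathfrak{m}_L^{-r})$ of \Cref{thm:Omega}, to verifying that multiplication by a function and $d\log$ of a unit shift pole orders correctly. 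The injectivity of these maps is, I expect, the main obstacle. Both sides being reciprocity sheaves, the injectivity theorem (the first map of the Cousin resolution \eqref{eq:Cousin}) lets me test injectivity on sections over fields $L$, and there I would argue that the symbol map is filtered for the exhaustive modulus filtrations and an isomorphism on associated graded. The delicate point is computing the graded pieces of the modulus filtration on the tensor $h_0^\boks(\widetilde{\G_a}\otimes_{\MPST}\widetilde{\G_m})$ in terms of the graded pieces of the factor filtrations of \Cref{thm:Wn} and \Cref{thm:Omega}: since $\underline{\omega}^{\CI}$ is not right exact, the naive product of filtrations must be shown to compute the correct object, and establishing this Künneth-type description is the technical heart of the argument.

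Finally, for (2) I would dévisser. In characteristic $0$ every commutative unipotent group is an iterated extension of $\G_a$, so by additivity of the symbol in the first variable it suffices to treat $G=\G_a$ and prove $(\G_a,A)_{\RSC_{\Nis}}=0$. The key distinction from the $\G_m$ case is that $A$ is proper, whence $A(\mathcal{O}_L)=A(L)$ and $\widetilde{A}(\mathcal{O}_L,\mathfrak{m}_L^{-r})=A(L)$ is independent of $r$: the modulus filtration of an abelian variety has trivial graded pieces. Feeding this into the same filtration computation as in (3) — where the nonzero answer $\Omega^1_{/\Z}$ arose precisely from the nontrivial $d\log$ graded piece of $\widetilde{\G_m}$ — the absence of higher graded pieces for $\widetilde{A}$ forces every symbol $a\otimes P$ to vanish in the cube-invariant quotient, yielding $(\G_a,A)_{\RSC_{\Nis}}=0$.
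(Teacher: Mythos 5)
The first thing to say is that the paper itself contains \emph{no} proof of \Cref{thm:tensor}: it is quoted as a black box from \cite{RSY}, so there is no internal argument to compare yours against; the closest the paper comes is its proof of the neighbouring proposition on $(\G_a, K^M_n)_{\RSC_{\Nis}}$ in characteristic $p\ge 7$. Your formal skeleton is consistent with that: $\underline{\omega}_!$ is indeed strong monoidal and right exact with $\underline{\omega}_!\widetilde{F}=F$, so the canonical surjection $(F\otimes_{\PST}G)_{\Nis}\twoheadrightarrow (F,G)_{\RSC_{\Nis}}$ exists, and reducing injectivity questions to function fields via \eqref{eq:Cousin} and \Cref{thm:ReciprocityPurity} is exactly the reduction the paper uses there. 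The problem is that each of the three substantive steps of your proposal is either missing or rests on a false premise.

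The clearest flaw is part (2). Your mechanism --- the modulus filtration of $\widetilde{A}$ is constant, whereas the nonzero answer in (3) ``arose from the nontrivial $d\log$ graded piece of $\widetilde{\G_m}$'' --- proves too much: $\G_m$ is itself homotopy invariant, so $\widetilde{\G_m}(\mathcal{O}_L,\mathfrak{m}_L^{-r})=\G_m(L)$ for all $r\ge 1$ and its modulus filtration has no nontrivial graded pieces either. Run your argument with $\G_m$ in place of $A$ and it would yield $(\G_a,\G_m)_{\RSC_{\Nis}}=0$, contradicting part (3). The actual difference sits at the zeroth step ($A(\mathcal{O}_L)=A(L)$ by properness, while $\G_m(\mathcal{O}_L)\subsetneq\G_m(L)$), and in any case ``absence of graded pieces'' is a non sequitur: to kill a symbol $a\otimes P$ in the cube-invariant quotient you must exhibit actual $\boks$-homotopies, which triviality of a filtration does not produce. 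A parallel gap undermines parts (1) and (3). For (1), identifying $\underline{\omega}_!h_0^{\boks}(\widetilde{F}\otimes_{\MPST}\widetilde{G})$ with $h_0^{\A^1}(F\otimes_{\PST}G)$ requires showing every $\A^1$-homotopy relation is (at least after sheafification) a $\boks$-relation in the modulus tensor product; constancy of the filtrations lets you lift the \emph{sections}, but not the \emph{correspondences}: a homotopy $\gamma\in\Cor(X\times\A^1,Y\times Z)$ need not define a modulus correspondence out of $(X\times\P^1,X\times\{\infty\})$, with coefficient one at infinity --- already the graph of $t\mapsto t^2$ pulls back the reduced boundary with multiplicity $2$ and violates the divisor inequality in the definition of $\MCor$. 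This is precisely why $\boks$ is not an interval and why (1) is a theorem rather than a formality, and your sketch does not address it. Finally, for (3) you explicitly defer the injectivity/K\"unneth step as ``the technical heart'': I agree it is, but that means the proposal contains no proof of (3); a workable route, mirroring the paper's characteristic-$p$ proposition, is instead to produce a surjection $\Omega^1_K\to(\G_a,\G_m)_{\RSC_{\Nis}}(K)$ by verifying the defining relations of $\Omega^1$ among symbols and checking that the composite with your symbol map is the identity.
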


\begin{definition}
For \(G\) in \(\CI^{\tau, sp}_{\Nis}\) we define
\[G(n) := h_0^{\boks}(G \otimes_{\MPST} \tilde{K^M_n})^{sp}_{\Nis} \in \CI^{\tau, sp}_{\Nis}\]
and
\[\gamma^n G:= \underline{\Hom}_{\MPST}(\tilde{K}^M_n, G) \in \CI^{\tau, sp}_{\Nis},\]
where $K^M_n$ denotes the (improved) Milnor $K$-sheaf (see \cite{Kerz}) and
the upper $sp$ in the first formula denotes the semi-purification functor 
$\CI^\tau_{\Nis}\to \CI^{\tau, sp}_{\Nis}$, which is given by 
$G^{sp}={\rm Im}(G\to \underline{\omega}^*\underline{\omega}_!(G))$.

For \(F\) in \(\RSC_{\Nis}\) we define
\[F\langle 1 \rangle := (F, \G_m)_{\RSC_{\Nis}},\]
and recursively
\[F\langle n \rangle := (F\langle n-1 \rangle)\langle 1 \rangle \in \RSC_{\Nis}.\]
We also define
\[\gamma^n F := \underline{\Hom}_{\PST}(K^M_n, F) \in \RSC_{\Nis}.\]
\end{definition}

Generalizing part of Voevodsky's cancellation theorem (\cite{Cancellation}) Merici and Saito show the following:
\begin{theorem}[{\cite[Corollary 3.6]{MS}}]\label{thm:MSCancellation}
For \(F\) in \(\RSC_{\Nis}\) we have 
\[\gamma^n(\tilde{F}(n)) \cong \tilde{F}\]
and \[\gamma^n(F\langle n \rangle) = F.\]
\end{theorem}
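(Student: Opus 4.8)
The plan is to treat the isomorphism in $\CI^{\tau,sp}_{\Nis}$ as the fundamental statement and to deduce the one in $\RSC_{\Nis}$ from it by applying $\underline{\omega}_!$. To that end I would first record three compatibilities: $\underline{\omega}_!\tilde{F} = F$ (since $\tilde{F}(X,\emptyset) = F(X)$, every section having \emph{some} modulus); $\underline{\omega}_!(\tilde{F}(n)) = F\langle n\rangle$ (because $\underline{\omega}_!$ collapses the semi-purification, as $\underline{\omega}_! G^{sp} = \underline{\omega}_! G$ by $\underline{\omega}_!\underline{\omega}^* = \mathrm{id}$, and intertwines $h_0^{\boks}(\tilde{F}\otimes_{\MPST}\tilde{\G_m})^{sp}_{\Nis}$ with $(F,\G_m)_{\RSC_{\Nis}}$, once the $\tilde{K}^M_n$-twist is identified with the $n$-fold $\tilde{\G_m}$-twist); and $\underline{\omega}_!\gamma^n = \gamma^n\underline{\omega}_!$ (compatibility of $\underline{\omega}_!$ with the internal hom out of the modulus Milnor $K$-sheaf, checked on representables). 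Granting these and the fact that $\underline{\omega}_!$ is exact (it is pointwise evaluation, $\underline{\omega}_! G(X) = G(X,\emptyset)$), applying $\underline{\omega}_!$ to $\gamma^n(\tilde{F}(n))\cong\tilde{F}$ yields $\gamma^n(F\langle n\rangle) = F$.

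It remains to prove $\gamma^n(\tilde{F}(n))\cong\tilde{F}$ in $\CI^{\tau,sp}_{\Nis}$. The functors $(-)(n)$ and $\gamma^n = \underline{\Hom}_{\MPST}(\tilde{K}^M_n,-)$ form an adjoint pair on $\CI^{\tau,sp}_{\Nis}$: the twist is the composite of $-\otimes_{\MPST}\tilde{K}^M_n$ (left adjoint to $\underline{\Hom}_{\MPST}(\tilde{K}^M_n,-)$) with the reflections $h_0^{\boks}$, semi-purification, and Nisnevich sheafification, hence is left adjoint to $\gamma^n$. The assertion is exactly that the unit $\eta_{\tilde{F}}\colon\tilde{F}\to\gamma^n(\tilde{F}(n))$ is an isomorphism; in fact I would aim to show the unit is an isomorphism on all of $\CI^{\tau,sp}_{\Nis}$, i.e.\ that the twist is fully faithful. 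Using the compatibility of the Milnor $K$-theory product with the twist, which identifies the $\tilde{K}^M_n$-twist with the $n$-fold iterate of the $\tilde{\G_m}$-twist, I would reduce by induction to the case $n=1$.

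For $n=1$ I would follow Voevodsky's strategy adapted to the cube $\boks=(\P^1,\infty)$. One builds, from the multiplication on $\G_m$ and the geometry of $\P^1$, an explicit candidate inverse $\phi\colon\gamma^1(\tilde{F}(1))\to\tilde{F}$; the relation $\phi\circ\eta_{\tilde{F}} = \mathrm{id}$ is formal, and the substance is $\eta_{\tilde{F}}\circ\phi = \mathrm{id}$. The latter is obtained by exhibiting a contracting homotopy, namely a morphism parametrized by $\boks$ that realizes the composite as $\boks$-homotopic to the identity, after which $\boks$-invariance of the target forces the two to agree. Semi-purity is what makes these correspondences, and the evaluation on sections over $\A^1\setminus\{0\}$ that enters the construction, well defined.

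The main obstacle will be controlling the modulus throughout. Voevodsky's auxiliary cycles and the homotopy must be promoted to genuine modulus correspondences, which means verifying the properness of the projection $V^N\to X$ and, more delicately, the modulus inequality $D\vert_{V^N}\ge E\vert_{V^N}$ along the normalization of the closure of every cycle that occurs. Composition of modulus correspondences and the formation of the $\boks$-parametrized homotopy can a priori raise the pole order along the divisor at infinity, so one must check that all the cycles and the homotopy stay within the prescribed modulus. This bookkeeping of divisors under normalization, carried out with $\boks$-invariance in place of $\A^1$-invariance, is what replaces the routine moving-lemma step of the classical cancellation theorem.
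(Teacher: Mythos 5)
These notes do not actually prove \Cref{thm:MSCancellation}: it is imported wholesale from Merici--Saito \cite{MS}, so the only meaningful comparison is with the proof given there. At the level of architecture your plan does mirror that proof: set up the adjunction between the twist \((-)(n)\) and \(\gamma^n\) on \(\CI^{\tau,sp}_{\Nis}\) (note that already the existence of the reflections onto \(\CI^{\tau,sp}_{\Nis}\) rests on Saito's Theorem~\ref{thm:NisSheafication}), show that the unit of this adjunction is an isomorphism, reduce to \(n=1\) by identifying the \(\widetilde{K}^M_n\)-twist with the iterated \(\widetilde{\G_m}\)-twist, and descend to \(\RSC_{\Nis}\) along \(\underline{\omega}_!\).

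There are, however, genuine gaps, one of them decisive. The decisive one is your \(n=1\) step, which consists of announcing that Voevodsky's cancellation argument goes through ``after bookkeeping of the modulus''. That bookkeeping is not a routine verification; it is exactly the point where the classical argument breaks and where all the content of \cite{MS} sits. Voevodsky's auxiliary correspondences, built from the rational functions \(g_m=(t_1^{m+1}-1)/(t_1^{m+1}-t_2)\), do \emph{not} satisfy the required inequality \(D\vert_{V^N}\geq E\vert_{V^N}\) relative to \(\boks^{(1)}\otimes\boks^{(1)}\); to repair this one must pass to the higher-modulus cubes \(\boks^{(m)}=(\P^1, m\cdot\{0\}+m\cdot\{\infty\})\), prove that for a semipure cube-invariant sheaf the sections over \(\X\otimes\boks^{(1)}\) and \(\X\otimes\boks^{(m)}\) agree, and construct the contracting homotopies only after such a raise of modulus --- precisely the phenomenon illustrated by the modulus-descent proposition recalled in Section~\ref{sec:CohRec}, where \(\psi_0\) defines a correspondence into \(\boks^{(2)}_s\) but not into \(\boks^{(1)}_s\). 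Saying the inequalities ``must be checked'' restates the problem rather than solving it. A second, smaller gap: the commutation \(\underline{\omega}_!\gamma^n=\gamma^n\underline{\omega}_!\) is not formal and cannot be ``checked on representables'' --- \(\underline{\omega}_!\) is a left adjoint and has no general compatibility with the internal hom \(\underline{\Hom}_{\MPST}(\widetilde{K}^M_n,-)\). It does hold for the sheaves you need, but that is itself a lemma of \cite{MS}/\cite{BRS} (essentially \(\gamma^n\widetilde{F}\cong\widetilde{\gamma^n F}\), proved via semipurity and Saito's purity theorem), and you need it for \(\widetilde{F}(n)\), which is not a priori of the form \(\widetilde{H}\); likewise the identification of the \(\widetilde{K}^M_n\)-twist with the \(n\)-fold \(\widetilde{\G_m}\)-twist rests on the nontrivial computation \((K^M_{n-1},\G_m)_{\RSC_{\Nis}}=K^M_n\). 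These statements are true, but in your write-up they are assertions, not proofs.
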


\begin{proposition}[{\cite{RSY}},  {\cite{BRS}}]
There are identities
\begin{itemize}
    \item \(\tilde{\Z}(n)= \underline{\omega}^*K^M_n \in \CI^{\tau, sp}_{\Nis}\)
    \item \(\Z\langle n\rangle = K^M_n \in \RSC_{\Nis}\) 
    \item In char \(k=0\):
    \begin{itemize}
        \item \(\tilde{\G_a}(n) = \tilde{\Omega}^n_{/\Z} \in \CI^{\tau, sp}_{\Nis}\)
        \item \(\G_a\langle n \rangle = \Omega^n_{/\Z} \in \RSC_{\Nis}\)
    \end{itemize}
\end{itemize}
\end{proposition}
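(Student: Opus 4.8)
The plan is to dispose of the two homotopy-invariant identities first, where the modulus structure is inert, and then bootstrap to the characteristic-zero $\G_a$-identities, where the genuine content lies. First I would record three elementary facts. Since $\Z$, $\G_m=K^M_1$ and, more generally, $K^M_n$ all lie in $\HI_{\Nis}$, their associated modulus presheaves are the ``constant'' ones $\tilde{\Z}=\underline{\omega}^*\Z$ and $\tilde{K^M_n}=\underline{\omega}^*K^M_n$; each of these is already $\boks$-invariant, semi-pure and a Nisnevich sheaf, hence lies in $\CI^{\tau,sp}_{\Nis}$. Moreover $\underline{\omega}^*\Z=\Z_{\tr}(\Spec k,\emptyset)$ is the monoidal unit of $\otimes_{\MPST}$. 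Granting these, the first identity is essentially formal:
\[
\tilde{\Z}(n)=h_0^{\boks}\bigl(\underline{\omega}^*\Z\otimes_{\MPST}\tilde{K^M_n}\bigr)^{sp}_{\Nis}
=h_0^{\boks}\bigl(\tilde{K^M_n}\bigr)^{sp}_{\Nis}
=\tilde{K^M_n}=\underline{\omega}^*K^M_n,
\]
the middle equality being the unit property and the last two using that on an object of $\CI^{\tau,sp}_{\Nis}$ the operations $h_0^{\boks}$, $(-)^{sp}$ and $(-)_{\Nis}$ all act as the identity.

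For $\Z\langle n\rangle=K^M_n$ I would induct on $n$. The base case is $\Z\langle 1\rangle=(\Z,\G_m)_{\RSC_{\Nis}}=\Z\otimes_{\HI_{\Nis}}\G_m=\G_m=K^M_1$, using part~1 of \Cref{thm:tensor} and that $\Z$ is the unit of $\otimes_{\HI_{\Nis}}$. For the step, $\Z\langle n\rangle=(K^M_{n-1},\G_m)_{\RSC_{\Nis}}$ by the inductive hypothesis; as $K^M_{n-1},\G_m\in\HI_{\Nis}$, part~1 of \Cref{thm:tensor} identifies this with the Voevodsky tensor $K^M_{n-1}\otimes_{\HI_{\Nis}}\G_m$, which equals $K^M_n$ by the standard computation of the $\HI_{\Nis}$-tensor powers of $\G_m$. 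Equivalently, one can deduce $\Z\langle n\rangle=K^M_n$ directly from the first identity by applying $\underline{\omega}_!$, using that $\underline{\omega}_!$ kills $(-)^{sp}$, commutes with $(-)_{\Nis}$, and satisfies $\underline{\omega}_!(\tilde{F}(n))=F\langle n\rangle$.

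In characteristic $0$ the sheaf $\G_a$ is not homotopy invariant, so $\tilde{\G_a}$ carries the genuine Kato--Russell filtration of \Cref{thm:Wn} and the two remaining identities are substantive. I would prove the $\CI^{\tau,sp}_{\Nis}$-identity $\tilde{\G_a}(n)=\tilde{\Omega^n_{/\Z}}$ and then transport it to $\RSC_{\Nis}$ via $\underline{\omega}_!(\tilde{\G_a}(n))=\G_a\langle n\rangle$ and $\underline{\omega}_!\tilde{\Omega^n_{/\Z}}=\Omega^n_{/\Z}$; the compatibility of the two twists under the adjunction $\underline{\omega}_!\dashv\underline{\omega}^{\CI}$ is the structural input from \cite{RSY}, \cite{MS}. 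The base case $n=1$ is precisely the explicit isomorphism $(\G_a,\G_m)_{\RSC_{\Nis}}\xrightarrow{\simeq}\Omega^1_{/\Z}$ of part~3 of \Cref{thm:tensor}. For the induction the natural candidate is the symbol map induced in $\MPST$ by $a\otimes\{u_1,\dots,u_n\}\mapsto a\,d\log u_1\wedge\cdots\wedge d\log u_n$, which after $h_0^{\boks}$, $(-)^{sp}$ and $(-)_{\Nis}$ yields a morphism $\tilde{\G_a}(n)\to\tilde{\Omega^n_{/\Z}}$.

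The main obstacle is the inductive step: showing that this symbol map --- equivalently the multiplication $(\Omega^{n-1}_{/\Z},\G_m)_{\RSC_{\Nis}}\to\Omega^n_{/\Z}$, $\omega\otimes u\mapsto\omega\wedge d\log u$ --- is an isomorphism. Surjectivity is formal, since $\Omega^n$ is generated Nisnevich-locally by forms $a\,d\log u_1\wedge\cdots\wedge d\log u_n$. Injectivity is where the modulus condition genuinely enters, and I expect it to be the hard part. The route I would take is to test the map on stalks and, using semi-purity together with the presentation \eqref{eq:Comp-Mod1}, reduce to the local filtrations at the henselian geometric valuation fields $L$: there one must match the filtration on the source --- built from the Kato--Russell filtration $\fil^F_r W_1(L)$ of $\tilde{\G_a}$ (\Cref{thm:Wn}) together with $d\log$'s --- against the differential filtration $\tfrac{1}{t^{r-1}}\Omega^n_{\mathcal{O}_L/\Z}(\log t)$ (resp.\ $\tfrac{1}{t^r}\Omega^n_{\mathcal{O}_L/\Z}$) of \Cref{thm:Omega}. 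This explicit local comparison is the technical heart of \cite{RSY} and \cite{RS-AS}; the cancellation theorem \Cref{thm:MSCancellation}, which makes the twist fully faithful via $\gamma^n(\tilde{F}(n))\cong\tilde F$, can be used to organize the injectivity argument. Once the local comparison is in hand, the remaining passages between $\CI^{\tau,sp}_{\Nis}$ and $\RSC_{\Nis}$ are bookkeeping with the adjunction $\underline{\omega}_!\dashv\underline{\omega}^{\CI}$ and with \Cref{thm:tensor}.
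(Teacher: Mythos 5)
Your handling of the first two identities is correct and is the intended argument: \(\tilde{\Z}=\underline{\omega}^*\Z\) is the unit of \(\otimes_{\MPST}\), \(\tilde{K^M_n}=\underline{\omega}^*K^M_n\) already lies in \(\CI^{\tau,sp}_{\Nis}\) so that \(h_0^{\boks}\), \((-)^{sp}\) and \((-)_{\Nis}\) act trivially on it, and \(\Z\langle n\rangle\) reduces through part~1 of \Cref{thm:tensor} to the classical identity \(K^M_{n-1}\otimes_{\HI_{\Nis}}\G_m=K^M_n\).

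The gap is in the characteristic-zero part, and it sits at your base case. The third bullet is an identity in \(\CI^{\tau,sp}_{\Nis}\), i.e.\ an identification of the full modulus filtrations \(\tilde{\G_a}(n)(X,D)=\tilde{\Omega}^n_{/\Z}(X,D)\) for every modulus pair, whereas part~3 of \Cref{thm:tensor} only identifies the images under \(\underline{\omega}_!\). But \(\underline{\omega}_!\) does not detect isomorphisms in \(\CI^{\tau,sp}_{\Nis}\): for any non-\(\A^1\)-invariant \(F\in\RSC_{\Nis}\) the distinct objects \(\tilde{F}\subsetneq\underline{\omega}^*F\) have the same image \(F\) under \(\underline{\omega}_!\). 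Semi-purity does give injectivity of the symbol map \(\tilde{\G_a}(1)\to\tilde{\Omega}^1_{/\Z}\) once one knows \((\G_a,\G_m)_{\RSC_{\Nis}}\cong\Omega^1_{/\Z}\), but surjectivity on each \((X,D)\) --- that every form with poles bounded as in \Cref{thm:Omega} is Nisnevich-locally a sum of symbols \(a\,d\log u\) with \(a\) of matching modulus --- is exactly the local filtration comparison you postpone to the inductive step. It is needed already at \(n=1\): transport along \(\underline{\omega}_!\) goes from the \(\CI\)-statement to the \(\RSC\)-statement, never back, which is why the paper's remark that the proof ``uses the computation of \(\tilde{\Omega^n_{/\Z}}(\mathcal{O}_L,\mathfrak{m}_L^{-r})\)'' applies to all \(n\geq 1\).

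Relatedly, the weight in your inductive step is misplaced. The computation of \Cref{thm:Omega} (matched against the filtration of \(\tilde{\G_a}\)) is needed \emph{before} any isomorphism question: to see that the symbol map exists in \(\MPST\) at all, i.e.\ that \(p^*a\wedge d\log q^*u\) has the modulus required to lie in \(\tilde{\Omega}^n_{/\Z}(\X\otimes\mathcal{Y})\), and again for surjectivity at the modulus level, which is not formal. What is comparatively soft is the \(\RSC\)-level statement: after reducing to function fields via \Cref{cor:RSC-ab} and \Cref{thm:ReciprocityPurity}, one has a surjection \(\Omega^n_K\to(\G_a,K^M_n)_{\RSC_{\Nis}}(K)\) whose composite back to \(\Omega^n_K\) is the identity --- this is how the cited proof runs, directly with \(K^M_n\) and no induction on \(n\) (compare the characteristic-\(p\) proposition proved immediately after this one in the paper). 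Finally, the compatibilities your induction uses silently, \(\underline{\omega}_!(\tilde{F}(n))=F\langle n\rangle\) and \((\tilde{F}(n-1))(1)=\tilde{F}(n)\), are themselves theorems resting on the cancellation theorem (\Cref{thm:MSCancellation}); since the lax tensor is not known to be associative (as the paper stresses), iterating \(\langle 1\rangle\) cannot be identified with the \(K^M_n\)-twist for free. You flag these as structural input, which is legitimate, but together with the base-case issue the induction as organized does not close.
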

The proof of the latter identities uses the computation of \(\tilde{\Omega^n_{/\Z}}(\mathcal{O}_L, \mathfrak{m}_L^{-r})\).

\begin{proposition}
Assume char$(k)=p\ge 7$. Then (see \eqref{eq:Binfty} for notation)
\[(\G_a, K^M_n)_{\RSC_{\Nis}}= \Omega^n/B_\infty\Omega^n.\]
\end{proposition}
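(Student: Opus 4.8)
The plan is to construct an explicit $d\log$ comparison morphism at the level of modulus presheaves and then to verify that it induces an isomorphism by reducing to the local modulus filtrations, where the explicit formulas of \Cref{thm:Wn} and \Cref{thm:Omega} apply. By the definition of the lax tensor product,
\[(\G_a, K^M_n)_{\RSC_{\Nis}} = \underline{\omega}_!\bigl(h_0^{\boks}(\tilde{\G_a} \otimes_{\MPST} \tilde{K^M_n})\bigr)_{\Nis},\]
so it suffices to understand the modulus presheaf $\tilde{\G_a}(n) = h_0^{\boks}(\tilde{\G_a}\otimes_{\MPST}\tilde{K^M_n})^{sp}_{\Nis}$ and to identify its image under $\underline{\omega}_!$. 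Since $\Omega^n/B_\infty\Omega^n$ lies in $\RSC_{\Nis}$ (see \eqref{eq:Binfty} and the discussion after \Cref{prop:deRhamWittReciprocity}), the strategy is to produce a natural transformation landing in it and to show it is an isomorphism.

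First I would construct the comparison map. At the level of presheaves with transfers there is a $d\log$ map
\[\psi\colon \G_a \otimes_{\PST} K^M_n \longrightarrow \Omega^n/B_\infty\Omega^n, \qquad a\otimes\{u_1,\dots,u_n\}\longmapsto a\, d\log u_1\wedge\cdots\wedge d\log u_n \bmod B_\infty,\]
which is well defined: the Steinberg relations in $K^M_n$ are sent to exact forms, which vanish modulo $B_\infty\supseteq d\Omega^{n-1}$, and compatibility with transfers follows from the transfer structures on $\G_a$, $K^M_n$ and $\Omega^n$ (\Cref{prop:KahlerReciprocity}, \Cref{prop:deRhamWittReciprocity}). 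The crucial point is that $\psi$ is compatible with the modulus, i.e.\ that it lifts to a morphism $\tilde{\G_a}\otimes_{\MPST}\tilde{K^M_n}\to \tilde{\Omega^n/B_\infty\Omega^n}$ in $\MPST$; granting this, applying $h_0^{\boks}$, semi-purification, Nisnevich sheafification and $\underline{\omega}_!$ produces a morphism
\[\overline{\psi}\colon (\G_a, K^M_n)_{\RSC_{\Nis}} \longrightarrow \Omega^n/B_\infty\Omega^n.\]
The modulus compatibility is itself a local assertion: via \eqref{eq:Comp-Mod1} it reduces to comparing $\tilde{\G_a}(\mathcal{O}_L,\mathfrak{m}_L^{-r})$ against $\tilde{\Omega^n/B_\infty}(\mathcal{O}_L,\mathfrak{m}_L^{-r})$ for henselian discrete valuation fields $L$ of geometric type.

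To prove that $\overline{\psi}$ is an isomorphism I would argue on the modulus filtrations over all such $L$, to which both injectivity and surjectivity reduce by the reconstruction \eqref{eq:Comp-Mod1} together with the purity theorem and Cousin resolution \eqref{eq:Cousin}. By \Cref{thm:Wn} the filtration on $\tilde{\G_a}$ is built from the Frobenius-saturated pieces $\sum_j F^j(t^{-r}\mathcal{O}_L)$, and the presence of the operators $F^j$ is precisely what the quotient by $B_\infty=\bigcup_r F^r dW_{1+r}\Omega^{n-1}$ absorbs, through the relation $C^{-1}(a\,\omega)=a^p\omega$ lifting Frobenius on products of $d\log$ forms. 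Matching these against the filtration of $\tilde{\Omega^n}$ from \Cref{thm:Omega}, distinguishing the cases $(p,r)=1$ and $p\mid r$, should identify $\tilde{\Omega^n/B_\infty}(\mathcal{O}_L,\mathfrak{m}_L^{-r})$ with the image of $\tilde{\G_a}(\mathcal{O}_L,\mathfrak{m}_L^{-r})$ under $\psi$. Alternatively, one can invoke the cancellation theorem \Cref{thm:MSCancellation}, which reduces the claim to the computation $\gamma^n(\Omega^n/B_\infty\Omega^n) \cong \G_a$, again verified through the same local formulas.

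The main obstacle is exactly this local matching of filtrations: one must show that $d\log$ carries $\tilde{\G_a}(\mathcal{O}_L,\mathfrak{m}_L^{-r})\otimes(\text{symbols})$ onto $\tilde{\Omega^n/B_\infty}(\mathcal{O}_L,\mathfrak{m}_L^{-r})$ with neither loss nor excess, controlling how the Frobenius powers $F^j$ interact with the log-pole filtration and its integral jumps. This is where the hypothesis $p\ge 7$ enters: it ensures that the arithmetic of the filtration jumps—the interplay of the $(p,r)=1$ and $p\mid r$ regimes of \Cref{thm:Omega} and \Cref{thm:Wn}, together with the Leibniz and Steinberg manipulations in Milnor $K$-theory and the de Rham--Witt differential—behaves uniformly, avoiding the small-prime anomalies at $p=2,3,5$ that would otherwise obstruct the clean identification of the two filtrations.
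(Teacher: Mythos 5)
Your construction of the comparison map $\overline{\psi}$ is essentially the paper's: there too one builds a morphism $(\G_a, K^M_n)_{\RSC_{\Nis}}\to \Omega^n/B_\infty$ from the $d\log$ pairing, with modulus compatibility checked through the local formulas of \Cref{thm:Omega} and \Cref{thm:Wn} and the endomorphism $\overline{F}$ of $\widetilde{\Omega^n/B_\infty}$, and the problem is then reduced, via \Cref{cor:RSC-ab} and \Cref{thm:ReciprocityPurity}, to showing that the map is an isomorphism on every function field $K$. Up to that point you and the paper agree.

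The gap is in how you handle this last step. Matching the filtrations $\tilde{\G_a}(\mathcal{O}_L,\mathfrak{m}_L^{-r})$ and $\widetilde{\Omega^n/B_\infty}(\mathcal{O}_L,\mathfrak{m}_L^{-r})$ under $d\log$ says nothing about injectivity or surjectivity of $\overline{\psi}(K)$: the source $(\G_a, K^M_n)_{\RSC_{\Nis}}(K)$ is an abstract quotient of $\tilde{\G_a}\otimes_{\MPST}\tilde{K^M_n}$ under $h_0^{\boks}$ and sheafification, i.e.\ a colimit over modulus correspondences. The local filtrations of the two factors control which maps \emph{out of} this object exist (that is what modulus compatibility means), but they give no presentation of the object itself, so ``matching filtrations with neither loss nor excess'' is not a meaningful reduction. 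What the paper actually does is produce a map in the \emph{opposite} direction: by \cite[Prop 5.18]{RSY} there is a surjection $\Omega^n_K\twoheadrightarrow (\G_a, K^M_n)_{\RSC_{\Nis}}(K)$, which encodes the nontrivial fact that the relations among differential forms (additivity, Leibniz, etc.) hold among the symbols $a\otimes\{u_1,\dots,u_n\}$ inside the tensor object; this is the only place where $p\ge 7$ enters, so your attribution of that hypothesis to ``small-prime anomalies in the filtration jumps'' is misplaced (the formulas of \Cref{thm:Omega} and \Cref{thm:Wn} hold for all $p$). One then checks, as in \cite[Cor 5.4.12]{IR}, that this surjection factors through $\Omega^n_K/B_\infty$, and that the composite $\Omega^n_K/B_\infty\to (\G_a, K^M_n)_{\RSC_{\Nis}}(K)\to \Omega^n_K/B_\infty$ is the identity; since the first map is surjective with a left inverse, both maps are isomorphisms. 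Your proposal contains no counterpart of this inverse-direction surjection. Your fallback via \Cref{thm:MSCancellation} does not close the gap either: cancellation gives $\gamma^n(F\langle n\rangle)\cong F$, but knowing $\gamma^n(\Omega^n/B_\infty)\cong \G_a$ does not imply $(\G_a, K^M_n)_{\RSC_{\Nis}}\cong \Omega^n/B_\infty$, because $\gamma^n$ is not conservative in the required sense and $(\G_a, K^M_n)_{\RSC_{\Nis}}$ is not even a priori equal to $\G_a\langle n\rangle$ --- the remark following the proposition stresses that this identification is itself not immediate.
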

\begin{proof}
Using the fact that we have a natural morphism 
$\widetilde{\Omega^n}\to \widetilde{\Omega^n/B_\infty}$ in $\CI^{\tau, sp}_{\Nis},$
and that the inverse Cartier isomorphism induces an endomorphism $\overline{F}:\widetilde{\Omega^n/B_\infty}\to \widetilde{\Omega^n/B_\infty}$
(cf. above \eqref{eq:ASW}), we can use Theorem \ref{thm:Omega} and Theorem \ref{thm:Wn}
to construct a  map $(\G_a, K^M_n)_{\RSC_{\Nis}}\to \Omega^n/B_\infty$ in $\RSC_{\Nis}$ similar 
as in the proof of \cite[Thm 5.20]{RSY}. By Corollary \ref{cor:RSC-ab} and Theorem \ref{thm:ReciprocityPurity}
it suffices to show that it is an isomorphism on any function field $K$.
By \cite[Prop 5.18]{RSY} there is a surjective map  $\Omega^n_K\to (\G_a, K^M_n)_{\RSC_{\Nis}}(K)$ (here we use $p\ge 7$).
The same proof as in \cite[Cor 5.4.12]{IR} shows that this map factors over the quotient $\Omega^n_K/B_\infty$.
By construction of the maps the composition 
\[\Omega^n_K/B_\infty\to (\G_a, K^M_n)_{\RSC_{\Nis}}(K)\to \Omega^n_K/B_\infty\]
is the identity, which completes the proof.
\end{proof}

\begin{remark}
Similarly, one can also show  $\G_a\langle n\rangle= \Omega^n/B_\infty$ (at least for $p\ge 7$). 
This is not immediate from the above. In the induction step $\Omega^n/B_\infty\langle 1\rangle=\Omega^{n+1}/B_\infty$
a description of $\widetilde{\Omega^n/B_\infty}(\mathcal{O}_L,\mathfrak{m}_L^{-n})$ is required. 
The latter group was computed by the second author and will appear somewhere else.
\end{remark}

 \begin{proposition}[{\cite[Thm 11.8]{BRS}}]
Assume $p>0$. There is a natural isomorphism 
\[W_r \Omega^{q-n}\xrightarrow{\simeq} \gamma^n(W_r \Omega^q),\]
which sends a Witt-differential form $\omega\in W_r \Omega^{q-n}(X)$
to the map 
\[\varphi_{\omega}\in \gamma^n(W_r \Omega^q)(X)
=\Hom_{\PST}(K^M_n,\underline{\Hom}_{\PST}(\Z_{\rm tr}(X), W_r \Omega^q)),\] 
which on $Y$ is given by
\[\varphi_{\omega}(Y):
 K^M_n(Y)\to W_r \Omega^q(X\times Y),\quad  a  \mapsto   p_X^*\omega\cdot d\log(p_Y^*a),\]
where $p_X$, $p_Y$ denote the two projections from $X\times Y$ to $X$, $Y$, respectively.
 \end{proposition}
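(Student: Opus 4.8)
The plan is to construct the map $W_r\Omega^{q-n} \to \gamma^n(W_r\Omega^q)$ explicitly via the formula $\omega \mapsto \varphi_\omega$ given in the statement, verify that it is a well-defined morphism in $\RSC_{\Nis}$, and then prove that it is an isomorphism by reducing to a check on function fields, where the purity theorem \Cref{thm:ReciprocityPurity} applies. First I would verify that $\varphi_\omega$ is indeed an element of $\gamma^n(W_r\Omega^q)(X) = \Hom_{\PST}(K^M_n, \underline{\Hom}_{\PST}(\Z_{\rm tr}(X), W_r\Omega^q))$: this requires checking that for each $Y$ the assignment $a \mapsto p_X^*\omega \cdot d\log(p_Y^*a)$ is multilinear, satisfies the Steinberg relation (so that it descends from symbols to Milnor $K$-theory), and is compatible with the transfers on $K^M_n$ and on $W_r\Omega^q$. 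The compatibility with transfers is where the de Rham--Witt projection formula and the explicit description of the correspondence action from the proof of \Cref{prop:deRhamWittReciprocity} enter, since one must commute $d\log$ of a transferred symbol past the pushforward $p_{X*}$.

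Next I would check that $\omega \mapsto \varphi_\omega$ is itself a morphism of presheaves with transfers, i.e. natural in $X$, which again reduces to the compatibility of $d\log$ and multiplication with the cycle-theoretic correspondence action on $W_r\Omega^\bullet$; this follows formally from the associativity and projection formulas already packaged into the transfer structure of the de Rham--Witt sheaves. Having a morphism in $\RSC_{\Nis}$, I would then invoke \Cref{cor:RSC-ab} to work in the abelian category $\RSC_{\Nis}$ and use \Cref{thm:ReciprocityPurity}: since the Cousin resolution \eqref{eq:Cousin} shows that a morphism of reciprocity sheaves is an isomorphism as soon as it induces an isomorphism on all the stalks $H^c_x$, and since \eqref{eq:DependsOnC} expresses these in terms of values on products of $\A^1$ and $\A^1 \setminus \{0\}$ with residue fields, it suffices to prove that the map is an isomorphism after evaluation on function fields $K$ over $k$.

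On a function field $K$ the target simplifies: $\gamma^n(W_r\Omega^q)(K) = \underline{\Hom}_{\PST}(K^M_n, W_r\Omega^q)(K)$, and by the cancellation-type input of \Cref{thm:MSCancellation} together with the computation $\Z\langle n\rangle = K^M_n$, one expects an explicit description of $\gamma^n(W_r\Omega^q)(K)$ in terms of the action of $d\log$ of Milnor symbols on $W_r\Omega^q_K$. The key algebraic fact is then the classical structure theorem for the de Rham--Witt complex of a field: every element of $W_r\Omega^q_K$ is a sum of basic Witt-differentials, and the top $d\log$-part yields a direct-sum decomposition realizing $W_r\Omega^q_K$ as $W_r\Omega^{q-n}_K \otimes \Lambda^n(d\log K^\times)$ locally, so that contracting against the $n$ independent $d\log$ symbols recovers $W_r\Omega^{q-n}_K$. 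I would construct an explicit inverse sending $\varphi \mapsto \varphi(K)(\{t_1,\dots,t_n\})$ contracted by $d\log t_i$, for a transcendence basis, and check it is two-sided inverse to $\omega \mapsto \varphi_\omega$.

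I expect the main obstacle to be the verification on function fields that the contraction map is well-defined and inverse to $\varphi_\omega$, rather than the formal reduction. The difficulty is twofold: the de Rham--Witt complex over an imperfect field $K$ is considerably more intricate than over a perfect field (the Frobenius and Verschiebung interact nontrivially with the differential), so identifying $\gamma^n(W_r\Omega^q)(K)$ with $W_r\Omega^{q-n}_K$ requires the full combinatorial description of $W_r\Omega^\bullet_K$ in terms of basic Witt-differentials à la Illusie--Raynaud, and one must show that the only maps $K^M_n \to W_r\Omega^q$ are of the expected $d\log$-form, with no exotic contributions coming from $p$-torsion phenomena or from the Verschiebung filtration. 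Controlling these possible extra terms, and confirming that the Steinberg relation together with $F$- and $V$-compatibility forces any $\varphi$ to arise from a unique Witt-form $\omega$, is the heart of the argument.
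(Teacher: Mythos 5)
Your outer frame is fine: the map $\omega\mapsto\varphi_\omega$ is indeed well defined (the $d\log$ map to $W_r\Omega^\bullet$ kills Steinberg symbols, and transfer-compatibility follows from the projection formula for the pushforward underlying \cref{prop:deRhamWittReciprocity}), and the reduction of the isomorphism question to function fields via \cref{cor:RSC-ab}, \Cref{thm:ReciprocityPurity} and \eqref{eq:Cousin} is legitimate — the paper uses exactly this reduction in its computation of $(\G_a,K^M_n)_{\RSC_{\Nis}}$. The gap is in the step you yourself call the heart, and it is threefold. (i) A section of $\gamma^n(W_r\Omega^q)$ over $K$ is a transfer-compatible family of maps $K^M_n(Y)\to W_r\Omega^q(Y\times U)$ for \emph{all} smooth $Y$ (with $U$ a model of $K$); it is not determined by the de Rham--Witt complexes of field extensions of $K$, so no ``combinatorial description of $W_r\Omega^\bullet_K$'' can by itself exclude exotic homomorphisms. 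What is actually needed is control of $W_r\Omega^q(\G_m^n\times U)$ (evaluation at the tautological symbol) together with a proof that cube-invariance and transfers kill the nonzero-weight and $V$-torsion contributions — i.e.\ a cancellation-type argument, which is precisely the content of the cited theorem, not an input you may assume. (ii) The structural claim underlying your contraction is not available: for $r\ge 2$ there is no decomposition of $W_r\Omega^q_K$ over the $d\log$ forms of a transcendence basis. Concretely, already $dV[t]\notin W_2(k(t))\cdot d\log[t]$: writing $dV[t]=\xi\,d\log[t]$ and applying $F$ (using $Fd\log[t]=d\log[t]$ and $FdV=d$) forces $F\xi=[t]$, i.e.\ $t$ to be a $p$-th power in $k(t)$. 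So the projection ``coefficient of $d\log t_1\cdots d\log t_n$'' that your inverse requires is undefined by this recipe, in contrast to the case $r=1$ where $\Omega^q_K=\bigoplus_I K\,d\log t_I$. (iii) The appeal to \Cref{thm:MSCancellation} is misdirected: cancellation gives $\gamma^n(F\langle n\rangle)\cong F$, so to use it here you would need $W_r\Omega^q\cong W_r\Omega^{q-n}\langle n\rangle$, which fails in characteristic $p$ — the paper itself records $\G_a\langle n\rangle=\Omega^n/B_\infty\neq\Omega^n$ (see \eqref{eq:Binfty} and the remark following the proposition on $(\G_a,K^M_n)_{\RSC_{\Nis}}$).

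The paper's own argument bypasses all of this and is purely cohomological: by cube-invariance of cohomology (\cite[Thm 9.3]{Sai}) one has $(\gamma^1F)_X\cong R^1\pi_*F_{X\times\P^1}$ for $\pi\colon\P^1_X\to X$ and any reciprocity sheaf $F$; since $W_r\Omega^q$ is a successive extension of subquotients of $\Omega^q$ (and $\Omega^{q-1}$), the classical equality $R^1\pi_*\Omega^q_{X\times\P^1}=\Omega^{q-1}_X$ and d\'evissage (note $\pi_*F_{X\times \P^1}=F_X$, so $\gamma^1$ is exact on such extensions) give $\gamma^1(W_r\Omega^q)\cong W_r\Omega^{q-1}$, and iterating $\gamma^1$ yields the statement for $\gamma^n$. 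If you want to salvage your route, the missing ingredient is exactly such an identification of $\gamma^1$ with a $\P^1$-direct image (or, alternatively, an honest cancellation-style analysis over $\G_m^n\times U$); the purity reduction alone does not supply it.
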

As $W_r\Omega^q$ is a successive extension of certain subquotients of $\Omega^q$ 
this is a consequence of the well known equality  $R^1\pi_*\Omega^q_{X\times \P^1}= \Omega^{q-1}_X$ 
and the fact that a reciprocity sheaf $F$ satisfies 
\[R^1\pi_*F_{X \times \P^1} = (\gamma^1F)_X,\]
where \(\pi \colon \P^1_X \to X\). This is a consequence of the cube-invariance of the cohomology of $F$, see \cite[Thm 9.3]{Sai}.

\section{Cohomology of reciprocity sheaves}\label{sec:CohRec}
In this section we explain some structural results about the cohomology of reciprocity sheaves such as a projective bundle formula, a blow-up formula, a Gysin sequence, the existence of a proper pushforward and the existence of an action of Chow correspondences. 
This has consequences outside the theory of reciprocity sheaves. For example, we obtain new birational invariants of smooth projective varieties 
and obstructions to the existence of zero-cycles of degree one. We survey these applications at the end of this section.

\subsection{Structural results}

\begin{theorem}[Blow-up formula, {\cite[Cor 7.3]{BRS}}]
Let \(G \in \CI^{\tau, sp}_{\Nis}\) and \(\X=(X, D) \in \MCor_{ls}\) 
(see Definition \ref{defn:transversal} for notation). Assume that \(i \colon Z \hookrightarrow X\) is a closed immersion of codimension \(c\) that is transversal to \(D\) (see \ref{defn:transversal}). Furthermore, let \(\rho \colon \tilde{X} \to X\) denote the blow-up of \(X\) in \(Z\), and let  \(\tilde{\X} := (\tilde{X}, D_{\vert \tilde{X}}),\) and \(\mathcal{Z} := (Z, D_{\vert Z})\). Then
\[R\rho_*G_{\tilde{\X}} \cong G_{\X} \oplus \bigoplus_{i=1}^{c-1}i_* \gamma^i G_{\mathcal{Z}}[-i].\]
\end{theorem}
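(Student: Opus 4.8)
The plan is to reduce the formula to the projective bundle formula applied to the exceptional divisor, splitting off the ``trivial'' summand $G_{\X}$ via the adjunction unit and identifying the remaining contribution---supported on $Z$---through Gysin pushforwards. First I would note that the asserted isomorphism is local on $X$ for the Nisnevich topology: both sides are objects of the derived category of Nisnevich sheaves on étale $X$-schemes (\Cref{defn:mod-sheaf}), $\rho$ is proper so $R\rho_*$ commutes with passage to Nisnevich stalks, and away from $Z$ the map $\rho$ is an isomorphism, so only the stalks over $Z$ are at issue. Using the transversality of $Z$ to $D$ (\Cref{defn:transversal}) together with the local structure of ls modulus pairs, I would reduce to a standard model in which, étale-locally around a point of $Z$, we have $Z = \{x_1 = \cdots = x_c = 0\}$ a coordinate subspace and $D$ supported on coordinate hyperplanes meeting $Z$ properly.

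In this model one checks that the total transform $D_{\vert\tilde X} = \rho^* D$ makes $\tilde{\X} = (\tilde X, D_{\vert\tilde X})$ and $\mathcal{E} := (E, D_{\vert E})$ into ls modulus pairs, where $E = \rho^{-1}(Z) = \P(N_{Z/X})$ is the exceptional divisor, a $\P^{c-1}$-bundle $\rho_E \colon E \to Z$ over $Z$; transversality is exactly what guarantees that the modulus is not disturbed along $E$. The key computational input is then the projective bundle formula for reciprocity sheaves, obtained by bootstrapping the identity $R^1\pi_* F_{X\times\P^1} \cong (\gamma^1 F)_X$ for $\pi \colon \P^1_X \to X$ (a consequence of cube-invariance of cohomology) from $\P^1$ to $\P^{c-1}$. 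In the modulus setting it reads
\[
R(\rho_E)_*\, G_{\mathcal{E}} \;\cong\; \bigoplus_{i=0}^{c-1} \gamma^i G_{\mathcal{Z}}[-i].
\]

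Next I would construct the comparison map. The adjunction unit $G_{\X} \to R\rho_* \rho^* G_{\X} = R\rho_* G_{\tilde{\X}}$ supplies the first summand. For the summands with $1 \le i \le c-1$ I would use the Gysin pushforward along the closed immersion $E \hookrightarrow \tilde X$, precomposed with multiplication by powers of the tautological class of $\mathcal{O}_E(1)$ and combined with the projective bundle structure $\rho_E$; after applying $R\rho_*$ these factor through $i \colon Z \hookrightarrow X$. Assembling these yields a morphism
\[
\Phi \colon G_{\X} \oplus \bigoplus_{i=1}^{c-1} i_* \gamma^i G_{\mathcal{Z}}[-i] \longrightarrow R\rho_* G_{\tilde{\X}}.
\]

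To prove that $\Phi$ is an isomorphism I would pass to Nisnevich stalks and, via \Cref{cor:RSC-ab} and the purity theorem \Cref{thm:ReciprocityPurity}, reduce the verification to the henselian local rings and function fields that control the cohomology through the Cousin resolution \eqref{eq:Cousin}. Away from $Z$ the map $\Phi$ is the identity, since $\rho$ is an isomorphism there; along $Z$, the localization (blow-up) triangle comparing $G_{\X}$, $R\rho_* G_{\tilde{\X}}$ and the contribution of $E$ identifies the cone of the adjunction map with $\bigoplus_{i=1}^{c-1} i_* \gamma^i G_{\mathcal{Z}}[-i]$ exactly through the projective bundle computation above, the $i=0$ term being absorbed into $G_{\X}$. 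The main obstacle I anticipate is twofold: first, controlling the pullback of the modulus so that the projective bundle formula applies verbatim on $E$ with the correct twists $\gamma^i$; and, more seriously, showing that the localization triangle genuinely splits as a direct sum rather than merely being a distinguished triangle. The latter requires producing the Gysin splittings explicitly and verifying their mutual compatibility, and it is precisely here that the semi-purity of $G$ and the precise form of the purity isomorphism \eqref{eq:DependsOnC} must be brought to bear.
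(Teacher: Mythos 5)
Your proposal has the right shape in outline (blow-up triangle plus projective bundle formula plus a splitting), but it presupposes exactly the tools whose construction is the actual content of the proof, and in an order that does not work. The paper's point is that the proofs of the blow-up formula and of the projective bundle formula are \emph{intertwined}: one must first establish the blow-up distinguished triangle, whose essential difficulty is the vanishing $R^i\rho_*G_{(Y,\rho^*L)}=0$ for $i\ge 1$, where $\rho\colon Y\to \A^2$ is the blow-up at the origin and $L$ a line through the origin. This vanishing is \emph{not} a routine localization statement: for non-$\A^1$-invariant $G$ there is no cdh-descent or off-the-shelf localization triangle, and the proof (\cite[Lem 2.13]{BRS}) hinges on a delicate modulus-descent result (\cite[Prop 2.5]{BRS}), made necessary because naive manipulations of modulus correspondences fail --- the paper illustrates this with the map $\psi_0\colon x\mapsto ys$, which does \emph{not} define a modulus correspondence $\boks^{(1)}\otimes\boks^{(1)}\to\boks^{(1)}\otimes\boks^{(1)}$. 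Your proposal invokes ``the localization (blow-up) triangle'' as given, which is precisely the missing step. Moreover, the projective bundle formula you take as ``key computational input'' is itself proved by induction \emph{using the blow-up triangle in the induction step}; so deriving the triangle's cone from the PBF, as you do, inverts the actual logical structure. Similarly, the Gysin pushforward along $E\hookrightarrow \tilde{X}$ that you use to build the comparison map $\Phi$ is, in this theory, \emph{constructed from} the blow-up and projective bundle formulas (\Cref{thm:gysin} comes after both), so your construction of $\Phi$ is circular. In the paper, once the triangle and the PBF are in place, the splitting is obtained following Voevodsky's method with $\boks$-invariance replacing $\A^1$-invariance --- this is the comparatively standard part, not (as you anticipate) the most serious one.

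A secondary problem: your verification step reduces to stalks ``via \Cref{cor:RSC-ab} and the purity theorem \Cref{thm:ReciprocityPurity}, \dots through the Cousin resolution \eqref{eq:Cousin}.'' But purity and the Cousin resolution apply to $F\in\RSC_{\Nis}$ evaluated on smooth schemes \emph{without} modulus, i.e., to $F_X$ with $F=\underline{\omega}_!G$; they do not apply verbatim to the modulus sheaves $G_{\X}$, $G_{\tilde{\X}}$ with $D\neq\emptyset$ that appear in the statement. Controlling cohomology of $G_{(X,D)}$ is exactly where the modulus-specific difficulties live, and no analogue of the Cousin complex is available for that purpose in the paper.
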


\begin{theorem}[Projective bundle formula, {\cite[Thm 6.3]{BRS}}]\label{thm:pbf}
Let \(G \in \CI^{\tau, sp}_{\Nis}\) and \(\X=(X, D) \in \MCor_{ls}\). Assume that \(\pi \colon P \to X\) is a projective bundle of rank \(n\), and let \(\mathcal{P} := (P, D_{\vert P})\). Then 
\[R\pi_*G_{\mathcal{P}} \cong \bigoplus_{i=0}^n (\gamma^iG)_{\X}[-i].\]
\end{theorem}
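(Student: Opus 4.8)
The plan is to build the comparison map out of the Chern class of the tautological bundle and then to check it is a quasi-isomorphism Nisnevich-locally on $X$, reducing to the trivial bundle and inducting on the rank $n$. First I would construct a canonical map
\[
\Phi \colon \bigoplus_{i=0}^n (\gamma^i G)_{\X}[-i] \longrightarrow R\pi_* G_{\mathcal P},
\]
whose degree-$0$ component is the pullback unit $G_{\X}\to \pi_* G_{\mathcal P}$ and whose degree-$i$ component is iterated cup product with the first Chern class $c_1(\mathcal O_P(1))\in H^1_{\Nis}(P,\G_m)=\mathrm{Pic}(P)$ of the tautological line bundle. Here the cup product raises cohomological degree while lowering the twist from $\gamma^i G$ to $G$ through the natural contraction $\gamma^i G\to\gamma^{i-1}G$ given by multiplication by a unit in Milnor $K$-theory, which is available since $\gamma^i G=\underline{\Hom}_{\PST}(K^M_i,G)$ and $\G_m=K^M_1$. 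The crucial feature is that $\mathcal O_P(1)$ is intrinsic to $\P(E)$, so $\Phi$ is independent of any trivialization of $E$ and is in particular compatible with Nisnevich-local gluing on $X$.

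Because $\Phi$ is canonical, to prove it is an isomorphism I may restrict to a Zariski cover of $X$ over which $E$ trivializes, and hence reduce to the case $P=\P^n\times X$ with $\pi$ the projection. By \Cref{cor:RSC-ab} and the purity theorem (\Cref{thm:ReciprocityPurity}) both sides are complexes of Nisnevich sheaves whose cohomology sheaves lie in $\RSC_{\Nis}$, so it suffices to check the assertion at this level. The whole point of working with $G\in\CI^{\tau,sp}_{\Nis}$ is that the modulus divisor $D_{\vert P}$ is purely horizontal (pulled back from $X$), so it will pass unobstructed through all the fibrewise arguments below.

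For the base case $n=1$ I would show $\pi_* G_{\mathcal P}\cong G_{\X}$ using $\boks$-invariance together with semi-purity (sections of $G$ over the proper fibre $\P^1$ with the horizontal modulus reduce to sections over the base), $R^1\pi_* G_{\mathcal P}\cong(\gamma^1 G)_{\X}$ by the cube-invariance computation $R^1\pi_*=\gamma^1$ of \cite[Thm 9.3]{Sai} (carried over to the modulus $D$ since it is horizontal), and $R^{\geq 2}\pi_*=0$ because the fibres are curves; one then checks $\Phi$ realizes these identifications. For the inductive step I would use the decomposition $\P^n_X=\A^n_X\sqcup\P^{n-1}_X$, with closed immersion $\iota\colon\P^{n-1}_X\hookrightarrow\P^n_X$ of codimension one and open complement $j\colon\A^n_X\hookrightarrow\P^n_X$. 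The localization triangle for $G_{\mathcal P}$ together with \Cref{thm:ReciprocityPurity} identifies the closed stratum's contribution: the codimension-one case of \eqref{eq:DependsOnC} produces exactly a $\gamma^1$-twist and a shift by $[-1]$, reducing it, via the inductive hypothesis applied to $\P^{n-1}_X$, to $\bigoplus_{i=1}^n(\gamma^i G)_{\X}[-i]$. The open stratum $\A^n_X$ collapses to $G_{\X}$ in degree $0$ by cube-invariance of the cohomology of $G$ along the fibre directions. Splicing these through the long exact sequence, and using the compatibility of $\Phi$ with $c_1(\mathcal O_P(1))$ under $\P^{n-1}\subset\P^n$, yields the claimed splitting.

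The hard part will be the bookkeeping of the vertical (fibre) directions in the absence of homotopy invariance: every ``collapse'' along a fibre must be extracted from $\boks$-invariance of the cohomology of $G$ and from the purity theorem rather than from naive $\A^1$-invariance, and one must verify that the purely horizontal modulus $D_{\vert P}$ is correctly transported through the localization sequence and that the codimension-one Gysin map is precisely the $\gamma^1$-twist predicted by \eqref{eq:DependsOnC}. The most delicate point is showing that the geometrically defined map $\Phi$ (cup product with $c_1(\mathcal O_P(1))$) agrees with the abstract connecting isomorphisms produced by the induction, so that the splitting is exactly the stated one and is natural in $\X$.
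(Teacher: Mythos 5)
Your construction of the comparison map $\Phi$ via cup product with $c_1(\mathcal{O}_P(1))$, and the reduction to the trivial bundle by locality on $X$, are consistent with what is done in \cite{BRS}; your base case $n=1$ is also fine (it is exactly the $\P^1$-invariance of cohomology from \cite[Thm 9.3]{Sai}). The inductive step, however, is where the proof breaks, and it breaks for the very reason reciprocity sheaves exist. You claim the open stratum of $\P^n_X=\A^n_X\sqcup\P^{n-1}_X$ ``collapses to $G_{\X}$ in degree $0$ by cube-invariance along the fibre directions''. Cube-invariance is invariance for $\boks=(\P^1,\infty)$, i.e., it controls sections and cohomology of a pair carrying a modulus at infinity; it says nothing about the bare open complement $\A^n_X$, which is what enters the localization triangle, with no modulus at the removed hyperplane. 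Reciprocity sheaves are precisely not $\A^1$-invariant: for $G=\widetilde{\G_a}$ and $D=\emptyset$ the term $R\pi_*Rj_*(G|_{\A^n_X})$ has $H^0$ a polynomial algebra over $\mathcal{O}(X)$, not $\mathcal{O}(X)$. You flag this at the end as delicate ``bookkeeping'', but it is not bookkeeping: the collapse you need is simply false, so the localization sequence does not degenerate as required.

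The closed-stratum identification is not available at this stage either. Saito's purity (\Cref{thm:ReciprocityPurity}) is a point-wise statement for $F\in\RSC_{\Nis}$ on smooth schemes without modulus, and the isomorphism \eqref{eq:DependsOnC} is non-canonical; it does not yield the Gysin isomorphism $R\underline{\Gamma}_{\P^{n-1}_X}(G_{\mathcal{P}})\cong \iota_*(\gamma^1 G)[-1]$ for the modulus sheaf $G_{\mathcal{P}}$. In this theory the correct Gysin statement is \Cref{thm:gysin}, whose third term is not $Rj_*$ of the open complement but $R\rho_*G$ on a blow-up with the exceptional divisor added to the modulus, and it is proven in \cite{BRS} as a consequence of the projective bundle and blow-up formulas, so invoking it here would be circular. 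The paper's route avoids open strata altogether: one first establishes a blow-up distinguished triangle, whose essential point is the vanishing $R^i\rho_*G_{(Y,\rho^*L)}=0$, $i\ge 1$, for the blow-up $\rho\colon Y\to\A^2$ of the origin with $L$ a line through it --- this is where the genuinely new, modulus-specific difficulty lives and where the modulus-descent result \cite[Prop 2.5]{BRS} is needed --- and then proves the projective bundle formula by induction on $n$ from $\P^1$-invariance and the blow-up triangle; only afterwards is the blow-up triangle split using the projective bundle formula, as in Voevodsky's argument but with $\boks$-invariance replacing $\A^1$-invariance.
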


The proofs of these two theorems are  intertwined and  go beyond the scope of these lectures.
We just mention some points to compare with classical arguments in the $\A^1$-invariant case:

First one proves that there is a blow-up distinguished triangle (we comment on the proof below).
The projective bundle formula is then proven by induction starting from the $\P^1$-invariance of the cohomology, which is a consequence of the 
cube-invariance proven in \cite[Thm 9.3]{Sai}, and  using  the blow-up triangle in the induction step.
Using the projective bundle formula one can then, similar as Voevodsky, construct a splitting of the blow-up triangle,
where one uses $\boks$-invariance instead of $\A^1$-invariance. 

The essential point in the  proof of the blow-up triangle is to show  the vanishing
\[R^i\rho_* G_{(Y, \rho^*L)}=0,\quad i\ge 1,\]
where $\rho:Y\to \A^2$ is the blow-up in the origin $0$ and $L\subset \A^2$ is a line containing $0$.
Denote by $\pi: Y\to \P^1$ the projection to the exceptional divisor of the blow-up. Then it is not hard to see, that the above vanishing 
is implied by the vanishing
\[H^1(\P^1, \pi_*G_{(Y,\rho^*L)})=0.\]
This is shown in \cite[Lem 2.13]{BRS}. The proof is  a bit technical. However we want to point out that in the course of this proof
one is confronted with certain modulus related problems which do not come up in the $\A^1$-invariant story.
This is why a crucial ingredient in the proof is the following modulus-descent result:
Consider the morphism $\psi_0:\A^1_y\times \A^1_s\to \A^1_x\times \A^1_s$ given
by the $k[s]$-algebra morphism $k[x,s]\to k[y,s]$, $x\mapsto ys$.
It induces a map
\[\psi: \boks^{(1)}_y\otimes \boks^{(2)}_s\to \boks^{(1)}_x\otimes \boks^{(1)}_s \quad \text{in } \MCor,\]
where \(\boks^{(n)}= (\P^1, n \cdot \{0\} + n \cdot \{\infty\})\).
Indeed to check this denote by $\Gamma\subset \P^1_y\times \P^1_{s}\times \P^1_x$
the closure of the graph of $\psi_0$ (as a morphism over $\A^1_s$).
Then the claim holds by the  following identities of divisors on $\Gamma$
\[2 \cdot \{0_{s}\}+ \{0_y\}= \{0_x\}+ \{0_{s}\}, \quad 
  2\cdot \{\infty_{s}\}+ \{\infty_y\}= \{\infty_x\}+ \{\infty_s\},\]
  \[\{0_y\}=\{0_x\}+\{\infty_s\},\quad \{\infty_y\}=\{\infty_x\}+\{0_s\}.\]
In particular, $\psi_0$ does {\em not} define a modulus correspondence 
from $\boks^{(1)}_y\otimes \boks^{(1)}_s$ to $\boks^{(1)}_x\otimes \boks^{(1)}_s$.
\begin{proposition}[{\cite[Prop 2.5]{BRS}}]
Let $G\in \CI^{\tau, sp}_{\Nis}$. With the notation from above 
 $\psi^*$ factors for $\X\in \MCor_{ls}$ as follows
\begin{center}
    \begin{tikzcd}
    & G(\boks^{(1)}_y\otimes \boks^{(1)}_s\otimes \X)\arrow[d]\\
    G(\boks^{(1)}_x\otimes \boks^{(1)}_s\otimes\X)\arrow[r, "\psi^*"] \arrow[ru, dotted] &  G(\boks^{(1)}_y\otimes \boks^{(2)}_s\otimes\X),
    \end{tikzcd}
\end{center}
where the vertical map is induced by the natural morphism $\boks^{(2)}_s\to \boks^{(1)}_s$
and it is injective by the semipurity of $G$.
\end{proposition}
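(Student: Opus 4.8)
The plan is to prove the factorisation entirely at the level of sections of $G$, reducing it to a modulus estimate on henselian discrete valuation fields. Write $\X=(X,D)$. First I would observe that the identity of $\P^1$ defines a morphism $\boks^{(2)}_s\to\boks^{(1)}_s$ in $\MCor$, and that the resulting vertical map $G(\boks^{(1)}_y\otimes\boks^{(1)}_s\otimes\X)\to G(\boks^{(1)}_y\otimes\boks^{(2)}_s\otimes\X)$ is, by semi-purity of $G$, just the inclusion of one subgroup of $G$ on the common open scheme $\G_m\times\G_m\times(X\setminus D)$ into another; in particular it is injective, so the dotted arrow exists if and only if the image of $\psi^*$ is contained in the image of this inclusion. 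On the open complements the correspondence $\psi$ is the graph of the isomorphism $\psi_0\colon(y,s)\mapsto(ys,s)$, so under the semi-purity embeddings $\psi^*$ is identified with the restriction map $\psi_0^*$. Thus the statement reduces to showing that for $a\in G(\boks^{(1)}_x\otimes\boks^{(1)}_s\otimes\X)$ the section $\psi_0^*a$, which a priori only lies in the group with modulus $2$ along $\{0_s\}+\{\infty_s\}$, in fact already has modulus $1$ there.

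Second, I would test this improved bound against \eqref{eq:Comp-Mod1}. For a henselian discrete valuation field $L$ and a point $\rho$ of the open part, writing $n=v_L(y)$ and $m=v_L(s)$, I must check that $\rho^*\psi_0^*a=(\psi_0\circ\rho)^*a$ lies in $\tilde{G}(\mathcal O_L,\mathfrak m_L^{-M})$ with $M=|n|+|m|+c$ the value of the modulus-$1$ divisor, where $c=v_L(\rho^*D)$ records the modulus along $\X$. The modulus of $a$ itself, applied to the point $\psi_0\circ\rho$ with $v_L(x)=n+m$, only furnishes membership at level $N=|n+m|+|m|+c$; the identities $\{0_x\}=\{0_y\}+\{0_s\}$ and $\{\infty_x\}=\{\infty_y\}+\{\infty_s\}$ recorded before the statement are exactly what makes $N$ the relevant number, and they show that the discrepancy is $N-M=|n+m|-|n|$. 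When $n$ and $m$ have opposite signs this is $\le 0$ and there is nothing to prove; the genuine content is concentrated in the range $|n+m|>|n|$, that is, at points approaching the corners $\{0_x\}\cap\{0_s\}$ and $\{\infty_x\}\cap\{\infty_s\}$, where one unit of modulus must be removed.

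The heart of the matter, and the step I expect to be the main obstacle, is to show that this excess is spurious for every $G\in\CI^{\tau,sp}_{\Nis}$. The underlying reason is that a section attains the modulus-$1$ bound along $\{0_x\}$ and $\{\infty_x\}$ logarithmically, and logarithmic behaviour is stable under the multiplication $x=ys$: the model computation is $a=\mathrm{d}\log x$, whose pullback $\mathrm{d}\log y+\mathrm{d}\log s$ manifestly still has modulus $1$ along $s$, even though $v_L(x)=v_L(y)+v_L(s)$ may ramify. Since no explicit description of $\tilde{G}$ is available in general, and since $\psi_0$ is provably not a modulus correspondence from $\boks^{(1)}_y\otimes\boks^{(1)}_s$ (so that no factorisation of $\psi$ exists inside $\MCor$), the strategy is to capture this cancellation using the defining property of $\CI^{\tau,sp}_{\Nis}$, namely cube-invariance: the multiplication map is a modulus morphism $\boks^{(1)}\otimes\boks^{(1)}\to\boks^{(1)}$, and I would exploit the $\G_m$-action on $\boks^{(1)}$ underlying the shear $\psi_0$ to build an auxiliary correspondence over the cube $\boks$ whose effect on $a$ exhibits the excess term as a boundary for $i_0^*-i_1^*$, hence as zero on the cube-invariant $G$. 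The delicate points — arranging the auxiliary correspondence to carry only modulus $1$ along $s$ while still meeting $\psi_0$, and controlling the exceptional locus of $\Gamma^N$ through the remaining identities $\{0_y\}=\{0_x\}+\{\infty_s\}$ and $\{\infty_y\}=\{\infty_x\}+\{0_s\}$ — are precisely the modulus-bookkeeping subtleties that are invisible in the $\A^1$-invariant theory, and this is where the real work lies.
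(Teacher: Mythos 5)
Your opening reduction is correct: by semi-purity both groups inject compatibly into the sections of $\underline{\omega}_!G$ over the common open $\G_m\times\G_m\times(X\setminus D)$, so the vertical map is an inclusion of subgroups and the factorisation amounts to showing that the image of $\psi^*=\psi_0^*$ lies in the smaller subgroup. (The paper itself does not reproduce a proof of this proposition --- it is quoted from \cite[Prop 2.5]{BRS} --- so I assess your argument on its own terms.) The first genuine gap is your second step: the criterion \eqref{eq:Comp-Mod1} is available, per the paper, only for modulus sheaves of the form $\tilde{F}$ with $F\in\RSC_{\Nis}$, and only for \emph{proper} modulus pairs. Here $G$ is an arbitrary object of $\CI^{\tau, sp}_{\Nis}$; this generality is the whole point, since in \cite{BRS} the proposition is applied, inside the proofs of the blow-up and projective bundle formulas, to sheaves that are not of the form $\tilde{F}$, and neither the paper nor your proposal supplies a henselian-DVF-point criterion for membership in $G(\mathcal{Y})$ for such $G$. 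Moreover the pairs $\boks^{(1)}_y\otimes \boks^{(1)}_s\otimes\X$ are not proper, so even for $G=\tilde{F}$ you would have to pass through a compactification and the $M$-reciprocity limit before invoking \eqref{eq:Comp-Mod1}. There is also a computational slip: ``when $n$ and $m$ have opposite signs there is nothing to prove'' is false (take $n=1$, $m=-5$: then $|n+m|-|n|=3>0$); the correct dividing line is $|n+m|\le |n|$, so the problematic locus is not confined to the two corners you name.

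Second, and decisively, the heart of the argument is announced rather than carried out. You correctly observe that $\psi$ admits no factorisation in $\MCor$ itself, so the statement can only be proved by playing off the defining properties of $G$ (cube-invariance, semi-purity, the sheaf condition), and your idea of exhibiting the modulus excess as a boundary for $i_0^*-i_1^*$ via an auxiliary correspondence over $\boks$ is the right sort of mechanism. But you never write down such a correspondence, never verify that it satisfies the required modulus inequalities on the normalisation of its closure --- which, as your own divisor bookkeeping makes clear, is exactly where the difficulty sits --- and you explicitly defer this step (``this is where the real work lies''). Since the proposition \emph{is} precisely this modulus-descent statement, what you have is a correct framing of the problem together with a plausible strategy, not a proof; closing the gap requires the explicit construction and modulus verification that constitute the proof given in \cite[Prop 2.5]{BRS}.
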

Let us illustrate the above proposition in  the example $G=\widetilde{\Omega^2}_{/k}$ and $\X=(\Spec k, \emptyset)$. 
In this case  $G(\boks^{(1)}_x\otimes \boks^{(1)}_s)= k\cdot {\rm dlog}(x){\rm dlog}(s)$ and we have 
$\psi^*({\rm dlog}(x){\rm dlog}(s))= {\rm dlog}(ys){\rm dlog}(s)={\rm dlog}(y){\rm dlog}(s)\in G(\boks^{(1)}_y\otimes \boks^{(1)}_s)$.

\begin{example}
We spell-out the projective bundle formula in two concrete cases (the blow-up formula is similar). Let the situation be as in \Cref{thm:pbf}.
\begin{itemize}
    \item In \(\text{char}(k)=0\) we have
    \[R\pi_*\Omega^j_{P/\Z}(\log D_{\vert P})(D_{\vert P} -  D_{\vert P, {\rm red}} ) = 
    \bigoplus_{i=0}^n \Omega^{j-i}_{X/\Z}(\log D) (D-  D_{\rm red})[-i].\]
    Taking \(D=\emptyset\) recovers the classical projection bundle formula.
    \item In positive characteristic $p$ with \(D=\emptyset\), we have
    \[R\pi_* \left( R^{j+1} \epsilon_* \Z/p^r(j)\right)_P = \bigoplus_{i=0}^n \left( R^{j-i+1}\epsilon_* \Z/p^r(j-i)\right)_X[-i].\]
    Note that this can also be deduced from the projective bundle formula for the Hodge-Witt cohomology  by Gros \cite[I, Thm 4.1.11]{Gros}.
\end{itemize}
We remark, that in the second example we have to take the empty divisor since 
the formula $\gamma^i (R^{j+1} \epsilon_* \Z/p^r(j))_{(X,D)}=(R^{j-i+1} \epsilon_* \Z/p^r(j-i))_{(X,D)}$ is only known for $D=\emptyset$,
in which case it follows from the exactness of $\gamma^i$ and \cite[Thm 11.8]{BRS}. In characteristic zero we have 
$\gamma^i\widetilde{\Omega^j_{/\Z}}=\widetilde{\Omega^{j-i}_{/\Z}}$ by \cite[Cor 11.2]{BRS}.
\end{example}


Similar to Voevodsky's (\cite[Prop. 3.5.4]{TriCa}) we obtain a Gysin triangle.
\begin{theorem}[Gysin sequence {\cite[Thm 7.16]{BRS}}]\label{thm:gysin}
Let \(G \in \CI^{\tau, sp}_{\Nis}\) and \(\X=(X, D) \in \MPST_{ls}\). Assume that \(i \colon Z \hookrightarrow X\) is a closed immersion 
of codimension \(c\) that is transversal to \(D\), in the sense of \ref{defn:transversal}. Furthermore, let \(\mathcal{Z} := (Z, D_{\vert Z})\), and let \(\rho \colon \tilde{X} \to X\) denote the blow-up of \(Z\) in \(X\) with \(E = \rho^{-1}(Z)\) the exceptional divisor. Then there is an exact triangle
\[i_* \gamma^c G_{\mathcal{Z}}[-c] \overset{g_{\mathcal{Z}/\X}}{\to} G_{\X} \to R\rho_*G_{(\tilde{X}, D_{\vert \tilde{X}+E})} \overset{\partial}{\to} i_* \gamma^c G_{\mathcal{Z}}[-c+1]\]
in \(D(X_{\Nis})\).
\end{theorem}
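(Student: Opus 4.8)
The plan is to deduce the Gysin triangle formally from the blow-up formula and the projective bundle formula (\Cref{thm:pbf}), the only genuinely new input being a codimension-one localization triangle along the exceptional divisor. Write $j\colon E\hookrightarrow\tilde X$ for the inclusion of the exceptional divisor and $\pi_E:=\rho|_E\colon E\to Z$; since $E=\P(N_{Z/X})$ with $N_{Z/X}$ of rank $c$, the map $\pi_E$ is a projective bundle of rank $c-1$, and $\rho\circ j=i\circ\pi_E$. First I would record the structural map $G_\X\to R\rho_*G_{(\tilde X, D|_{\tilde X}+E)}$ induced by the morphism of modulus pairs $(\tilde X, D|_{\tilde X}+E)\to(X,D)$ given by $\rho$ (admissible because $\rho^*D=D|_{\tilde X}\le D|_{\tilde X}+E$); the Gysin map $g_{\mathcal Z/\X}$ itself will be produced as a byproduct of the construction below.

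The key preliminary step is the codimension-one case, which I would prove first and separately. For a smooth divisor $W\subset Y$ transversal to an ambient modulus $D_Y$, with $j_W\colon W\hookrightarrow Y$, I claim there is a localization triangle in $D(Y_{\Nis})$
\[ j_{W*}\gamma^1 G_{(W, D_Y|_W)}[-1]\to G_{(Y,D_Y)}\to G_{(Y,D_Y+W)}\to j_{W*}\gamma^1 G_{(W, D_Y|_W)}. \]
This is exactly the $c=1$ instance of the theorem: it expresses that enlarging the modulus by $W$ changes the sheaf, up to higher cohomology, by the twist $\gamma^1 G$ supported on $W$. I would establish it from $\boks$-invariance and semi-purity of $G$ together with the identity $R^1\pi_*F_{X\times\P^1}=(\gamma^1F)_X$, which already incarnates the residue term, reducing the local computation to the normal bundle via transversality. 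This is where the modulus bookkeeping is delicate and where the modulus-descent of \cite[Prop 2.5]{BRS} (the map $\psi$ above) enters.

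Granting the divisorial triangle, I would apply it to $W=E\subset\tilde X$ with ambient modulus $D|_{\tilde X}$ and push forward along $R\rho_*$. Using $R\rho_*j_*=i_*R\pi_{E*}$, the projective bundle formula for $\pi_E$, and $\gamma^i\circ\gamma^1=\gamma^{i+1}$, the residue term becomes
\[ i_*R\pi_{E*}\gamma^1 G_{(E,D|_E)}=i_*\bigoplus_{i=0}^{c-1}\gamma^{i+1}G_{\mathcal Z}[-i]=i_*\bigoplus_{i=1}^{c}\gamma^i G_{\mathcal Z}[-(i-1)], \]
while the blow-up formula gives $R\rho_*G_{(\tilde X,D|_{\tilde X})}=G_\X\oplus\bigoplus_{i=1}^{c-1}i_*\gamma^i G_{\mathcal Z}[-i]$. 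Hence, applying $R\rho_*$ to the divisorial triangle (whose left-hand term carries the extra shift $[-1]$) yields
\[ i_*\bigoplus_{i=1}^{c}\gamma^i G_{\mathcal Z}[-i]\to G_\X\oplus\bigoplus_{i=1}^{c-1}i_*\gamma^i G_{\mathcal Z}[-i]\to R\rho_*G_{(\tilde X,D|_{\tilde X}+E)}\xrightarrow{+1}. \]
The decisive point is that the first map is, on each summand $\gamma^i$ with $1\le i\le c-1$, an isomorphism onto the matching summand of the blow-up formula, so that these cancel, while the remaining summand $i_*\gamma^c G_{\mathcal Z}[-c]$ is sent to $G_\X$ by a class in $\Ext^c$ which I would define to be $g_{\mathcal Z/\X}$. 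Passing to cofibres then collapses the sequence to exactly
\[ i_*\gamma^c G_{\mathcal Z}[-c]\xrightarrow{g_{\mathcal Z/\X}}G_\X\to R\rho_*G_{(\tilde X,D|_{\tilde X}+E)}\to i_*\gamma^c G_{\mathcal Z}[-c+1], \]
which is the asserted triangle.

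The main obstacle is twofold, and both parts are modulus-specific. First, establishing the divisorial triangle with the correct modulus $D_Y+W$ and the correct twist $\gamma^1$ on $W$: unlike the $\A^1$-invariant case one cannot argue purely homotopically, and the behaviour of sections with one prescribed extra pole along $W$ must be controlled by semi-purity and the descent map $\psi$. Second, checking that the induced boundary map matches the intermediate twists $\gamma^1,\dots,\gamma^{c-1}$ of the blow-up formula isomorphically; this is a compatibility between the projective bundle formula for $E\to Z$ and the blow-up formula for $\tilde X\to X$, and verifying it is what forces the intermediate twists to cancel and isolates the single top twist $\gamma^c$ together with the Gysin class.
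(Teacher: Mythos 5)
Your proposal should be measured against the proof in \cite[Thm 7.16]{BRS}: the paper itself gives no argument for this theorem, only the remark that it is obtained ``similar to Voevodsky's'' \cite[Prop. 3.5.4]{TriCa}. At the structural level your plan follows exactly that template, and it is broadly the architecture of the actual proof: first the codimension-one (divisorial) case, then the general case by applying it to $E\subset\tilde X$, pushing forward along $R\rho_*$ (using $R\rho_*j_*=i_*R\pi_{E*}$ and \Cref{thm:pbf} for $\pi_E$), comparing with the blow-up formula, and cancelling the intermediate twists. The formal part of this reduction is sound: the identity $\gamma^i\gamma^1=\gamma^{i+1}$ follows from the cancellation theorem (\Cref{thm:MSCancellation}); by vanishing of negative Ext groups between (shifted) sheaves the comparison map from $i_*\bigoplus_{b=1}^{c}\gamma^bG_{\mathcal Z}[-b]$ to $G_\X\oplus\bigoplus_{i=1}^{c-1}i_*\gamma^iG_{\mathcal Z}[-i]$ is triangular, so once its diagonal entries are isomorphisms a Schur-complement argument collapses its cone to the cone of a residual map $i_*\gamma^cG_{\mathcal Z}[-c]\to G_\X$, which one may take as $g_{\mathcal Z/\X}$.

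However, there is a genuine gap, and it sits exactly where the theorem lives: your ``key preliminary step'', the divisorial localization triangle, is \emph{asserted} rather than proven, and the ingredients you cite do not produce it. Semi-purity gives the injectivity of $G_{(Y,D_Y)}\to G_{(Y,D_Y+W)}$, but the substance is the construction of a residue map $G_{(Y,D_Y+W)}\to j_{W*}\gamma^1G_{(W,D_Y|_W)}$ at the level of Nisnevich sheaves and the proof that it is surjective with kernel exactly $G_{(Y,D_Y)}$. The identity $R^1\pi_*F_{X\times\P^1}=(\gamma^1F)_X$ is a statement about the modulus-free $\P^1$-fibration (a consequence of cube-invariance, \cite[Thm 9.3]{Sai}); it does not by itself control the jump from modulus $D_Y$ to $D_Y+W$, i.e., sections acquiring one extra pole along a transversal divisor. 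In \cite{BRS} this step is where the bulk of the Gysin-specific work lies: one treats a standard model built from $\mathcal Z\otimes\boks^{(1)}$ using the structure theory of $\CI^{\tau,sp}_{\Nis}$ --- in particular the modulus descent \cite[Prop 2.5]{BRS} that the paper singles out --- and then transfers the computation to an arbitrary transversal smooth divisor by Nisnevich-local arguments; none of this is routine, and it cannot be bypassed by ``reducing to the normal bundle via transversality'', which presupposes a deformation or excision statement for modulus pairs that itself needs proof. The same criticism applies, less severely, to your second obstacle: the diagonal entries above are isomorphisms because both the blow-up and the projective bundle decompositions are induced by cupping with powers of the same class attached to $\mathcal{O}_{\tilde X}(E)|_E=\mathcal{O}_{\P(N_{Z/X})}(-1)$, a compatibility you name but do not verify. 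In short, the proposal correctly reconstructs the formal reduction, but defers precisely the two statements that constitute the proof.
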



\begin{example}
In \(\text{char}(k)=0\) and \(c=1\) we get an exact sequence
        \[\begin{matrix*}[l]
            0 \to  \tilde{\text{Conn}}^1(X, D) \to  \tilde{\text{Conn}}^1(X, D+Z)  \to  H^0(Z, \mathcal{O}_Z(i^* D - (i^*D)_{\rm red} ))/\Z \vspace{5mm}\\
     
            \overset{g_{\mathcal{Z}/\X}}{\to} H^1\left(X, \frac{\Omega^1_{X/k} (\log D) (D - D_{\rm red})}{d\log(j_* \mathcal{O}^\times_{X\setminus D })} \right)  \to  H^1\left(X, \frac{\Omega^1_{X/k} (\log D+Z) (D - D_{\rm red} )}{d\log(j_* \mathcal{O}^\times_{X\setminus (D+Z)})} \right) 
        \end{matrix*}\]
    and for \(c \geq 2\) we have
    \[\tilde{\text{Conn}}^1(X, D) \cong \tilde{\text{Conn}}^1(\tilde{X}, \rho^* D+E),\]
    where $\text{Conn}^1(X)$ denotes the  isomorphism classes of rank 1  connections on $X$.
\end{example}
\begin{example}
    Let \(\text{Lisse}^1 \in \RSC_{\Nis}\) be the sheaf whose sections over \(X\) are the lisse \(\overline{\Q}_l\)-sheaves of rank 1, and \(\tilde{\text{Lisse}}^1(X, D)\) the lisse \(\overline{\Q}_l\)-sheaves of rank 1 on \(X \setminus D \) with Artin conductor less than or equal to \( D\). Then for \(\text{char}(k)=p>0\), \(l \neq p\), and \(c \geq 2\), we have
    \[\tilde{\text{Lisse}}^1(X, D) = \tilde{\text{Lisse}}^1(\tilde{X}, \rho^*D+E).\]
\end{example}

We can now define a proper correspondence action on reciprocity sheaves.
\begin{definition}
Let \(S\) be a scheme of finite type over \(k\) and let \(C_S\) denote the category of \textit{proper (Chow) correspondences}, i.e., its objects are \(S\)-schemes \(X \overset{f}{\to} S\) such that \(X\) is quasi-projective, and smooth over \(k\), and \(f\) is a morphism of finite type. Morphisms in \(C_S\) between (connected) objects \(X\) and \(Y\) are elements in
\[C_S(X, Y) := \varinjlim_{Z \subset X \times_S Y}CH_{\dim X}(Z),\]
where the direct limit runs over all closed subschemes $Z\subset X \times_S Y$ 
which are proper over  $X$.
The composition of morphisms in this category is defined  using Fulton's refined intersections.
\end{definition}

\begin{definition}\label{defn:proper-cor-action}
For \(F \in \RSC_{\Nis}\), and objects \(X \overset{f}{\to} S\) and \(Y \overset{g}{\to} S\) in \(C_S\), together with an element 
\(\alpha \in C_S(X, Y)\), we define the \textit{proper correspondence action} 
\[\alpha^* \colon Rg_*F_Y \to Rf_*F_X\qquad \text{in } D(S_{\Nis})\]
 by
\begin{enumerate}
    \item\label{defn-cor1} pulling back to \(X \times Y\)
    \item\label{defn-cor2} cupping with \(\alpha\) (minding the support)
    \item\label{defn-cor3} pushing forward to \(X\) (using the properness of the support over \(X\)).
\end{enumerate}

For \ref{defn-cor2} we note that the Gersten resolution for Milnor $K$-theory 
yields an identification $CH_{\dim X}(V)=H^e_V(X \times Y, K^M_e)$ (Bloch formula with support),
where \(V \subset X \times_S Y\) is proper over \(X\) and $e=\dim Y$, and hence 
\(\alpha \in CH_{\dim X}(V)\) corresponds to a map \(\colon \Z[-e] \to R \underline{\Gamma}_V(K^M_e)\). The cupping  with \(\alpha\)  is then defined as  the composition
\begin{multline*}
\gamma^e F[-e] \overset{\alpha}{\longrightarrow} \gamma^e F \otimes_{\Z}^L R\underline{\Gamma}_V(K^M_e) \longrightarrow  R\underline{\Gamma}_V(\gamma^eF \otimes_{\MPST} K^M_e) \\
= R\underline{\Gamma}_V\left(\underline{\text{Hom}}_{\MPST}(K^M_e, F) \otimes_{\MPST} K^M_e\right) \overset{\text{adj.}}{\longrightarrow} R\underline{\Gamma}_V F.
\end{multline*}

The construction of the pushforward in \ref{defn-cor3}
follows the classical method, but we have to
keep track of the support, as the projection $X\times Y\to X$ does not need to be projective:
Take a closed embedding of  $Y$ into an open $U$ of a projective space $P$ over $\Spec k$.
Then the pushforward is defined by using the Gysin map with support in $V$ along 
the closed embedding $X\times Y\hookrightarrow X\times U$, 
by excision the cohomology of $X\times U$ with support in $V$ agrees with the  
cohomology of $X\times P$ with support in $V$ and  we can use the projective bundle formula 
to pushforward to $X$. The cancellation theorem  (\cref{thm:MSCancellation}) is used 
to cancel  twists. 
\end{definition} 

We obtain a functor  \(C_S \to D(S_{\Nis})\) given by \((X \overset{f}{\to} S) \mapsto Rf_* F\).

\subsection{Applications}
In this section we survey applications of the results presented in the previous sections, see \cite[Chap 10, 11]{BRS} for more details.

\subsubsection{Obstructions to the existence of zero cycles in degree 1.}
\begin{theorem}[{\cite[Cor 10.2]{BRS}}]
Let \(F \in \RSC_{\Nis}\). Let \(f \colon X \to S\) be a projective dominant map between smooth $k$-schemes.
Assume there exists a degree 1 zero cycle on $X_K$, where $K=k(S)$. Then
\[f^* \colon H^i(S, F_S) \longrightarrow H^i(X, F_X)\]
is split-injective.
\end{theorem}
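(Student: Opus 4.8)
The plan is to realize $f^*$ as the correspondence action of the graph of $f$ and to split it by the correspondence coming from the given degree $1$ zero cycle, exploiting the functoriality of the action $C_S \to D(S_{\Nis})$ constructed in the previous subsection. We may assume $S$ (and hence $X$) irreducible, of dimension $d:=\dim S$. Viewing $S$ as the object $\mathrm{id}_S\in C_S$ and $f\colon X\to S$ as an object of $C_S$, the morphism $f$ corresponds to the fundamental class $[\Gamma_f]\in C_S(X,S)$, namely the class of $X$ inside $X\times_S S=X$; by construction of the proper correspondence action its associated map $[\Gamma_f]^*\colon F_S\to Rf_*F_X$ induces $f^*$ on Nisnevich cohomology.

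First I would turn the zero cycle into a correspondence. Write $z=\sum_i n_i[p_i]$ for a degree $1$ zero cycle on $X_K=f^{-1}(\eta)$, where $\eta=\Spec K$ is the generic point of $S$, so that $\sum_i n_i[K(p_i):K]=1$. Taking closures $Z_i=\overline{\{p_i\}}\subset X$ produces integral closed subschemes with $Z_i\to S$ projective (as $f$ is projective), dominant, and generically finite of degree $[K(p_i):K]$; in particular each $Z_i$ has dimension $d$. Hence $\gamma:=\sum_i n_i[Z_i]$ defines an element of $C_S(S,X)$, its support $\bigcup_i Z_i$ being proper over $S$, and the action yields $\gamma^*\colon Rf_*F_X\to F_S$, i.e.\ a map $H^i(X,F_X)\to H^i(S,F_S)$ on cohomology.

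It then remains to verify that $\gamma$ splits $f^*$, i.e.\ that $[\Gamma_f]\circ\gamma=[\mathrm{id}_S]$ in $C_S(S,S)$. Since composing a correspondence with the graph of a morphism computes the proper pushforward, one has $[\Gamma_f]\circ\gamma=f_*\gamma=\sum_i n_i[K(p_i):K]\,[S]=\deg(z)\,[S]$. As $S$ is irreducible of dimension $d$, the group $CH_{d}(S)=\Z\cdot[S]\subset C_S(S,S)$, and $[S]$ is precisely the identity correspondence $[\mathrm{id}_S]$; because $\deg(z)=1$ we get $[\Gamma_f]\circ\gamma=[\mathrm{id}_S]$. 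By functoriality of the correspondence action we then obtain $\gamma^*\circ f^* = \gamma^*\circ[\Gamma_f]^* = ([\Gamma_f]\circ\gamma)^* = (\mathrm{id}_S)^* = \mathrm{id}$ on $H^i(S,F_S)$, so $f^*$ is split injective with splitting $\gamma^*$.

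The main obstacle I expect is the clean identification $[\Gamma_f]\circ\gamma=f_*\gamma$, together with the verification that $\gamma$ is a bona fide morphism in $C_S$. The former requires that Fulton's refined composition in $C_S$ degenerates to ordinary proper pushforward when one factor is a graph, and the latter relies on the projectivity of $f$ to guarantee properness of $\bigcup_i Z_i$ over $S$. Once these are in place, the dimension count forcing $CH_{d}(S)=\Z\cdot[S]$ is what makes the degree computation exact rather than merely generic, and the splitting follows formally.
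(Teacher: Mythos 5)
Your proof is correct and follows essentially the same route as the paper's: you lift the degree $1$ zero cycle to a correspondence in $C_S(S,X)$ (the paper's $\overline{\xi}$, which is exactly your closure construction $\gamma=\sum_i n_i[Z_i]$), show that its pushforward along $f$ equals $[S]=\mathrm{id}_S$ in $C_S(S,S)$, and conclude by functoriality of the correspondence action that $\gamma^*\circ f^*=(f_*\gamma)^*=\mathrm{id}$. The only difference is one of exposition: you spell out the details (properness of the $Z_i$ over $S$, the identification of composition with a graph as proper pushforward, and $CH_d(S)=\Z\cdot[S]$) that the paper's three-line proof leaves implicit.
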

\begin{proof}
Take \(\overline{\xi}\) as a lift of \(\xi\in CH_0(X_K)^{{\rm deg}= 1}\) under the map
\[C_S(S, X) = CH_{\dim S}(S \times_S X) \to CH_0(X_K).\]
Then $f_*\overline{\xi}=[S]=\text{id}_S\in CH_{\dim S}(S)=C_S(S,S)$. Hence the splitting
\[\overline{\xi}^* \circ f^* = (f_*\overline{\xi})^* = \textit{id} \colon H^i(S, F) \to H^i(X, F) \to H^i(S, F).\]
\end{proof}

\subsubsection{Generalized Brauer--Manin obstruction for zero cycles}

Let \(S\) be a smooth projective curve over $k$ with function field \(K = k(S)\).
Furthermore, let \(f \colon X \to S\) be a projective dominant map with $X$ smooth, 
and choose \(v \in S_{(0)}\). Then for \(\alpha_v \in CH_0(X_{K_v})\) with lift \(\overline{\alpha}_v \in CH_1(X_{S_v})\),
where \(K_v\) and \(S_v\) are the henselization of \(K\) and \(S\) at \(v\), the map
\[\Psi(\alpha_v) := \overline{\alpha}_v^* \colon f_* F_{X_{S_v}} \to F_{S_v}\]
 depends only on $\alpha_v$ and not on the lift $\overline{\alpha}_v$.
Indeed if $\alpha'_v$ is a different lift then $\beta=\overline{\alpha}_v-\alpha'_v$ can be represented 
by a cycle supported in the special fiber $X_v$ and 
it follows from the construction of the proper correspondence action 
in Definition \ref{defn:proper-cor-action}, that $\beta^*: f_* F_{X_{S_v}} \to F_{S_v}$
factors via $\underline{\Gamma}_v(F_{S_v})$ which vanishes by Saito's Purity Theorem, 
see \ref{thm:ReciprocityPurity}. Taking the first cohomology with support in $v$ yields a map
\[H^1_v(S, f_*F_X) \overset{\Psi(\alpha_v)}{\longrightarrow} H^1_v(S, F) \to H^1(S, F).\]
Thus we obtain a map
\[\Psi \colon \prod_{v \in S_{(0)}} CH_0(X_{K_v}) \to \text{Hom}\left( \bigoplus_{v \in S_{(0)}} H^1_v(S, f_* F_X), H^1(S, F) \right).\]
Moreover, there is a map
\[\iota \colon F(X_K) \to \bigoplus_{v \in S_{(0)}} H^0(S_v \setminus \{v\}, f_*F_X) \overset{\partial}{\longrightarrow} \bigoplus_{v \in S_{(0)}} H^1_v(S, f_*F_X).\]

\begin{theorem}[{\cite[Cor 10.4]{BRS}}]
If \(\Psi((\alpha_v)_v) \circ \iota \neq 0\), then there does not exist \(\alpha \in CH_0(X_K)\) such that \(\alpha \mapsto (\alpha_v)_v.\)
\end{theorem}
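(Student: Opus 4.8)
The plan is to establish the contrapositive: assuming a global zero-cycle $\alpha\in CH_0(X_K)$ restricting to the family $(\alpha_v)_v$ exists, I show that $\Psi((\alpha_v)_v)\circ\iota=0$. The guiding principle is that a \emph{global} correspondence produces a map of sheaves on all of $S$, so the various local contributions are the boundary terms of a single globally defined section and therefore sum to zero in $H^1(S,F)$ by the reciprocity built into the Cousin resolution.

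First I would lift $\alpha$ to a global proper correspondence. As $S$ is a smooth projective curve we have $\dim S=1$ and $S\times_S X=X$, and since $f$ is projective every cycle on $X$ is proper over $S$; thus $C_S(S,X)=CH_1(X)$. The generic-fibre restriction $CH_1(X)\to CH_0(X_K)$ is surjective by the localization sequence for Chow groups (exactly as in the degree-one obstruction theorem above), so I may choose $\overline{\alpha}\in C_S(S,X)$ lifting $\alpha$. Its restriction to each henselization $S_v$ is a lift of $\alpha_v$, and since $\Psi(\alpha_v)$ is independent of the chosen lift, all the local maps $\Psi(\alpha_v)$ are induced by this single class $\overline{\alpha}$.

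Next I would invoke the proper correspondence action of \Cref{defn:proper-cor-action}, giving $\overline{\alpha}^*\colon Rf_*F_X\to F_S$ in $D(S_{\Nis})$; since $F_S$ sits in degree $0$ this is the same datum as a genuine sheaf map $f_*F_X\to F_S$, restricting to $\Psi(\alpha_v)$ over each $S_v$. The key is that $\overline{\alpha}^*$ is compatible with base change and with the connecting homomorphisms of the localization sequences. This yields a commutative square whose top edge is $\iota$ and whose bottom edge is the boundary $\partial\colon F(K)\to\bigoplus_v H^1_v(S,F)$ of the Cousin resolution \eqref{eq:Cousin}: a generic section $b\in F(X_K)=(f_*F_X)_\eta$ is carried, via $\iota$ followed by $\sum_v\Psi(\alpha_v)$, to the same element of $\bigoplus_v H^1_v(S,F)$ as via the generic-point action $\overline{\alpha}^*_\eta\colon F(X_K)\to F(K)$ followed by $\partial$.

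Finally I would conclude using purity. By \Cref{thm:ReciprocityPurity} the sheaf $F$ admits the Cousin resolution \eqref{eq:Cousin} on the curve $S$, which for a one-dimensional base identifies $H^1(S,F)$ with $\Coker\!\big(\partial\colon F(K)\to\bigoplus_v H^1_v(S,F)\big)$; the map $\bigoplus_v H^1_v(S,F)\to H^1(S,F)$ appearing in the statement is precisely this cokernel projection. Hence it annihilates the image of $\partial$, and combining with the previous square,
\[
\Psi((\alpha_v)_v)\circ\iota=\Big[\textstyle\bigoplus_v H^1_v(S,F)\to H^1(S,F)\Big]\circ\partial\circ\overline{\alpha}^*_\eta=0,
\]
as desired. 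The main obstacle is the compatibility asserted in the third paragraph: one must verify that the global correspondence action genuinely restricts to the local $\Psi(\alpha_v)$ and commutes with the localization boundary maps, all while correctly tracking supports. This rests on the functoriality under base change of the pushforward-with-supports and of the cup product constituting \Cref{defn:proper-cor-action}, together with the base-change behaviour of the Gysin and projective-bundle constructions used there.
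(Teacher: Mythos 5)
Your proposal is correct and is essentially the paper's argument: both pass to the contrapositive, lift $\alpha$ to a global class $\overline{\alpha}\in CH_1(X)=C_S(S,X)$, identify each local map $\Psi(\alpha_v)$ with the restriction of the single global action $\overline{\alpha}^*$ (using lift-independence of $\Psi(\alpha_v)$ and base change for the correspondence action), and conclude by a localization/Cousin vanishing. The only difference is cosmetic: you apply $\overline{\alpha}^*$ at the generic point first and use exactness of $F(K)\to\bigoplus_v H^1_v(S,F)\to H^1(S,F)$, whereas the paper uses the corresponding vanishing one level up, namely that the composite $F(X_K)\to\bigoplus_v H^1_v(S,f_*F_X)\to H^1(X,F)$ is zero, and then applies $\overline{\alpha}^*$ on $H^1(X,F)$.
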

\begin{proof}
Take \(\alpha \mapsto \alpha_v\) and \(\overline{\alpha} \in CH_1(X)\) a lifting of \(\alpha\). We then get a diagram
\begin{center}
    \begin{tikzcd}[row sep=large]
    F(X_K) \arrow[r, "\iota"] \arrow[rr, "0", bend left=20] & \bigoplus_v H_v^1(S, f_*F_X) \arrow[r] \arrow[d, "\sum_v \Psi(\alpha_v)"] & H^1(X, F) \arrow[dl, "\overline{\alpha}^*"] \\
    & H^1(S, F),
    \end{tikzcd}
\end{center}
where the second map in the horizontal sequence is
the composition of summing the forget-support-maps $H^1_v(S, f_*F_X)\to H^1(S, f_*F_X)$
with the natural map $H^1(S, f_*F_X)\to H^1(X, F)$ induced by 
$f_*F_X\to Rf_*F_X$. 
\end{proof}


\begin{remark}
Assume \(k = \F_q\) is a finite field with $q$ elements and \(F = \text{Br}\).
Using the Cousin resolution \eqref{eq:Cousin} of ${\rm Br}_S$  and
the fact that in the case at hand we have 
\[H^1_v(S,{\rm Br}_S)={\rm Br}(K_v)/{\rm Br}(S_v)={\rm Br}(K_v),\]
the Brauer-Hasse-Noether Theorem in the function field case yields
\[H^1_{\Nis}(S, {\rm Br})= {\rm Coker}({\rm Br}(K)\to \bigoplus_{v\in S_{(0)}} {\rm Br}(K_v))= \Q/\Z,\]
see \cite[XIII, \S3, Thm 2 and \S6, Thm 4]{Weil-BNT}. 
Thus \(\Psi\)  equals the map
\[\prod_{v \in S_{(0)}} CH_0(X_{K_v}) \longrightarrow \text{Hom}\left( \bigoplus_{v \in S_{(0)}} \frac{\text{Br}(X_{K_v})}{\text{Br}(X_{S_v})}, \Q/\Z\right),\]
which is the classical Brauer--Manin obstruction for zero-cycles in the function field case. 
\end{remark}

\subsubsection{Stably birational invariance}
\begin{definition}
Let \(X \overset{f}{\to} S\), \(Y \overset{g}{\to} S\) be objects of \(C_S\) and assume that $X$ and $Y$
are integral. We say that \(f\) and \(g\) are {\em properly birational over} \(S\) 
if there exists proper birational \(S\)-maps \(Z \to X\) and \(Z \to Y\) ($Z$ could be singular).

The maps \(f\) and \(g\) are said to be {\em stably properly birational over} \(S\) if there exist vector bundles \(V\) over \(X\) and \(W\) over \(Y\) such that \(\P(V)\) and \(\P(W)\) are properly birational over \(S\).
\end{definition}

\begin{example}
If \(S\) is singular, and \(X\) and \(Y\) are two different resolutions of \(S\), then they are properly birational over \(S\).

If $f$ is proper and we take \(S = Y = \Spec k\), then  \(f\) and $\text{id}_{\Spec k}$ are stably properly
birational if and only if  $X$ is stably rational over $k$.
\end{example}

\begin{theorem}[{\cite[Thm 10.7]{BRS}}]
Any \(F \in \RSC_{\Nis}\) is a stably properly birational invariant over \(S\), that is, for every stably properly birational \(S\)-schemes \(X \overset{f}{\to} S, Y \overset{g}{\to} S \in C_S\), we have an isomorphism
\[f_* F_X \overset{\simeq}{\longrightarrow} g_* F_Y.\]
\end{theorem}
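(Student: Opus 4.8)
The plan is to first reduce the stably properly birational case to the properly birational case by means of the projective bundle formula, and then to treat the latter with an explicit pair of mutually inverse proper Chow correspondences, discarding the correction terms coming from the exceptional loci via Saito's purity theorem. For the reduction, suppose $\P(V)$ and $\P(W)$ are properly birational over $S$, and write $\pi \colon \P(V) \to X$, $\tilde f := f \circ \pi \colon \P(V) \to S$, and symmetrically on the $Y$-side. By \Cref{thm:pbf} we have $R\pi_* F_{\P(V)} \cong \bigoplus_{i=0}^{n}(\gamma^i F)_X[-i]$, so that $R\tilde f_* F_{\P(V)} \cong Rf_*\big(\bigoplus_i (\gamma^i F)_X[-i]\big)$. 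Passing to $H^0$ kills every summand with $i \geq 1$, since $Rf_*$ of a sheaf is concentrated in non-negative degrees, and $\gamma^0 F = \underline{\Hom}_{\PST}(K^M_0, F) = F$; hence $\tilde f_* F_{\P(V)} = f_* F_X$, and likewise $\tilde g_* F_{\P(W)} = g_* F_Y$. It therefore suffices to prove the theorem for the properly birational pair $\P(V), \P(W)$, so we may assume from the start that $X$ and $Y$ are (integral and) properly birational over $S$.

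Assuming now proper birational $S$-maps $a \colon Z \to X$ and $b \colon Z \to Y$, I would form the image $W \subseteq X \times_S Y$ of $Z$ under $(a,b)$. Since $a$ and $b$ are proper and $Z \to W$ is surjective, both projections $W \to X$ and $W \to Y$ are proper and birational, so the fundamental cycle $[W]$ defines a morphism $\gamma \in C_S(X, Y)$ which is proper over $X$, and its transpose $\gamma^t \in C_S(Y, X)$ is proper over $Y$. Applying the functor $C_S \to D(S_{\Nis})$ of \Cref{defn:proper-cor-action} and taking $H^0$-pushforwards yields maps $(\gamma^t)^* \colon f_* F_X \to g_* F_Y$ and $\gamma^* \colon g_* F_Y \to f_* F_X$, and it remains to show that these are mutually inverse.

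The heart of the matter is that $\gamma^t \circ \gamma \in C_S(X, X)$ agrees with the diagonal $[\Delta_X]$ over the open locus on which the birational map $X \dashrightarrow Y$ is an isomorphism; hence $\gamma^t \circ \gamma = [\Delta_X] + C$, where $C$ is supported on a closed $\Sigma \subset X \times_S X$ whose image $A$ under the first projection is contained in the exceptional locus, a closed subset of codimension $\geq 1$ in $X$. By contravariant functoriality, $\gamma^* \circ (\gamma^t)^* = (\gamma^t \circ \gamma)^* = \mathrm{id} + C^*$. Exactly as in the Brauer--Manin computation following \Cref{defn:proper-cor-action}, the construction of the correspondence action shows that $C^*$ factors through cohomology with supports $R\underline{\Gamma}_{A} F_X$ with $A$ of codimension $\geq 1$; by \Cref{thm:ReciprocityPurity} we have $H^0_x(X, F) = 0$ for every point $x$ of codimension $\geq 1$, so $R\underline{\Gamma}_A F_X$ is concentrated in positive degrees and $C^*$ vanishes after $Rf_*$ on $H^0$. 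Thus $\gamma^* \circ (\gamma^t)^* = \mathrm{id}$ on $f_* F_X$, and symmetrically $(\gamma^t)^* \circ \gamma^* = \mathrm{id}$ on $g_* F_Y$, producing the isomorphism $f_* F_X \xrightarrow{\simeq} g_* F_Y$.

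The main obstacle I expect is the precise support analysis of the correction cycle $C$: establishing that the relevant projection really does land in a locus of codimension $\geq 1$, and then verifying at the level of the explicit construction in \Cref{defn:proper-cor-action} that the correspondence action of such a cycle genuinely factors through cohomology with supports in that locus, so that purity applies. This is exactly the point where the absence of resolution of singularities and weak factorization in positive characteristic forces the correspondence-theoretic argument in place of a reduction to blow-ups.
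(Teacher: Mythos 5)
Your proposal is correct and follows exactly the paper's (one-line) proof, which cites precisely your three ingredients: the projective bundle formula to reduce the stable case to the properly birational case, the proper correspondence action to produce the maps via $Z\circ Z^t=\Delta+\epsilon$, and Saito's purity theorem to kill the correction term supported in codimension $\geq 1$ on the level of $H^0$. Your write-up is simply a fleshed-out version of that argument, matching in particular the support-and-purity mechanism the paper itself uses in its Brauer--Manin discussion and in the proof of the companion theorem on $Rf_*$.
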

\begin{proof}
This follows from the projective bundle formula, purity, and the correspondence action.
\end{proof}

\begin{theorem}[{\cite[Thm 10.10]{BRS}}]
Let \(X \overset{f}{\to} S, Y \overset{g}{\to} S \in C_S\) be properly birational over \(S\) and let \(F \in \RSC_{\Nis}\). Assume that $F(K)=0$ for all function fields $K/k$ of transcendence degree $\le d-1$,
where $d=\dim X=\dim Y$. Then
\[R g_*F_Y \overset{\simeq}{\longrightarrow} Rf_*F_X.\]
\end{theorem}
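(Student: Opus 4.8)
The plan is to realise the comparison map as a proper correspondence and to show that, under the vanishing hypothesis, the ``exceptional'' correction terms act trivially on cohomology.

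First I would extract two mutually inverse correspondences from the proper birational data. Let $Z \to X$ and $Z \to Y$ be the given proper birational $S$-maps and let $U \subseteq X$, $U' \subseteq Y$ be the common open over which $X \dashrightarrow Y$ is an isomorphism. Since $Z \to X$ is proper and $X \times_S Y \to X$ is separated, the map $Z \to X \times_S Y$ is proper; its image $W$ is closed, proper and birational over $X$, so $[W] \in CH_{\dim X}(W)$ defines $\alpha \in C_S(X, Y)$. Symmetrically one gets $\beta \in C_S(Y, X)$. Applying the functor $C_S \to D(S_{\Nis})$, $(X \xrightarrow{f} S) \mapsto Rf_* F$, from \Cref{defn:proper-cor-action}, produces $\alpha^* \colon Rg_* F_Y \to Rf_* F_X$ and $\beta^* \colon Rf_* F_X \to Rg_* F_Y$. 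By functoriality and additivity of the correspondence action it then suffices to prove that the error correspondences $\gamma_X := \beta\circ\alpha - \Delta_X \in C_S(X, X)$ and $\gamma_Y := \alpha\circ\beta - \Delta_Y \in C_S(Y, Y)$ act as zero, for then $\alpha^*\circ\beta^* = (\beta\circ\alpha)^* = \Delta_X^* = \mathrm{id}$ and $\beta^*\circ\alpha^* = \Delta_Y^* = \mathrm{id}$, so $\alpha^*$ is the desired isomorphism.

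The arithmetic heart is a vanishing lemma: if $T$ is smooth of dimension $\le d-1$ then $F_T \simeq 0$ in $D(T_{\Nis})$. Indeed, by \Cref{thm:ReciprocityPurity} the terms of the Cousin resolution \eqref{eq:Cousin} of $F_T$ are $H^c_x(F) \cong F_{-c}(x)$ for $x \in T^{(c)}$, and $F_{-c}(x)$ is a quotient of $F\big((\A^1\setminus\{0\})^c \times x\big)$. The function field of $(\A^1\setminus\{0\})^c \times x$ has transcendence degree $(\dim T - c) + c = \dim T \le d-1$ over $k$, so by the injectivity of the first map in \eqref{eq:Cousin} this group embeds into $F$ of that function field, which vanishes by hypothesis. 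Hence every Cousin term vanishes and $F_T \simeq 0$.

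Finally I would argue that $\gamma_X$ acts as zero. Restricting cycles to $U \times_S X$ shows that $\beta\circ\alpha$ and $\Delta_X$ agree there, so $\gamma_X$ is supported on $(X \setminus U) \times_S X$; thus its first projection lands in the exceptional locus $T_X := X \setminus U$, which has dimension $\le d-1$. Choosing a proper map $\tilde T \to T_X$ with $\tilde T$ smooth, quasi-projective and of dimension $\le d-1$, one factors $\gamma_X$ in $C_S$ as $\gamma_2 \circ \gamma_1$ with $\gamma_1 \in C_S(X, \tilde T)$ and $\gamma_2 \in C_S(\tilde T, X)$; by functoriality $\gamma_X^* = \gamma_1^* \circ \gamma_2^*$ factors through $Rh_* F_{\tilde T}$, where $h \colon \tilde T \to S$. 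Since $\dim \tilde T \le d-1$, the vanishing lemma gives $F_{\tilde T} \simeq 0$ and hence $\gamma_X^* = 0$; the same argument gives $\gamma_Y^* = 0$. The main obstacle is precisely this last factorisation: although $\gamma_X$ is supported over the lower-dimensional locus $T_X$, one cannot simply pass to local cohomology with support on $T_X$, since by purity the associated groups $F_{-c}(x)$ still involve function fields of transcendence degree $d$ and do not vanish. One must genuinely route the correspondence through a smooth scheme of dimension $\le d-1$, which requires a resolution (or alteration) of the possibly singular $T_X$ together with careful bookkeeping of the Milnor $K$-theory twists entering the correspondence action of \Cref{defn:proper-cor-action}.
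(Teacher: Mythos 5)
Your skeleton coincides with the paper's: you encode the proper birational data as mutually inverse proper correspondences $\alpha=[W]\in C_S(X,Y)$, $\beta=[W^t]\in C_S(Y,X)$, and reduce, via the functor $C_S\to D(S_{\Nis})$, to showing that the error terms $\gamma_X=\beta\circ\alpha-\Delta_X$ and $\gamma_Y=\alpha\circ\beta-\Delta_Y$ act as zero. Your vanishing lemma -- that $F_T\simeq 0$ in $D(T_{\Nis})$ for $T$ smooth of dimension $\le d-1$, proved from the Cousin resolution \eqref{eq:Cousin}, purity and generic injectivity together with the hypothesis on $F$ -- is correct. But the gap is exactly where you flag it, and it is not mere bookkeeping: the paper disposes of the error term by quoting \cite[Prop 9.13]{BRS}, which asserts that a correspondence $\epsilon$ with $p_{Y*}\epsilon\in CH^{\ge 1}(Y)$ acts as zero under the stated hypothesis, over an arbitrary perfect field, while your substitute -- factoring $\gamma_X$ in $C_S$ through a smooth proper model $\tilde{T}$ of the exceptional locus $T_X$ -- meets two concrete obstructions.

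First, there is a dimension obstruction internal to $C_S$: morphisms in $C_S(\tilde{T},X)$ are by definition cycles of dimension $\dim\tilde{T}\le d-1$, so for any composite $\gamma_2\circ\gamma_1$ through $\tilde{T}$ the projection of its support to the second (target) copy of $X$ lies in $p_2(|\gamma_2|)$, hence has dimension $\le d-1$. A factorization through a $(\le d-1)$-dimensional object can therefore exist only if the \emph{target} projection of $\gamma_X$ is also small, whereas you only proved that the \emph{source} projection lies in $T_X$. (Both projections are in fact small, but this needs a separate argument -- e.g.\ Zariski's main theorem applied to the proper birational map $W\to Y$ with $Y$ normal -- and it forces you to resolve the target-side image rather than $T_X$; note also that the natural candidates for $\gamma_1$, such as transposes of graphs of $\tilde{T}\to X$, have dimension $\dim\tilde{T}\neq d$ and so are not even elements of $C_S(X,\tilde{T})$.) Second, even after these repairs your route needs genuine resolution of singularities in dimension $\le d-1$: an alteration of degree $\delta>1$ only exhibits a multiple $\delta\cdot\gamma_X$ (up to further error cycles) as a composite, which is useless for the integral statement. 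So your argument could at best prove the theorem in characteristic $0$ or for small $d$, whereas the theorem and \cite[Prop 9.13]{BRS} hold over every perfect field; indeed, the remark following the theorem in the paper stresses that it is a \emph{different} variant (with hypothesis $\gamma^1F=0$) that currently requires resolution of singularities, so any proof of the present statement must avoid it. Your parenthetical observation that one cannot simply take local cohomology with support in $T_X$ (purity produces groups $F_{-c}(x)$ over fields of transcendence degree $d$, which need not vanish) is correct, and it is precisely why this step requires the genuinely different argument of \emph{loc.\ cit.}
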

\begin{proof}
Take a closed subscheme \(Z \subset X \times_S Y\) mapping properly and birationally to \(X\) and \(Y\). Then \(Z \circ Z^t = \Delta_Y + \epsilon\) with \(p_{Y*}\epsilon \in CH^{\geq 1}(Y),\)
and the condition on \(F\) implies that \(\epsilon^* =0\) on \(Rg_*F_Y\), see \cite[Prop 9.13]{BRS}.
This implies that \(Z \circ Z^t\) acts as the identity on $Rg_*F_Y$; similarly with $Z^t\circ Z$.
\end{proof}

\begin{remark}
Taking \(Y=S\), and \(g\) the identity, yields the vanishing result \[Rf_*F_X \cong 0.\]
\end{remark}

\begin{example}
Assume that \(\dim X = \dim Y = d\). Then the theorem applies to the following list of sheaves:
\begin{itemize}
    \item \(\Omega^d_{/k}\), \,\(\Omega^d_{/k}/d\log K^M_d\);
    \item if \(\text{char } k =p\neq 0\)
    \begin{itemize}
        \item \(W_n\Omega^d/B_{\infty}\),  \(R^i\epsilon_*(\Z/p^n(d))\),
        \item \(G\langle d \rangle\), for \(G\) a smooth unipotent group,    
        \item \(H^1(G)\langle d\rangle\), for \(G\) a finite \(p\)-group over \(k\),
        \item (If furthermore \(k\) is algebraically closed) \(R^d\epsilon_* \Q/\Z(d)\).
    \end{itemize}
\end{itemize}
\end{example}

\begin{remark}
The case for \(\Omega^d_{/k}\) was known before by \cite[Thm 1]{CR11}.
It was later generalized to regular schemes  in \cite{CR15} and \cite{Kovacs}.

In the first case in positive characteristic we use Geisser-Levine \cite[Thm 8.3]{GL}, in the second and third case we use the 
Bloch--Kato--Gabber  Theorem \cite[Thm 2.1]{BK} and in the last case we use additionally the Milnor-Bloch-Kato conjecture,
proven by Rost-Voevodsky  \cite[Thm 6.16]{V11}, to check that the condition $F(K)=0$ for ${\rm trdeg}(K/k)<d$ 
is satisfied in the cases at hand.

There is a version of the theorem where the vanishing $F(K)=0$ for ${\rm trdeg}(K/k)<d$ is replaced by 
the vanishing $\gamma^1F=0$ (which is for example satisfied if $F$ is any smooth commutative unipotent group),
but this requires at the moment resolution of singularities in dimension \(d-1\).
\end{remark}

\begin{corollary}[{\cite[Cor 11.24]{BRS}}]
Let \(X \overset{f}{\to} S\) and \(Y \overset{g}{\to} S\) be  flat, geometrically integral, and projective morphisms between smooth connected \(k\)-schemes. Assume that the generic fiber has index 1, (implying that the Picard schemes \(\text{Pic}_{X/S}\) and \(\text{Pic}_{Y/S}\) are representable).
If \(X\) and \(Y\) are stably properly birational over \(S\), then
\[\text{Pic}_{X/S}[n] \cong \text{Pic}_{Y/S}[n]\]
on \(S_{\Nis}\) for all \(n\).
\end{corollary}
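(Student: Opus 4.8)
The plan is to realise the $n$-torsion $\text{Pic}_{X/S}[n]$ as a cohomology object of the derived pushforward of a reciprocity complex, and then to invoke the stably properly birational invariance of such pushforwards. The key input is the derived form of \cite[Thm 10.7]{BRS}: the proper correspondence action of \cref{defn:proper-cor-action} associated to a proper birational correspondence between $\P(V)$ and $\P(W)$ (together with its transpose) produces mutually inverse maps already in $D(S_{\Nis})$, so that $Rf_*\mathcal{F}_X \cong Rg_*\mathcal{F}_Y$ for every $\mathcal{F} \in \RSC_{\Nis}$, of which \cite[Thm 10.7]{BRS} records only the degree-zero part. The index-$1$ hypothesis on the generic fibre is what guarantees both the representability of $\text{Pic}_{X/S}$ and the existence of the degree-one zero-cycle needed to realise the correspondence, exactly as in the obstruction arguments above.

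First I would reduce to a pushforward computation via the Kummer sequence. Since $f$ and $g$ are proper with geometrically integral fibres we have $f_*\G_{m,X}=\G_{m,S}$, and multiplication by $n$ is an fppf-epimorphism of $\G_m$; hence the fppf Kummer sequence $0 \to \mu_n \to \G_m \xrightarrow{n} \G_m \to 0$ yields a natural isomorphism $R^1 f_* \mu_n \cong \text{Pic}_{X/S}[n]$ of sheaves on $S$, and similarly for $g$. It therefore suffices to show that $(X \xrightarrow{f} S) \mapsto R^1 f_* \mu_n$ is a stably properly birational invariant on $S_{\Nis}$.

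The next step is to pass from $\mu_n$ to the reciprocity world through the change of topology. Writing $n = n'p^r$ with $(n',p)=1$, the prime-to-$p$ factor $\mu_{n'}=\Z/n'(1)$ is a homotopy invariant sheaf with transfers, while for the $p$-primary factor the Geisser--Levine identification $\Z/p^r(1)_{\et} \cong W_r\Omega^1_{\log}[-1]$ together with the Artin--Schreier--Witt presentation \eqref{eq:ASW} shows that all the cohomology sheaves $R^b\epsilon_*\Z/n(1)$ lie in $\RSC_{\Nis}$. Commuting $\epsilon_*$ past the pushforward via $R\epsilon^S_* \circ Rf_{\et,*} = Rf_{\Nis,*}\circ R\epsilon^X_*$ exhibits $R^1 f_*\mu_n$, viewed on $S_{\Nis}$, as a piece of $Rf_{\Nis,*}$ applied to the bounded complex $R\epsilon^X_*\Z/n(1) \in D^b(\RSC_{\Nis})$; concretely one reads it off from the spectral sequence $E_2^{a,b}=R^a f_{\Nis,*}\bigl(R^b\epsilon_*\Z/n(1)\bigr)$. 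Applying the derived stably properly birational invariance termwise to each $R^b\epsilon_*\Z/n(1)$ makes the whole pushforward complex an invariant, and comparing the two spectral sequences transports the correspondence isomorphism through the Kummer identification to give $\text{Pic}_{X/S}[n] \cong \text{Pic}_{Y/S}[n]$ on $S_{\Nis}$.

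The hard part will be the topology bookkeeping in the third step, above all the $p$-primary contribution: here the Kummer identification lives in the flat topology, so one must match $R^1 f_{\mathrm{fl},*}\mu_{p^r}$ with the étale de Rham--Witt picture (via the flat–étale comparison and $H^1(\mu_{p^r})\in\RSC_{\Nis}$ of \cite[Theorem 9.12]{RS1}) and check that no genuinely non-reciprocity phenomenon is lost when $p \mid n$. One must also verify that the correspondence action of \cref{defn:proper-cor-action} commutes with the connecting maps defining $\text{Pic}[n]$ and is compatible with the spectral-sequence extraction. Saito's purity theorem (\cref{thm:ReciprocityPurity}) should enter exactly as in the construction of $\Psi$ above, forcing cycles supported on bad loci to act trivially, so that the resulting isomorphism is canonical and independent of the chosen birational model.
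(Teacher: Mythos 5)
Your argument fails at its declared key input: there is no ``derived form'' of \cite[Thm 10.7]{BRS}. The correspondence action does produce maps in $D(S_{\Nis})$, but the error term $\epsilon^*$ arising from $Z\circ Z^t=\Delta+\epsilon$ is only shown to vanish on the underived pushforward $g_*F_Y$ (via purity); killing it on all of $Rg_*F_Y$ is exactly what forces the hypothesis $F(K)=0$ for ${\rm trdeg}(K/k)\le d-1$ in \cite[Thm 10.10]{BRS}, and the remark following that theorem in these notes stresses that even the weaker hypothesis $\gamma^1F=0$ currently requires resolution of singularities. If your claim were true, Theorem 10.10 and its hypothesis would be superfluous. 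The failure is concrete and hits precisely the sheaves you need: by the Kummer sequence, Hilbert 90 and triviality of the Picard group of henselian local rings, one has $R^1\epsilon_*\mu_n\cong \G_m/n$ in $\HI_{\Nis}$, and since contraction is exact on $\HI_{\Nis}$ with $(\G_m)_{-1}=\Z$ and $(\mu_n)_{-1}=0$, one computes $\gamma^1(\G_m/n)\cong\Z/n\neq 0$. Hence for the blow-up $\rho\colon \tilde{X}\to X$ of a point on a smooth projective surface over $S=\Spec k$ (which is stably properly birational to $X$ over $S$), the blow-up formula gives $R\rho_*(\G_m/n)_{\tilde{X}}\cong (\G_m/n)_X\oplus i_*\Z/n[-1]$, so $Rf_*(R^1\epsilon_*\mu_n)$ is \emph{not} a stably properly birational invariant. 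Equivalently: the complex you propose to transport computes, in degree two, groups containing $\text{Pic}/n$, and $\text{Pic}(\tilde{X})/n\cong \text{Pic}(X)/n\oplus \Z/n$ (class of the exceptional divisor); only the degree-one torsion piece $\text{Pic}[n]$ is invariant, and no ``termwise'' application of invariance can see that distinction. Theorem 10.10 is also unavailable, since the sheaves $R^b\epsilon_*\Z/n(1)$ are visibly nonzero on function fields of small transcendence degree. With this input gone, the spectral-sequence comparison in your third step has nothing to transport.

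For the record, these notes contain no proof of the corollary (it is quoted from \cite[Cor 11.24]{BRS}), but the statement being an isomorphism of sheaves --- not of complexes --- is itself the signal that the argument must stay at the level of the underived $f_*$, which is what \cite[Thm 10.7]{BRS} controls. A workable skeleton: (i) at a henselian local $V\in S_{\Nis}$, the index-one hypothesis gives a degree-one zero-cycle on the generic fibre of $X_V\to V$, and a corestriction argument makes ${\rm Br}(V)\to{\rm Br}(X_V)$ injective, whence $\text{Pic}_{X/S}[n](V)=\text{Pic}(X_V)[n]$ (this, together with representability, is the real role of index one --- not ``realising the correspondence'', which comes from the stable proper birational equivalence alone); (ii) via Kummer, present $\text{Pic}(X_V)[n]$ as a quotient of terms of the form $(f_*F_X)(V)$ with $F$ among the reciprocity sheaves $R^b\epsilon_*\mu_n$ (essentially $\G_m/n$ and ${\rm Br}[n]$) by correction terms such as $\mathcal{O}(V)^\times/n$ that depend only on $S$ and are canonically identified for $X$ and $Y$; (iii) apply \cite[Thm 10.7]{BRS} to those $f_*$-terms and chase the finitely many resulting diagrams. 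Note also that even granting an isomorphism of total complexes, your extraction of $E_2^{0,1}$ would require compatibility with the Leray filtration of $Rf_{{\rm fppf}*}$, an additional structure your construction does not provide; in the underived approach this is replaced by naturality of maps of sheaves.
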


\begin{remark}
The above result is classical for \(S=\Spec k\), with \(k\) algebraically closed.
\end{remark}

\subsubsection{Decomposition of the diagonal}
\begin{definition}
Let \(K\) be a function field over \(k\) and \(X\) a smooth scheme over \(K\) with \(\dim X = d\). We say that the \textit{diagonal of} \(X\) \textit{decomposes} if 
\begin{equation}\label{eq:DecompDiag}
    [\Delta_X] = p_2^* \xi + (i \times \text{id})_* \beta \in CH^d(X \times_K X),
\end{equation}
where \(\xi \in CH_0(X)\) and \(\beta \in CH_d(Z \times_K X)\) for some closed subscheme 
$i:Z\hookrightarrow X$ with \(\text{codim}(Z, X) \geq 1\). 
\end{definition}

This condition was first considered by Bloch--Srinivas for rational coefficients in \cite{Bloch-Srinivas}.
By \cite[Lem 1.5]{CP} an integral smooth projective $k$-scheme \(X\), which is {\em retract rational} (i.e., there exist  open dense subsets \(U \subset X \) and \(V \subset \P^n_K\) together 
with a map \(V \to U\) which has a section), has the property that its diagonal  decomposes.
Hence implications of \eqref{eq:DecompDiag} on cohomology yield obstructions to \(X\) being retract rational over \(K\).

\begin{theorem}[{\cite[Thm 10.13]{BRS}}]\label{thm:DecompDiag}
Let S be the henselization of a smooth k-scheme in a 1-co\-dimensional
point or a regular connected affine scheme of dimension $\le 1$ and of finite type over a
function field $K$ over $k$. 
Let \(f \colon X \to S\) be a smooth and projective morphism, 
and assume that the diagonal of the generic fiber of \(f\) decomposes. Then 
\[F(S) = F(X),\] 
for any \(F \in \RSC_{\Nis}\).
\end{theorem}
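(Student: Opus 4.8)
The plan is to prove that the pullback $f^{*}\colon F(S)\to F(X)$ is an isomorphism by exhibiting an explicit two-sided inverse built from the proper correspondence action of \Cref{defn:proper-cor-action}, using Saito's Purity Theorem (\Cref{thm:ReciprocityPurity}) to annihilate every contribution coming from lower-dimensional or special-fibre cycles.

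Write $\eta$ for the generic point of $S$, $E=k(S)$ for its residue field, and $X_\eta$ for the generic fibre, a smooth projective $E$-scheme of dimension $d$. By \eqref{eq:DecompDiag} the decomposition hypothesis reads
\[ [\Delta_{X_\eta}] = p_2^{*}\xi + (i\times\mathrm{id})_{*}\beta \in CH^{d}(X_\eta\times_E X_\eta),\]
where $\xi\in CH_0(X_\eta)$ is a zero-cycle of degree $1$ and the second summand is supported on $Z_\eta\times_E X_\eta$ with $i\colon Z_\eta\hookrightarrow X_\eta$ of codimension $\ge 1$. First I would lift this identity to the correspondence group $C_S(X,X)=\varinjlim_W CH_{\dim X}(W)$. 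Let $\Gamma_f\in C_S(X,S)$ be the graph of $f$, so that $\Gamma_f^{*}=f^{*}$, and choose a lift $\overline{\xi}\in C_S(S,X)=CH_{\dim S}(X)$ of $\xi$; its support is a closed subscheme of $X$, hence proper over $S$ because $f$ is projective. A direct computation of the composition shows that $\overline{\xi}\circ\Gamma_f\in C_S(X,X)$ restricts on the generic fibre to $p_2^{*}\xi$. Letting $\widetilde{\Gamma}$ be the closure in $X\times_S X$ of $(i\times\mathrm{id})_{*}\beta$, supported on $\widetilde{Z}\times_S X$ for $\widetilde{Z}=\overline{Z_\eta}$ of codimension $\ge 1$ in $X$, the localization sequence for Chow groups yields
\[ \Delta_X-\overline{\xi}\circ\Gamma_f-\widetilde{\Gamma}=W\quad\text{in }C_S(X,X),\]
where $W$ is supported over the codimension $\ge 1$ complement $\Sigma=S\setminus\{\eta\}$, since it restricts to $0$ over $\eta$. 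All the supports involved are proper over the first factor, so each term is a bona fide element of $C_S(X,X)$.

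Applying $(-)^{*}$ and passing to $H^0(S,-)=H^0(X,F_X)=F(X)$, and using $(\overline{\xi}\circ\Gamma_f)^{*}=\Gamma_f^{*}\circ\overline{\xi}^{*}=f^{*}\circ\overline{\xi}^{*}$, the displayed identity becomes
\[ \mathrm{id}_{F(X)}=f^{*}\circ\overline{\xi}^{*}+\widetilde{\Gamma}^{*}+W^{*}.\]
The heart of the argument is that the last two terms vanish. By construction of the correspondence action (pull back, cup with the class keeping track of its support, push forward to the first factor), the image of $\widetilde{\Gamma}^{*}$ lies in the image of $H^0_{\widetilde{Z}}(X,F_X)$, while the image of $W^{*}$ lies in the image of $H^0_{f^{-1}(\Sigma)}(X,F_X)$, since the first-factor projections of the supports land in $\widetilde{Z}$ and $f^{-1}(\Sigma)$ respectively. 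As $X$ is regular and both $\widetilde{Z}$ and $f^{-1}(\Sigma)$ have codimension $\ge 1$, the coniveau filtration together with \Cref{thm:ReciprocityPurity} gives $H^0_{\widetilde{Z}}(X,F_X)=0=H^0_{f^{-1}(\Sigma)}(X,F_X)$, whence $\mathrm{id}_{F(X)}=f^{*}\circ\overline{\xi}^{*}$.

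Finally, because $\deg\xi=1$ and $\dim S\le 1$ there is no room for a correction supported over $\Sigma$ in $C_S(S,S)=\Z\cdot[S]$, so $f_{*}\overline{\xi}=[S]$ and hence $\overline{\xi}^{*}\circ f^{*}=\mathrm{id}_{F(S)}$, exactly as in the proof of \cite[Cor 10.2]{BRS}. Thus $f^{*}$ and $\overline{\xi}^{*}$ are mutually inverse and $F(S)\cong F(X)$. I expect the main obstacle to be the support bookkeeping in the correspondence action: one must verify that lifting the generic decomposition preserves properness over the first factor and, crucially, that the actions of $\widetilde{\Gamma}$ and $W$ genuinely factor through cohomology supported in codimension $\ge 1$, so that purity applies. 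This is precisely the ``minding the support'' subtlety flagged in \Cref{defn:proper-cor-action} and illustrated in the Brauer--Manin discussion above.
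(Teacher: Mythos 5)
Your strategy is the right one, and it is essentially the intended argument: the paper itself gives no proof of this statement (it is quoted from \cite[Thm 10.13]{BRS}), but your proof assembles exactly the mechanism the paper uses for its neighbouring results --- the splitting $\overline{\xi}^{*}\circ f^{*}=(f_{*}\overline{\xi})^{*}=\mathrm{id}$ from the proof of \cite[Cor 10.2]{BRS}, and the ``error terms act as zero on $H^0$ because their action factors through cohomology with support in codimension $\ge 1$, which vanishes by purity'' argument used for \cite[Thm 10.10]{BRS} and in the Brauer--Manin discussion. Your bookkeeping is correct: $\overline{\xi}\circ\Gamma_f=p_2^{*}\overline{\xi}$ restricts to $p_2^{*}\xi$ on the generic fibre, all supports are automatically proper over the first factor because $f$ is projective (so $C_S(X,X)=CH_{\dim X}(X\times_S X)$ and the localization sequence may be applied on the total space), and $H^0_{T'}(X,F_X)=0$ for closed $T'$ of codimension $\ge 1$ follows from \Cref{thm:ReciprocityPurity} or, more directly, from the injectivity of the first map in the Cousin resolution \eqref{eq:Cousin}.

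There is, however, one genuine gap: in \emph{both} cases allowed by the theorem, $S$ is not of finite type over $k$ (it is a henselization, respectively of finite type over a function field $K$), whereas every tool you invoke is only provided for finite-type objects --- \Cref{defn:proper-cor-action} explicitly requires $S$ of finite type over $k$ and its objects to be quasi-projective and smooth over $k$, and \Cref{thm:ReciprocityPurity} and \eqref{eq:Cousin} are statements about schemes in ${\rm Sm}$. As written, every step of your argument is therefore applied outside the domain where the machinery exists. The missing step is a Noetherian approximation: write $S=\varprojlim_i S_i$ with $S_i$ smooth affine of finite type over $k$ and affine transition maps, descend $f$, $\xi$, $Z$, $\beta$ and the rational equivalence witnessing \eqref{eq:DecompDiag} to some $S_i$, run your correspondence argument there, and pass to the colimit using that $F$ and Chow groups commute with such limits. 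Two points then need care. First, in the henselian case the models $S_i$ have dimension equal to that of the ambient smooth scheme, which may be $\ge 2$; fortunately your argument never truly uses $\dim S\le 1$ (for any integral $S_i$ one has $C_S(S_i,S_i)=CH_{\dim S_i}(S_i)=\Z\cdot[S_i]$, since there are no nonzero top-dimensional cycles supported on proper closed subsets), but then $\Sigma=S\setminus\{\eta\}$ is no longer closed, so you must replace it by the closed subset $T=S_i\setminus V$ that the localization sequence actually produces and argue with $H^0_{f^{-1}(T)}(X,F_X)=0$. Second, $\deg\xi=1$ is not part of Definition \eqref{eq:DecompDiag}; you should note that it follows by restricting the decomposition to the generic point of the first factor, where the term supported on $Z\times X$ vanishes.
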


\begin{remark}
In \cite[Problem 1.2]{ABBG} the following problem is posed:

Let \(k\) be algebraically closed with \(\text{char}(k)=p>0\) and \(X\) a smooth and proper scheme over \(k\) with decomposition of the diagonal. Do we then have that 
\[H^0(X, R^i\epsilon_* \Z/p(j)) = 0, \quad \forall i\neq 0?\]

\cref{thm:DecompDiag} gives a positive answer to the problem if \(X\) is projective over \(k\). Indeed, if we take \(S=\Spec k\) and \(F=R^i\epsilon_* \Z/p(j)\) we observe that \(F(k)=0\), 
see \eqref{eq:ASW}. 
\end{remark}

\begin{theorem}[{\cite[Cor 11.22]{BRS}}]
Let $f : X \to S$ be a projective morphism between smooth integral and quasi-projective k-schemes. 
Let $\dim S=e$ and $\dim X = d$. Assume
the diagonal of the generic fiber of $f$ decomposes. Then 
\[f_* \colon Rf_*F^d_X \overset{\cong}{\longrightarrow} F_S^e[e-d]\]
is an isomorphism, where $F^d$ is one of the following sheaves
\begin{itemize}
    \item \(\Omega^d_{/k}\), \(\Omega^d_{/k}/d\log K^M_d\)
    \item (\(k\) is algebraically closed): \(R^d\epsilon_* \Q/\Z(d)\)
    \item (\(\text{char } k=p>0\)):
    \begin{itemize}
        \item \(W_n\Omega^d/B_{\infty}\)
        \item  \(R^i\epsilon_*(\Z/p^n(d))\)
    \item  \(G\langle d \rangle\), for \(G\) a smooth commutative unipotent $k$-group
    \item  \(H^1(G)\langle d\rangle\), for \(G\) a finite \(p\)-group over \(k\).
    \end{itemize}
    \end{itemize}
\end{theorem}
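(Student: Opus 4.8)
The plan is to realise \(f_*\) and a candidate inverse as actions of Chow correspondences and to prove they are mutually inverse by a relative Bloch--Srinivas idempotent argument, using the proper correspondence action of \Cref{defn:proper-cor-action}, Saito's purity theorem (\Cref{thm:ReciprocityPurity}), and the fact that each sheaf \(F^d\) in the list satisfies the field-level vanishing \(F^d(K') = 0\) for every function field \(K'/k\) with \(\operatorname{trdeg}(K'/k) < d\).

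First I would unwind the decomposition of the diagonal of the generic fibre. With \(K = k(S)\), \(X_K\) the generic fibre and \(m := d - e = \dim_K X_K\), the hypothesis provides
\[[\Delta_{X_K}] = p_2^*\xi + (i \times \mathrm{id})_*\beta \in CH^m(X_K \times_K X_K),\]
with \(\xi \in CH_0(X_K)\), a closed immersion \(i \colon Z \hookrightarrow X_K\) of codimension \(\ge 1\), and \(\beta \in CH_m(Z \times_K X_K)\). Pushing this identity forward along one projection shows \(\deg \xi = 1\), the \(Z\)-supported term vanishing for dimension reasons. Transposing the identity (so that the pushforward rather than the pullback appears) and spreading everything out, I would obtain a degree-one relative zero-cycle \(\tilde\xi \in CH_e(X)\) together with a cycle identity
\[[\Delta_X] = (\tilde\xi \times_S X) + \Theta \quad \text{in } CH^m(X \times_S X),\]
valid over a dense open \(S^\circ \subset S\), where \(\Theta\) is supported on \(X \times_S \tilde Z\) for a closed \(\tilde Z \subset X\) spreading out \(Z\), of relative dimension \(\le m-1\) over \(S\).

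Next I would read off the two composites. Via the correspondence action, the self-correspondence \(\tilde\xi \times_S X\) of \(X\) over \(S\) factors as the pushforward \(f_* \colon Rf_*F^d_X \to F^e_S[e-d]\) followed by a map \(g_\xi \colon F^e_S[e-d] \to Rf_*F^d_X\) induced by \(\tilde\xi\) (the twist, for these sheaves \(F^e = \gamma^m F^d\), being handled by the cancellation theorem \Cref{thm:MSCancellation} and the projective bundle formula \Cref{thm:pbf} that enters the construction of \(f_*\)). Since \([\Delta_X]\) acts as the identity of \(Rf_*F^d_X\), the spread-out identity gives
\[g_\xi \circ f_* = \mathrm{id}_{Rf_*F^d_X} - \Theta^* \pmod{\text{cycles over } S \setminus S^\circ}.\]
On the other side, \(f_* \circ g_\xi\) is the action of \(f_*\tilde\xi = \deg(\xi)\cdot[S] = [S]\), hence is the identity of \(F^e_S[e-d]\). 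Thus it remains only to show that \(\Theta^*\) and the error over \(S \setminus S^\circ\) act as zero.

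This last point is the crux and is where the hypotheses on \(F^d\) are used. Because \(\tilde Z\) and the loci over \(S \setminus S^\circ\) carry function fields of transcendence degree \(\le d-1\) over \(k\), the correspondence \(\Theta^*\) factors through the cohomology of \(F^d\) on schemes of dimension \(\le d-1\); by the field-level vanishing of \(F^d\) this factor is zero, exactly as in \cite[Prop 9.13]{BRS} and the proof of \cite[Thm 10.10]{BRS}. To upgrade the generic-point decomposition to an equality of morphisms in \(D(S_\Nis)\), I would check the two composites on stalks, using \Cref{thm:ReciprocityPurity} and the resulting Cousin resolution \eqref{eq:Cousin} to reduce to the henselian local rings of \(S\) at its points; at each point the decomposition specialises and the error cycles, living in strictly smaller relative dimension, are annihilated by the same vanishing. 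The main obstacle is thus twofold: establishing that lower-dimensional correspondences act trivially (the content of \cite[Prop 9.13]{BRS}, resting on purity and the vanishing property), and verifying the field-level vanishing \(F^d(K')=0\) for \(\operatorname{trdeg}(K'/k)<d\) for each sheaf in the list --- via Geisser--Levine \cite{GL} and Bloch--Kato--Gabber \cite{BK} for \(\Omega^d_{/k}\), \(\Omega^d_{/k}/d\log K^M_d\) and the \(\langle d\rangle\)-twists of unipotent and finite \(p\)-group sheaves, and additionally the Rost--Voevodsky theorem \cite{V11} for \(R^d\epsilon_*\Q/\Z(d)\) over algebraically closed \(k\), as recorded after \cite[Thm 10.10]{BRS}.
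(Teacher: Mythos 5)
Your strategy is the right one, and it is essentially the argument behind \cite[Cor 11.22]{BRS}; note that the present paper gives no proof of this statement at all (it only quotes BRS), so the closest internal comparison is with its sketches for \cite[Cor 10.2]{BRS} and \cite[Thm 10.10]{BRS}, both of which your argument mirrors faithfully. The key moves are all correct: transposing the decomposition so that the distinguished zero-cycle sits in the first factor, spreading out and using the localization sequence to write $[\Delta_X]=\tilde\xi\times_S X+\Theta+\Sigma$ in $CH_d(X\times_S X)$ with $\Theta$ supported on $X\times_S\tilde Z$ and $\Sigma$ supported over $S\setminus S^\circ$; letting this act through \Cref{defn:proper-cor-action}, so that the main term factors as $g_\xi\circ f_*$ (this factorization, and the projection-formula identity $f_*\circ g_\xi=(f_*\tilde\xi)^*=\mathrm{id}$, are exactly the compatibilities established in \cite[Sec.\ 9]{BRS}); and killing $\Theta^*$ and $\Sigma^*$ by \cite[Prop 9.13]{BRS} together with the vanishing $F^d(K')=0$ for $\operatorname{trdeg}(K'/k)<d$, verified for the listed sheaves via Geisser--Levine \cite{GL}, Bloch--Kato--Gabber \cite{BK} and Rost--Voevodsky \cite{V11}, as in the remark following \cite[Thm 10.10]{BRS}.

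There is, however, one genuine gap: the identification $F^e\cong\gamma^{d-e}F^d$, without which $f_*$ does not even have the stated target --- the Gysin/projective-bundle construction produces a map into $(\gamma^{d-e}F^d)_S[e-d]$, nothing more. You dismiss this parenthetically as ``handled by the cancellation theorem and the projective bundle formula'', but \Cref{thm:MSCancellation} only gives $\gamma^{d-e}(F\langle d\rangle)\cong F\langle e\rangle$ for sheaves literally presented as twists, i.e.\ $G\langle d\rangle$ and $H^1(G)\langle d\rangle$ (and $\Omega^d_{/\Z}=\G_a\langle d\rangle$ in characteristic $0$). The sheaves $\Omega^d_{/k}$, $\Omega^d_{/k}/d\log K^M_d$, $W_n\Omega^d/B_\infty$, $R^i\epsilon_*(\Z/p^n(d))$ and $R^d\epsilon_*\Q/\Z(d)$ are not given as $d$-th twists of anything, so cancellation says nothing about them; what is needed is the de Rham--Witt computation $W_r\Omega^{q-n}\xrightarrow{\simeq}\gamma^n(W_r\Omega^q)$ of \cite[Thm 11.8]{BRS} (quoted in this paper), its characteristic-zero analogue \cite[Cor 11.2]{BRS}, and the exactness of $\gamma^n$ to pass to the relevant quotients and subsheaves ($/B_\infty$, the $\log$-part via \cite{GL}, the prime-to-$p$ part). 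This is precisely why the statement appears in BRS as a corollary of the twist computations of their Section 11 rather than of the general birational results of Section 10. A minor further point: your final ``check on stalks'' step is superfluous. Since the cycle identity already holds globally in $CH_d(X\times_S X)$ (with the defect over $S\setminus S^\circ$ recorded by $\Sigma$), functoriality of the correspondence action gives $\mathrm{id}=(\tilde\xi\times_S X)^*+\Theta^*+\Sigma^*$ directly as an identity of maps in $D(S_{\Nis})$, and both error terms die by the same vanishing argument; no gluing or specialization over the points of $S$ is needed.
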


\begin{example}
If \(k\) is algebraically closed and \(X\) is smooth and projective over \(k\) of dimension \(d\) such that the diagonal of \(X\) decomposes, then
\[H^i(X, R^{d+1}\epsilon_*\Z/p^n(d)) = 0, \qquad \text{for all } i.\]
\end{example}

\section{Further results}
In this section we give an overview of some more recent results  obtained in \cite{RS-ZNP}  and \cite{RS-AS}.
This section was not part of the original lecture series given at CAS.

\subsection{Zariski-Nagata Purity}
Let $U$ be a smooth $k$-scheme over a perfect base field $k$ and let $K$ be a function field over $k$. 
For any presheaf with transfers we have a pairing
\begin{equation}\label{eq:pairing1}
    F(U_K)\otimes \Cor_K(\Spec K, U_K)\to F(K),\quad (a,\gamma)\mapsto \gamma^*a,
\end{equation}
where $U_K=U\otimes_k K$ and $\Cor_K$ denotes the finite correspondences on smooth $K$-schemes.
Moreover, for $X$ an integral finite type  $k$-scheme of dimension $d$, 
and $D\subset X$ a closed subscheme with ideal sheaf $I_D$ and open complement $U=X\setminus D$, we have by \cite[Thm 2.5]{KS-GCFT} a surjective map
\begin{equation}\label{eq:pairing2}
    \Cor_K(\Spec K, U_K)\twoheadrightarrow H^d(X_{K,\Nis}, K^M_d(\mathcal{O}_{X_K}, I_{D_K})),
\end{equation}
where $K^M_d(\mathcal{O}_{X_K}, I_{D_K})={\rm Ker}(K^M_d(\mathcal{O}_{X_K})\to K^M_d(\mathcal{O}_{D_K}))$. The map is induced
by the isomorphism $\Z\cong H^d_x(X_{K,\Nis}, K^M_d(\mathcal{O}_{X_K}, I_{D_K}))$, for $x$ a closed point in $U_K$,
stemming from the Gersten resolution, see \cite[Prop 10 (8)]{Kerz}.

\begin{theorem}[{\cite[Thm 1.6]{RS-ZNP}}]\label{thm:ZNP}
Let $X$ be a smooth integral projective $k$-scheme of dimension $d$ 
and let  $\sum_{i=1}^r   D_i$ be an SNCD divisor with complement  $U=X\setminus \cup_{i=1}^r D_i$.
Let $\mathfrak{n}=(n_1,\ldots, n_r)\in (\mathbb{N}_{\ge 1})^r$ and set $D_{\mathfrak{n}}=\sum_{i=1}^r n_i D_i$.
Let $F\in \RSC_{\Nis}$. Then the following is equivalent for $a\in F(U)$:
\begin{enumerate}[label=(\roman*)]
    \item\label{ZNPi} $a\in \widetilde{F}(X,D_{\mathfrak{n}})$;
    \item\label{ZNPii} 
    $a\in \widetilde{F}(\mathcal{O}_{X,\eta_i}, \mathfrak{m}_{\eta_i}^{-n_i})$, all $i$, where $\eta_i$ is the generic point of $D_i$;
    \item\label{ZNPiii} for any function field $K$ the map $(a_K, -)_{U_K/K}\colon \Cor_K(\Spec K, U_K)\to F(K)$ induced by 
    \eqref{eq:pairing1} factors via \eqref{eq:pairing2} to give a map
    \begin{equation}\label{eq:pairing-mod}
        (a_K,-)_{(X_K,D_{\mathfrak{n},K})/K}\colon H^d(X_{K,\Nis}, K^M_d(\mathcal{O}_{X_K}, I_{D_{\mathfrak{n},K}}))\to F(K),
    \end{equation}
    where $a_K$ denotes the pullback of $a$ to $F(U_K)$.
\end{enumerate}
\end{theorem}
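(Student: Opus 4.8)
The plan is to treat \ref{ZNPi}$\Leftrightarrow$\ref{ZNPiii} as a Milnor $K$-theoretic reformulation of the modulus condition and \ref{ZNPi}$\Leftrightarrow$\ref{ZNPii} as the genuine Zariski--Nagata purity, reducing the modulus to its values at the codimension-one points $\eta_i$. Concretely I would prove \ref{ZNPi}$\Rightarrow$\ref{ZNPii} (trivial), \ref{ZNPi}$\Leftrightarrow$\ref{ZNPiii} (formal, via Kato--Saito), and \ref{ZNPii}$\Rightarrow$\ref{ZNPi} (the substance), throughout using the local description \eqref{eq:Comp-Mod1} of $\widetilde F(X, D_{\mathfrak n})$.

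The implication \ref{ZNPi}$\Rightarrow$\ref{ZNPii} is immediate: by \eqref{eq:Comp-Mod1} membership in $\widetilde F(X, D_{\mathfrak n})$ is tested against all geometric henselian discrete valuation fields $L$ and all $\rho\in U(L)$, and applying this to $L=\mathrm{Frac}(\mathcal{O}^h_{X,\eta_i})$ with the tautological $\rho$ gives \ref{ZNPii}, since by the SNC hypothesis only $D_i$ passes through $\eta_i$ and hence $v_L(\rho^*D_{\mathfrak n})=n_i$. For \ref{ZNPi}$\Leftrightarrow$\ref{ZNPiii} I would identify the kernel of the surjection \eqref{eq:pairing2}: via the Gersten resolution for Milnor $K$-theory \cite{Kerz} and the relative structure, it is generated by classes $\mathrm{div}_C(f)$ with $C\subset X_K$ a curve and $f\equiv 1 \bmod D_{\mathfrak n}|_C$. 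The pairing $(a_K,-)_{U_K/K}$ descends to \eqref{eq:pairing-mod} precisely when it kills these classes, i.e.\ when $\mathrm{div}_C(f)^*a_K=0$ for all such $f$; that this vanishing is equivalent to the modulus condition is the reciprocity/residue argument already carried out in the proof of \Cref{prop:KahlerReciprocity}. Matching the two via \eqref{eq:Comp-Mod1} (over all function fields $K$, so as to capture all transcendence degrees) yields \ref{ZNPi}$\Leftrightarrow$\ref{ZNPiii}.

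It remains to prove the purity implication \ref{ZNPii}$\Rightarrow$\ref{ZNPi}, where the codimension-one bounds must be propagated to all $\rho$. Given $L$ and $\rho\in U(L)$, I would extend to $\bar\rho\colon\Spec\mathcal{O}_L\to X$ (possible since $X$ is proper) and let $x$ be the image of the closed point. If $x\in U$ the required condition is automatic by semi-purity, so one may assume $x$ lies on some of the $D_i$. Using transversality of the SNCD, I would reduce by a blow-up along the strata to the case where $\bar\rho$ meets the divisor only along the generic points $\eta_i$, so that the conductor of $\rho^*a$ is governed by $\sum_i n_i\,v_L(\rho^*D_i)=v_L(\rho^*D_{\mathfrak n})$. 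The two inputs that make this work are the additivity of the relative Milnor $K$-group in the divisor $D_{\mathfrak n}$ and a semicontinuity/specialization property of the motivic conductor from \cite{RS1}, which together force the bound at the $\eta_i$ to dominate the bound at the deeper strata $D_i\cap D_j$.

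The hard part will be exactly this last step: controlling the conductor at points lying on intersections of several components using only the generic-point data. This is where the transversality hypothesis is indispensable, since it separates the contributions of the individual multiplicities $n_i$ and guarantees that no extra ramification is created along the higher-codimension strata. The crux is to establish the required additivity and semicontinuity of the motivic conductor compatibly with \eqref{eq:pairing2}; once this is available, the purity implication, and with it the full equivalence, follows.
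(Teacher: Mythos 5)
These lecture notes do not actually prove \Cref{thm:ZNP}; it is quoted verbatim from \cite[Thm 1.6]{RS-ZNP}, so your proposal can only be measured against its own internal soundness. Your skeleton (\ref{ZNPi}$\Rightarrow$\ref{ZNPii} by restriction, \ref{ZNPi}$\Leftrightarrow$\ref{ZNPiii} via the pairing, \ref{ZNPii}$\Rightarrow$\ref{ZNPi} as the purity core) is reasonable, and the first step is indeed correct via \eqref{eq:Comp-Mod1}. But the step you call ``formal'' is not. You assert that the kernel of \eqref{eq:pairing2} is generated by the classes ${\rm div}_C(f)$ with $f\equiv 1 \bmod D_{\mathfrak{n}}|_C$, ``via the Gersten resolution''. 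The Gersten input \cite[Prop 10 (8)]{Kerz} is only used to compute $H^d_x$ at closed points $x\in U_K$, i.e., to \emph{construct} the map \eqref{eq:pairing2}; it says nothing about its kernel. The inclusion $\langle {\rm div}_C(f) : f\equiv 1\bmod D_{\mathfrak{n}}|_C\rangle \subseteq \ker$\eqref{eq:pairing2} is precisely the well-definedness of the cycle class map with modulus $CH_0(X|D_{\mathfrak{n}})\to H^d_{\Nis}(X, K^M_d(\mathcal{O}_X, I_{D_{\mathfrak{n}}}))$, which the text directly after the theorem flags as a nontrivial external result (proved in various cases by Krishna, Gupta--Krishna, and in \cite{RS-cycle-class}); the reverse inclusion amounts to the \emph{injectivity} of that cycle class map, which is not known in this generality. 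Your equivalence needs both inclusions, so this step presupposes results at least as hard as the theorem, one of which is open. (Separately, the residue argument of \Cref{prop:KahlerReciprocity} is special to differential forms; for general $F\in\RSC_{\Nis}$ relating ``killing ${\rm div}_C(f)$'' to the $h_0^{\boks}$-modulus of \Cref{def:ModulusCondition} requires its own correspondence-theoretic argument.)

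In \ref{ZNPii}$\Rightarrow$\ref{ZNPi} you have correctly located the difficulty but not resolved it. After blowing up a stratum $D_{i_1}\cap\cdots\cap D_{i_s}$, the total transform of $D_{\mathfrak{n}}$ contains the exceptional divisor $E$ with multiplicity $n_{i_1}+\cdots+n_{i_s}$, so to run your reduction you must know $a\in \widetilde{F}\bigl(\mathcal{O}_{\widetilde{X},\eta_E},\mathfrak{m}_{\eta_E}^{-(n_{i_1}+\cdots+n_{i_s})}\bigr)$, a codimension-one bound at a divisor lying over a deeper stratum. Hypothesis \ref{ZNPii} does not give this, and no off-the-shelf ``semicontinuity/specialization property of the motivic conductor'' in \cite{RS1} produces it: propagating the bounds at the $\eta_i$ to exceptional divisors over $D_i\cap D_j$ \emph{is} the Zariski--Nagata purity statement being proved, so invoking such a lemma is circular; likewise ``additivity of the relative Milnor $K$-group in the divisor'' is not an available tool. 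That this is where the real content lies is also visible from the remarks following the theorem: the case $r=1$ follows from \cite[Cor 8.6(2)]{Sai} and the case $\mathfrak{n}=(1,\ldots,1)$ from \cite[Cor 2.4]{Sailog}, i.e., exactly the cases with no interaction of multiplicities at crossings were known before \cite{RS-ZNP}. As written, the crux of your argument is deferred to two unproven lemmas, so the proposal restates the difficulty rather than overcoming it.
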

Note that the equivalence of \ref{ZNPi} and \ref{ZNPii} is a statement of Zariski-Nagata type (``purity of the branch locus'').
It continues to hold if $X$ is assumed to be quasi-projective and $(X, \sum_i D_i)$ has a projective SNC-compactification
(which is always the case in characteristic 0), see the proof of \cite[Cor 6.10]{RS-ZNP}.
In case  $r=1$ (i.e., $D_{\mathfrak{n}}$ has just one component) the equivalence 
of \ref{ZNPi} and \ref{ZNPii} also follows  from \cite[Cor 8.6(2)]{Sai}.
For $\mathfrak{n}=(1,\ldots, 1)$ (i.e., $D_{\mathfrak{n}}$ is reduced SNCD), it follows from  \cite[Cor 2.4]{Sailog}.

Observe that from the equivalence of \ref{ZNPi} and \ref{ZNPiii} we obtain a map
\begin{equation}\label{eq:rec-map}
CH_0(X|D_{\mathfrak{n}})\twoheadrightarrow H^d(X_{\Nis}, K^M_d(\mathcal{O}_X, I_{D_{\mathfrak{n}}}))\to \Hom (\widetilde{F}(X, D_{\mathfrak{n}}), F(k)),
\end{equation}
where $CH_0(X|D_{\mathfrak{n}})$ denotes the Chow group of zero cycles with modulus introduced in 
\cite{Kerz-Saito} and the first map is induced from \eqref{eq:pairing2}. 
(This factorization is proved in various cases, e.g., by Krishna and Gupta-Krishna.
For the situation at hand, see \cite{RS-cycle-class}). Taking the limit of the composition 
we get a natural map
\[C(U):=\varprojlim_{\mathfrak{n}} CH_0(X|D_{\mathfrak{n}})\to \Hom(F(U), F(k)).\]
If $k$ is a finite field and $F=\Hom_{\rm cts}(\pi_1^{\rm ab}(-), \Q/\Z)$, then 
$F(k)\cong\Q/\Z$ and this map
is the {\em reciprocity homomorphism} constructed in \cite[Prop 3.2]{Kerz-Saito},
similarly the limit over $\mathfrak{n}$ of the second map in \eqref{eq:rec-map}
is the {\em reciprocity homomorphism} constructed in \cite{KS-GCFT}. 

\subsection{Abbes-Saito formula}
Let $F\in \RSC_{\Nis}$. We ask the following:
\begin{enumerate}[label=\arabic*.]
    \item Is it possible to give a more computable description of $\widetilde{F}$, in particular without using the transfers structure of $F$?
    \item\label{Q:2} For $X$ smooth and $D\subset X$ a smooth divisor, we get from the Gysin sequence 
         \ref{thm:gysin} an isomorphism 
         \[\widetilde{F}(X,D)/F(X)\cong \underline{\Hom}_{\PST}(\G_m,  F)(D).\]
         Can we also describe the quotients $\widetilde{F}(X,nD)/\widetilde{F}(X,(n-1)D)$, for $n\ge 2$?
\end{enumerate}
In \cite{RS-AS} it is shown  that these questions can be approached using a method introduced by Abbes and (Takeshi) Saito
in \cite{AbbesSaito11} and \cite{TakeshiSaito}  to study the ramification of Galois torsors by means of
dilatations. For simplicity we assume in the following that:

\begin{enumerate}
    \item[$(*)$] $X$ is smooth, $D$ is a smooth divisor on $X$, $U=X\setminus D$, and $(X,D)$ has a projective SNC compactification, i.e.,
    there exists an open embedding  $X\hookrightarrow\overline{X}$ into a smooth projective scheme $\overline{X}$ such that $\overline{X}\setminus U$ is the support of a divisor with simple normal crossings\footnote{In \cite{RS-AS} more generally the case where $D$ is a SNCD is considered.}.
\end{enumerate}

The dilatation $P^{(nD)}_X$, for $n\ge 1$, is the blow-up of $X\times X$ in $nD$ diagonally embedded and with the strict transforms
of $X\times nD$ and $nD\times X$ removed. It comes with two maps
\[p_1, p_2 \colon P^{(nD)}_X\rightrightarrows X\]
induced by the two projection maps $X\times X\to X$. Note that the open immersion $U\times U\hookrightarrow X\times X$ extends to
an open immersion $U\times U\hookrightarrow P^{(nD)}_X$.

\begin{theorem}[{\cite[Theorem 1]{RS-AS}}]\label{thm:AS}
Assume $(*)$ and let $n\ge 1$. Then
\[\widetilde{F}(X, nD)=\{a\in F(U)\mid p_1^*a=p_2^*a \text{ in } F(U\times U)/F(P^{(nD)}_X)\}.\]
\end{theorem}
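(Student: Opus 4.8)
The plan is to reduce the global identity to a purely local statement along $D$ and then to analyse the Abbes--Saito dilatation over a geometric henselian discrete valuation ring. First I would localize. Since $D$ is smooth, the quasi-projective extension of \Cref{thm:ZNP} noted just after its statement (which applies under $(*)$, with $\mathfrak n=(n,\dots,n)$ componentwise) reduces membership $a\in\widetilde F(X,nD)$ to the local conditions $a\in\widetilde F(\mathcal O_{X,\eta},\mathfrak m_\eta^{-n})$ at the generic points $\eta$ of $D$. I would then check that the right-hand side is local in the same sense: the dilatation construction commutes with the localization $X\rightsquigarrow\Spec\mathcal O_{X,\eta}$ and with henselization, the open immersion $U\times U\hookrightarrow P^{(nD)}_X$ and the two projections $p_1,p_2$ being compatible with these base changes. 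Together with the semilocal description \eqref{eq:Comp-Mod1} of $\widetilde F$, this reduces the theorem to the case where $X=\Spec\mathcal O_L$ for a henselian DVR $L$ of geometric type, $D$ is the closed point with multiplicity $n$, and $U=\Spec k(X)$; here $P^{(nD)}_X$ is the smooth local dilatation, whose fibre over the closed point is a torsor under a vector group and whose generic part is $U\times U$.

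Next I would prove the two inclusions in this local setting. For $\subseteq$, assume $a$ has modulus $nD$. The defining feature of the dilatation is that $p_1,p_2\colon P^{(nD)}_X\to X$ are \emph{congruent modulo $nD$}, so that although each individual pullback $p_i^*a$ acquires poles along the boundary $P^{(nD)}_X\setminus(U\times U)$, these poles cancel in the difference. Concretely I would fix a projective SNC compactification of $P^{(nD)}_X$ and bound the modulus of $p_1^*a-p_2^*a$ using $a\in\widetilde F(X,nD)$ together with \eqref{eq:Comp-Mod1}, showing that the boundary contribution is trivial and hence that the difference extends to a section of $F(P^{(nD)}_X)$. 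For $\supseteq$, assume $p_1^*a=p_2^*a$ modulo $F(P^{(nD)}_X)$ and verify the local modulus bound via \eqref{eq:Comp-Mod1}: given $\rho\in U(L)$, a second point $\rho'\in U(L)$ congruent to $\rho$ modulo $nD$ produces a lift $\Spec\mathcal O_L\to P^{(nD)}_X$, and the hypothesis forces $\rho^*a-\rho'^*a$ to extend without poles; comparing this with the explicit conductor filtration $\widetilde F(\mathcal O_L,\mathfrak m_L^{-\bullet})$ then yields $\rho^*a\in\widetilde F(\mathcal O_L,\mathfrak m_L^{-n})$.

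The main obstacle is exactly the local identification underlying both inclusions: proving that \enquote{extends across the dilatation boundary} is \emph{equivalent} to \enquote{has modulus at most $n$}. This requires the explicit structure of the special fibre of the dilatation together with Saito's purity theorem \Cref{thm:ReciprocityPurity}, and careful bookkeeping of the asymmetry between $p_1$ and $p_2$, since $p_i^*(nD)$ individually contributes on the boundary while the difference does not. For $n=1$ this identification is governed by the Gysin isomorphism $\widetilde F(X,D)/F(X)\cong\underline{\Hom}_{\PST}(\G_m,F)(D)$ coming from \Cref{thm:gysin}, which I would use as a base case and a consistency check; for general $n$ one must analyse the higher graded pieces $\widetilde F(X,nD)/\widetilde F(X,(n-1)D)$, and it is the cancellation theorem \Cref{thm:MSCancellation} together with the twist formalism that lets one match the dilatation computation with the successive steps of the conductor filtration.
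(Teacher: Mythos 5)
Your overall scaffolding is sensible --- reducing to local data along $D$ via the quasi-projective form of \Cref{thm:ZNP}, and using the universal property of the dilatation (pairs of $\mathcal{O}_L$-points of $X$ congruent modulo $nD$ lift to $\mathcal{O}_L$-points of $P^{(nD)}_X$) for the inclusion $\supseteq$ --- but the proposal never actually proves the statement it correctly identifies as \enquote{the main obstacle}: that, over a henselian discrete valuation ring of geometric type, extension of $p_1^*a-p_2^*a$ across the boundary of the dilatation is \emph{equivalent} to the modulus bound $\rho^*a\in\widetilde{F}(\mathcal{O}_L,\mathfrak{m}_L^{-n})$. For $\supseteq$ your hypothesis only controls \emph{differences} $\rho^*a-\rho'^*a$ of pullbacks along congruent points, and you propose to conclude by \enquote{comparing with the explicit conductor filtration $\widetilde{F}(\mathcal{O}_L,\mathfrak{m}_L^{-\bullet})$}. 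For a general $F\in\RSC_{\Nis}$ no such explicit description exists --- the formulas of \Cref{thm:Omega} and \Cref{thm:Wn} are special to differential forms and Witt vectors --- so this step is vacuous: converting a regularity condition on differences into a bound on the motivic modulus of $a$ itself is exactly the content of the theorem. It is precisely here that the proof in \cite{RS-AS} brings in the theory of higher local symbols along Par\v{s}in chains developed in \cite{RS-HLS} (which in turn rests on \Cref{thm:ZNP} and the pushforward and correspondence-action machinery of Section \ref{sec:CohRec}), none of which appears in your argument. Saito's purity (\Cref{thm:ReciprocityPurity}) does let you test extension across $P^{(nD)}_X$ at the codimension-one points of the boundary, but it says nothing about how that extension condition compares with the modulus filtration, so it cannot substitute for the local-symbol input.

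There is also a circularity in your proposed treatment of general $n$: the description of the graded pieces $\widetilde{F}(X,nD)/\widetilde{F}(X,(n-1)D)$ for $n\ge 2$ is \Cref{thm:AS2}, which in \cite{RS-AS} is \emph{deduced from} \Cref{thm:AS}, so it cannot be an ingredient in the proof of the latter; similarly the $n=1$ identification coming from the Gysin sequence (\Cref{thm:gysin}) concerns the quotient $\widetilde{F}(X,D)/F(X)$ and does not by itself relate the modulus-one condition to the dilatation $P^{(D)}_X$. Finally, invoking the cancellation theorem (\Cref{thm:MSCancellation}) together with \enquote{the twist formalism} to \enquote{match the dilatation computation with the successive steps of the conductor filtration} names a hope rather than an argument: no mechanism is given by which twists interact with the special fibre of $P^{(nD)}_X$.
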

In particular the theorem applies to $F=H^1_{\rm fppf}(-,G)$\footnote{In this case even without the assumption on the existence of a projective SNC compactification.}, where $G$ is a commutative finite $k$-group scheme (not necessarily \'etale).
A version of this formula was proved by Abbes-Saito for $G$   any \'etale $k$-group 
(not necessarily commutative). 
For more details  and precise references see \cite[Ex 2.12]{RS-AS}.

The proof of the above theorem uses heavily the theory of higher local symbols along Par\v{s}in chains for reciprocity sheaves
developed in \cite{RS-HLS}, which in turn relies on Theorem \ref{thm:ZNP} and Section \ref{sec:CohRec}.

Using Theorem \ref{thm:AS} we obtain the following partial description of the quotients considered in question \ref{Q:2} above.

\begin{theorem}[{\cite[Thm 4.12]{RS-AS}}]\label{thm:AS2}
Assume $(*)$ and let $n\ge 2$. Then there is an injective map
\[{\rm char}^{(nD)}_F \colon \frac{\widetilde{F}(X,nD)}{\widetilde{F}(X, (n-1)D)}\hookrightarrow 
H^0\left(D, \Omega^1_X(nD)_{|D}\otimes_{\mathcal{O}_D} \underline{\Hom}_{{\rm Sh}_D}(\mathcal{O}_D, F_D)\right),\]
where $\underline{\Hom}_{{\rm Sh}_D}$ denotes the  internal hom in the category of Nisnevich sheaves of abelian groups on smooth
schemes over $D$.
\end{theorem}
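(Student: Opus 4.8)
The plan is to use the Abbes--Saito presentation of the modulus filtration from \cref{thm:AS} to reduce the computation of the graded quotient to the analysis of a single explicit object---the special fibre of the dilatation over $D$---and to define $\mathrm{char}^{(nD)}_F$ as the \emph{linear part} of a canonical difference class living on that fibre. Concretely, fix $a\in\widetilde{F}(X,nD)$ and set $\delta(a):=p_1^*a-p_2^*a\in F(U\times U)$. By \cref{thm:AS} the class of $\delta(a)$ in $F(U\times U)/F(P^{(nD)}_X)$ vanishes, so $\delta(a)$ lifts to a section $\widetilde{\delta}(a)\in F(P^{(nD)}_X)$. As $P^{(nD)}_X$ is smooth (being a dilatation of the smooth $X\times X$ along smooth data) and $U\times U\subset P^{(nD)}_X$ is dense open, the restriction $F(P^{(nD)}_X)\to F(U\times U)$ is injective by the injectivity theorem for reciprocity sheaves, equivalently by the Cousin resolution \eqref{eq:Cousin} attached to \cref{thm:ReciprocityPurity}; hence $\widetilde{\delta}(a)$ is the unique such lift and is functorial in $a$. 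The diagonal $U\hookrightarrow U\times U$ extends to a section $X\hookrightarrow P^{(nD)}_X$ along which $\delta(a)$ restricts to $a-a=0$, so $\widetilde{\delta}(a)$ vanishes there, in particular over $D$.

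I would then analyse the special fibre $E^{(nD)}$ of $P^{(nD)}_X$ lying over $D$. The core geometric input, which is the Abbes--Saito computation of the dilatation, is that $E^{(nD)}$ is canonically the total space of a vector bundle over $D$ whose sheaf of fibrewise-linear functions is $\Omega^1_X(nD)_{|D}$, with zero section the locus cut out above. Restricting gives $s_a:=\widetilde{\delta}(a)_{|E^{(nD)}}\in F(E^{(nD)})$, vanishing on the zero section. Transporting the telescoping identity $q_{12}^*\delta+q_{23}^*\delta=q_{13}^*\delta$ on $X\times X\times X$ to a triple dilatation whose special fibre over $D$ is $E^{(nD)}\times_D E^{(nD)}$, one sees that $s_a$ is \emph{additive} for the group law on $E^{(nD)}$. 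An additive, zero-section-trivial $F$-valued section of a vector bundle with linear functions $\Omega^1_X(nD)_{|D}$ is exactly an element of $H^0\!\big(D,\Omega^1_X(nD)_{|D}\otimes_{\mathcal{O}_D}\underline{\Hom}_{\mathrm{Sh}_D}(\mathcal{O}_D,F_D)\big)$, because $\underline{\Hom}_{\mathrm{Sh}_D}(\mathcal{O}_D,F_D)$ is precisely the sheaf of additive maps $\G_a\to F$ over $D$-schemes. I would set $\mathrm{char}^{(nD)}_F(a)$ equal to this element.

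Both the factorisation through the quotient and injectivity would follow from the single equivalence
\[a\in\widetilde{F}(X,(n-1)D)\iff s_a=0\iff \mathrm{char}^{(nD)}_F(a)=0,\]
the right-hand equivalence being immediate from additivity. For the left-hand one I would use the canonical comparison morphism $P^{(nD)}_X\to P^{((n-1)D)}_X$ between consecutive dilatations: if $a\in\widetilde{F}(X,(n-1)D)$ then \cref{thm:AS} lets one factor $\widetilde{\delta}(a)$ through $F(P^{((n-1)D)}_X)$, and since this morphism collapses the fibres of $E^{(nD)}$ to the diagonal locus, $s_a$ becomes constant, hence $0$; conversely, vanishing of $s_a$ on the special fibre allows one to descend $\widetilde{\delta}(a)$ along the blow-down to $P^{((n-1)D)}_X$, which by \cref{thm:AS} is precisely the statement $a\in\widetilde{F}(X,(n-1)D)$. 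The reduction of all of this to $D$-local assertions is controlled by \cref{thm:ZNP}.

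I expect the main obstacle to lie entirely in the geometry of the dilatations, and to be twofold. First is the precise identification of $E^{(nD)}$ with the total space of the bundle dual to $\Omega^1_X(nD)_{|D}$ together with the additivity of $s_a$; for a general non-linear reciprocity sheaf the additivity must be extracted from the cocycle relation above rather than from any $\Q/\Z$-duality, as in the classical character case. Second is the descent step in the comparison of $P^{(nD)}_X$ with $P^{((n-1)D)}_X$, where one must show that vanishing of the difference class on the special fibre is equivalent to extendability across the coarser dilatation; this is where the semi-purity and purity (\cref{thm:ReciprocityPurity}) of reciprocity sheaves enter decisively.
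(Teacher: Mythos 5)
Your construction of ${\rm char}^{(nD)}_F$ is essentially the right one, and it follows the route the paper indicates (the paper gives no proof of \cref{thm:AS2}; it only cites \cite[Thm 4.12]{RS-AS} and notes that the statement is obtained from \cref{thm:AS}): the unique lift $\widetilde{\delta}(a)\in F(P^{(nD)}_X)$ of $\delta(a)=p_1^*a-p_2^*a$ (unique because $P^{(nD)}_X$ is smooth with $U\times U$ dense, so $F(P^{(nD)}_X)\hookrightarrow F(U\times U)$ by \eqref{eq:Cousin}), the identification of the special fibre $E^{(nD)}=p_1^{-1}(D)$ with the total space of the bundle whose fibrewise linear functions are $\Omega^1_X(nD)_{|D}$ (locally $u_i=(x_i-x_i')/t^n$), additivity of $s_a$ via the dilatation of $X\times X\times X$ (over $D$ the three lifted projections become $\mathrm{pr}_1$, $\mathrm{pr}_2$, $+$), the identification of additive sections with $H^0\bigl(D,\Omega^1_X(nD)_{|D}\otimes_{\mathcal{O}_D}\underline{\Hom}_{{\rm Sh}_D}(\mathcal{O}_D,F_D)\bigr)$, and the vanishing $s_a=0$ for $a\in\widetilde{F}(X,(n-1)D)$ via the comparison map $\varphi\colon P^{(nD)}_X\to P^{((n-1)D)}_X$, $\varphi^*w_i=tu_i$, are all correct.

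The genuine gap is the converse, i.e. the injectivity, which is the actual content of the theorem. Your mechanism --- ``vanishing of $s_a$ on the special fibre allows one to descend $\widetilde{\delta}(a)$ along the blow-down to $P^{((n-1)D)}_X$'' --- cannot work as stated. First, $\varphi$ goes from the finer to the coarser dilatation; it is neither proper, nor flat, nor surjective, and its image is $(U\times U)\sqcup\{\text{zero section of }E^{((n-1)D)}\}$, since over $t=0$ one has $\varphi^*w_i=tu_i=0$. In particular the generic point $\xi$ of $E^{((n-1)D)}$ is not in the image. But $E^{((n-1)D)}$ is the \emph{only} boundary divisor of $U\times U$ in $P^{((n-1)D)}_X$ (the strict transforms having been removed), so by the Cousin resolution $\delta(a)$ extends to $P^{((n-1)D)}_X$ --- equivalently $a\in\widetilde{F}(X,(n-1)D)$ by \cref{thm:AS} --- if and only if it extends across $\xi$; the entire problem is concentrated at a point that no descent formalism along $\varphi$ can reach. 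Second, and decisively, the principle you invoke, ``$g\in F(P^{(nD)}_X)$ with $g_{|E^{(nD)}}=0$ implies $g_{|U\times U}$ extends to $P^{((n-1)D)}_X$'', is false already for $F=\G_a$: the section $g=t\cdot u_1^2=w_1^2/t$ vanishes on $E^{(nD)}$ but has a pole along $E^{((n-1)D)}$. So any correct proof of injectivity must use the specific structure of $\widetilde{\delta}(a)$ (its cocycle property, or, as in \cite{RS-AS}, the machinery of higher local symbols along Par\v{s}in chains that also underlies \cref{thm:AS}), not merely its vanishing on the special fibre; purity and semi-purity (\cref{thm:ReciprocityPurity}) alone do not bridge this, and this is precisely where the real work of \cite[Thm 4.12]{RS-AS} lies.
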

Some comments:
\begin{itemize}
\item If ${\rm char}(k)=p>0$, it follows that  the quotient on the left hand side is $p$-torsion.
This can be seen as a analogue of \cite[Cor 2.28]{TakeshiSaito} for reciprocity sheaves. 
\item The characteristic form  for 
\[F= H^1_{\et}(-, \Q/\Z)=\Hom_{\rm cont}(\pi_1^{\rm ab}(-), \Q/\Z)\]
factors via 
\[H^0(D,\Omega^1(nD)_{|D})\to H^0\left(D, \Omega^1_X(nD)_{|D}\otimes_{\mathcal{O}_D} \underline{\Hom}_{{\rm Sh}_D}(\mathcal{O}_D, H^1_{\et,D})\right)\]
induced by the natural map $\mathcal{O}_D\to H^1_{\et,D}$ stemming from the Artin-Schreier sequence, 
and yields a global version of the characteristic form of
Matsuda-Yatagawa (which is a non-log version of the refined Swan conductor of Kato), see \cite[Sec. 5]{RS-AS} for details.
\item The characteristic form for differential forms is computed in \cite[Thm 6.6, 6.8]{RS-AS}. 
These computations are also used to prove the formula in Theorem \ref{thm:Omega} in positive characteristic.
\item It is an intriguing problem  to give a general (motivic) description of the image of  ${\rm char}^{(nD)}_F$.
For example, the images  of the characteristic forms  of differential forms and Witt vectors of finite length  
in positive characteristic are rather complicated and do not give a direct hint towards a general formula.
\end{itemize}

Finally, we give an exemplary application of how  a local form of Theorem \ref{thm:AS2}  
reveals an interesting structure of Chow groups of zero-cycles with modulus over local fields of equicharacteristic.

\begin{example}
Let $Y$ be a proper $k$-scheme with an effective Cartier divisor $E$, such that $V=Y\setminus E$ is smooth. 
By \cite[Corollary 2.3.5]{KSY-RecII} and \cite[Theorem 0.1]{Sai}  the Nisnevich sheafification 
of $\underline{\omega}_!h_0^{\boks}(Y,E)$ (see Definition \ref{def:ModulusCondition}) 
is a reciprocity sheaf $h_0(Y,E)_{\Nis}$. For a field $K$ over $k$ we have
\[h_0(Y,E)_{\Nis}(K)=CH_0(Y_K,E_K),\]
where the right hand side denotes the Chow group of zero-cycles with modulus and $Y_K=Y\otimes_k K$.

Assume $L$ is a henselian discrete valuation field of geometric type over $k$ with ring of integers $\mathcal{O}_L$, 
maximal ideal $\mathfrak{m}_L$, and residue field $K=\mathcal{O}_L/\mathfrak{m}_L$. 
For simplicity assume  the transcendence degree of $L/k$ is $1$, so that all 
geometric models of $(\mathcal{O}_L, \mathfrak{m}_L)$ have a projective SNC-compactification and 
$\Omega^1_{\mathcal{O}_L/k}\otimes_{\mathcal{O}_L} K\cong K$.
Then ${\rm fil}_n := h_0(Y, E)_{\Nis}(S, n s)$, where $S=\Spec \mathcal{O}_L$ and $s\in S$ is the closed point, defines a filtration
\[{\rm fil}_0 \subset {\rm fil}_1 \subset\ldots\subset {\rm fil}_n \subset \ldots\subset CH_0(Y_L, E_L),\]
where ${\rm fil}_0$ is the subgroup of $CH_0(Y_L, E_L)$ generated by closed points in $V_L$ whose closure in $V\times_k S$ is finite over $S$. 
By  Theorem \ref{thm:AS2} we have an injection (which depends on the choice of a local parameter\footnote{More precisely, we use the isomorphism
$\mathfrak{m}^{-n}\Omega^1_{\mathcal{O}_L}\otimes_{\mathcal{O}_L} K= \mathfrak{m^{-n}}/\mathfrak{m^{-n+1}}\cong K$})
\[{\rm fil}_n /{\rm fil}_{n-1} \hookrightarrow
\underline{\Hom}_{{\rm Sh}_K}(\mathcal{O}_K, h_0(Y,E)_{\Nis})(K) \qquad (n\ge 2).\]
However the internal hom on the right is not well understood
and it would be interesting to describe the image of the map.
Evaluating at 1 induces a canonical map
\[{\rm fil}_n/{\rm fil}_{n-1}\to  CH_0(Y_K,E_K)\qquad (n\ge 2).\]
This is a new map and it is tempting to view it as a specialization map (depending on the choice of a local parameter). 
It remains to study its properties  more closely,  e.g., if it happens to be injective for certain pairs $(Y,E)$.
\end{example}

\printbibliography

\noindent\textsc{Fakult\"{a}t Mathematik und Naturwissenschaften, Bergische Universit\"{a}t Wuppertal, Gau{\ss}stra{\ss}e 20, 42119 Wuppertal, Germany}\\
\textit{Email adress: }
\texttt{ruelling@uni-wuppertal.de} \\
\textit{URL: } \url{http://www2.math.uni-wuppertal.de/~ruelling/}

\vspace{5mm}

\noindent\textsc{Department of Mathematics, University of Oslo, Moltke Moes vei 35, 0851 Oslo, Norway}\\
\textit{Email adress: }
\texttt{ntmarti@math.uio.no} \\
\textit{URL: } \url{http://nikolaiopdan.com}

\end{document}